\def\@makefntext{\leavevmode\llap{\@makefnmark\kern 3pt}}
\let\citationorig\citation
\def\citation#1{\citationorig{#1}\@for\@tempa:=#1\do{\@ifundefined{cit@\@tempa}{\global\@namedef{cit@\@tempa}{}}{}}}
\let\bibitemorig\bibitem
\def\bibitem#1{\@ifundefined{cit@#1}{\typeout{LaTeX Warning: Unused bibitem `#1'}}{}\bibitemorig{#1}}
\let\old@setaddresses\@setaddresses
\def\@setaddresses{\bigskip{\parindent 0pt\let\scshape\relax\let\ttfamily\relax\old@setaddresses}}
\def\periodsf{\spacefactor 3000 \space}
\newtheorem{theorem}{Theorem}
\newtheorem{lemma}[theorem]{Lemma}
\newtheorem*{explicit}{\explicittitle}
\newenvironment{theorem*}[1]{\begingroup\def\explicittitle{#1}\begin{explicit}}{\end{explicit}\endgroup}
\let\explicittitle\undefined
\renewenvironment{enumerate}{\begin{enumorig}[label=\textup{(\arabic*)}, noitemsep, topsep=3pt plus 3pt, leftmargin=*]}{\end{enumorig}}
\renewenvironment{itemize}{\begin{itemorig}[label=\textbullet, noitemsep, topsep=3pt plus 3pt, labelsep=.6em, labelindent=.2em, leftmargin=*]}{\end{itemorig}}
\let\int\undefined
\DeclareMathOperator{\int}{int}
\DeclareMathOperator{\ext}{ext}
\def\famC{\mathcal{C}}
\def\famF{\mathcal{F}}
\def\famG{\mathcal{G}}
\def\famH{\mathcal{H}}
\def\famP{\mathcal{P}}
\def\famS{\mathcal{S}}
\def\famT{\mathcal{T}}
\def\famU{\mathcal{U}}
\def\famX{\mathcal{X}}
\def\famY{\mathcal{Y}}
\def\setN{\mathbb{N}}
\def\setR{\mathbb{R}}
\let\leq\leqslant
\let\geq\geqslant
\let\setminus\smallsetminus
\let\Omega\varOmega
\let\Theta\varTheta
\title{Coloring curves that cross a fixed curve}
\author{Alexandre Rok\and Bartosz Walczak}
\address[Alexandre Rok]{Department of Mathematics, Ben-Gurion University of the Negev, Be'er Sheva 84105, Israel}
\email{\href{mailto:rok@math.bgu.ac.il}{rok@math.bgu.ac.il}}
\address[Bartosz Walczak]{Department of Theoretical Computer Science, Faculty of Mathematics and Computer Science, Jagiellonian University, Kraków, Poland}
\email{\href{mailto:walczak@tcs.uj.edu.pl}{walczak@tcs.uj.edu.pl}}
\thanks{A preliminary version of this paper appeared in: \href{http://doi.org/10.4230/LIPIcs.SoCG.2017.56}{Boris Aronov and Matthew~J. Katz (eds.), \emph{33rd International Symposium on Computational Geometry (SoCG 2017)}, vol.~77 of \emph{Leibniz International Proceedings in Informatics (LIPIcs)}, no.~56, pp.~1--15, Leibniz-Zentrum für Informatik, Dagstuhl, 2017}.}
\thanks{Alexandre Rok was partially supported by Israel Science Foundation grant 1136/12.
Bartosz Walczak was partially supported by National Science Center of Poland grant 2015/17/D/ST1/00585.}
\begin{document}

\begin{abstract}
We prove that for every integer $t\geq 1$, the class of intersection graphs of curves in the plane each of which crosses a fixed curve in at least one and at most $t$ points is $\chi$-bounded.
This is essentially the strongest $\chi$-boundedness result one can get for this kind of graph classes.
As a corollary, we prove that for any fixed integers $k\geq 2$ and $t\geq 1$, every $k$-quasi-planar topological graph on $n$ vertices with any two edges crossing at most $t$ times has $O(n\log n)$ edges.
\end{abstract}

\maketitle

\section{Introduction}

\subsection*{Overview}

A \emph{curve} is a homeomorphic image of the real interval $[0,1]$ in the plane.
The \emph{intersection graph} of a family of curves has these curves as vertices and the intersecting pairs of curves as edges.
Combinatorial and algorithmic aspects of intersection graphs of curves, known as \emph{string graphs}, have been attracting researchers for decades.
A significant part of this research has been devoted to understanding classes of string graphs that are \emph{$\chi$-bounded}, which means that every graph $G$ in the class satisfies $\chi(G)\leq f(\omega(G))$ for some function $f\colon\setN\to\setN$.
Here, $\chi(G)$ and $\omega(G)$ denote the chromatic number and the clique number (the maximum size of a clique) of $G$, respectively.
Recently, Pawlik et~al.\ \cite{PKK+13,PKK+14} proved that the class of all string graphs is not $\chi$-bounded.
However, all known constructions of string graphs with small clique number and large chromatic number require a lot of freedom in placing curves around in the plane.

What restrictions on placement of curves lead to $\chi$-bounded classes of intersection graphs?
McGuinness \cite{McG96,McG00} proposed studying families of curves that cross a fixed curve \emph{exactly once}.
This initiated a series of results culminating in the proof that the class of intersection graphs of such families is indeed $\chi$-bounded \cite{RW}.
By contrast, the class of intersection graphs of curves each crossing a fixed curve \emph{at least once} is equal to the class of all string graphs and therefore is not $\chi$-bounded.
We prove an essentially farthest possible generalization of the former result, allowing curves to cross the fixed curve \emph{at least once and at most\/ $t$ times}, for any bound $t$.

\begin{theorem}
\label{thm:curves}
For every integer\/ $t\geq 1$, the class of intersection graphs of curves each crossing a fixed curve in at least one and at most\/ $t$ points is\/ $\chi$-bounded.
\end{theorem}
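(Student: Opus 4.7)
I would proceed by induction on $t$, taking as the base case $t=1$ the result of~\cite{RW} cited above, which supplies a $\chi$-bounding function for families of curves each meeting the fixed curve in exactly one point. For the inductive step, I would first reduce to the case where every curve of the given family $\famF$ crosses the fixed curve $c_0$ in exactly $t$ points, by partitioning $\famF$ according to the number of crossings with $c_0$; a partition into $t$ parts costs only a bounded factor in $\chi$-boundedness.

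For such a family, I would cut each curve $\gamma\in\famF$ at its first crossing with $c_0$ into two sub-arcs $\gamma^-$ and $\gamma^+$. After a small extension past $c_0$, the sub-arc $\gamma^-$ becomes a curve meeting $c_0$ exactly once, so the base case applies to $\{\gamma^-\colon\gamma\in\famF\}$; and $\gamma^+$ becomes a curve meeting $c_0$ in at most $t-1$ additional points (plus its new endpoint on $c_0$), so the inductive hypothesis applies to $\{\gamma^+\colon\gamma\in\famF\}$. The intersection graph of $\famF$ is contained in the union of the intersection graphs of $\{\gamma^-\}$ and $\{\gamma^+\}$, together with a mixed graph whose edges are those pairs $\gamma\delta$ where $\gamma^-$ meets $\delta^+$ or vice versa.

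The first two of these graphs are $\chi$-bounded by the base case and the induction, respectively. The main obstacle is controlling the mixed graph: its clique number need not be bounded by $\omega(\famF)$, since a large clique there demands matched cross-intersections that do not follow from a clique in the original family. To handle it, I anticipate needing a strengthened inductive statement that controls bipartite interactions between two labeled families of curves sharing $c_0$, with the clique number measured simultaneously against both labels. An alternative would be an on-line coloring argument in the spirit of McGuinness, sweeping curves in the order of their first crossings with $c_0$ and bounding the chromatic number of each curve's back-neighborhood by invoking the base case and the induction. Either way, the bulk of the technical work should concentrate on this step, and I expect that the final $\chi$-bounding function grows rapidly (e.g.\ like a tower) as $t$ increases.
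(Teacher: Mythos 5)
There is a genuine gap, and it is exactly where you place the ``bulk of the technical work'': your proposal stops at the point where the paper's real contribution begins. Your outer scheme (reduce to the baseline-type setting, peel off crossings, and observe that after the inductively colored pieces are accounted for the only surviving edges are ``mixed'' ones, between an initial piece of one curve and a terminal piece of another) is close in spirit to the paper's initial reduction: the paper unfolds $c_0$ into a baseline, turns $\famF$ into a family of $2t$-curves, splits each curve into two overlapping subcurves, and refines by the two inductively obtained colorings, so that within each color class all remaining intersections are of $LR$-type (Lemma~\ref{lem:2t-curves}). But at that point one still has to prove that such families of mixed interactions are $\chi$-bounded, and this is the content of Theorem~\ref{thm:even-curves}, whose proof occupies most of the paper (the reductions to $\xi$- and $(\xi,h)$-families, skeletons, chains, and crucially the Chudnovsky--Scott--Seymour theorem, Theorem~\ref{thm:CSS}). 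Your proposal only names two candidate strategies for this step --- a strengthened ``bipartite'' induction, or a McGuinness-style sweep --- without carrying either out, so the central claim remains unproved.

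Moreover, there are concrete reasons why neither of your suggested shortcuts can work as stated. Note first that the clique number of your mixed graph is in fact at most $\omega(\famF)$ (an edge $\gamma\delta$ there forces $\gamma$ and $\delta$ to intersect), so the difficulty is not the clique number but $\chi$-boundedness itself: Theorem~\ref{thm:construction} exhibits triangle-free $LR$-families of \emph{double-curves} (two disjoint $1$-curves, no connecting middle part) with arbitrarily large chromatic number, so any argument that treats the cross-interaction pattern abstractly, forgetting that the two pieces of each curve are joined through the other side of $c_0$, is doomed; the paper's proof must and does exploit this connectedness (e.g.\ via the planarity/Jordan-curve arguments around $M(c)$). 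Likewise, the Gyárfás/McGuinness-type levelling argument that works for $1$-curves is explicitly noted in the paper to fail for $2$-curves, which is why the much heavier Theorem~\ref{thm:CSS} is invoked. So while your decomposition correctly isolates the obstacle, the proof of the key statement about it is missing, and the routes you sketch for it would need essentially the machinery the paper develops.
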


Additional motivation for Theorem~\ref{thm:curves} comes from its application to bounding the number of edges in so-called $k$-quasi-planar graphs, which we discuss at the end of this introduction.

\subsection*{Context}

Colorings of intersection graphs of geometric objects have been investigated since the 1960s, when Asplund and Grünbaum \cite{AG60} proved that intersection graphs of axis-parallel rectangles in the plane satisfy $\chi=O(\omega^2)$ and conjectured that the class of intersection graphs of axis-parallel boxes in $\setR^d$ is $\chi$-bounded for every integer $d\geq 1$.
A few years later Burling \cite{Bur65} discovered a surprising construction of triangle-free intersection graphs of axis-parallel boxes in $\setR^3$ with arbitrarily large chromatic number.
Since then, the upper bound of $O(\omega^2)$ and the trivial lower bound of $\Omega(\omega)$ on the maximum possible chromatic number of a rectangle intersection graph have been improved only in terms of multiplicative constants \cite{Hen98,Kos04}.

Another classical example of a $\chi$-bounded class of geometric intersection graphs is provided by circle graphs---intersection graphs of chords of a fixed circle.
Gyárfás \cite{Gya85} proved that circle graphs satisfy $\chi=O(\omega^24^\omega)$.
The best known upper and lower bounds on the maximum possible chromatic number of a circle graph are $O(2^\omega)$ \cite{KK97} and $\Omega(\omega\log\omega)$ \cite{Kos88,Kos04}.

McGuinness \cite{McG96,McG00} proposed investigating the problem in a setting that allows much more general geometric shapes but restricts the way how they are arranged in the plane.
In \cite{McG96}, he proved that the class of intersection graphs of L-shapes crossing a fixed horizontal line is $\chi$-bounded.
Families of L-shapes in the plane are \emph{simple}, which means that any two members of the family intersect in at most one point.
McGuinness \cite{McG00} also showed that triangle-free intersection graphs of simple families of curves each crossing a fixed line in exactly one point have bounded chromatic number.
Further progress in this direction was made by Suk \cite{Suk14}, who proved that simple families of $x$-monotone curves crossing a fixed vertical line give rise to a $\chi$-bounded class of intersection graphs, and by Lasoń et~al.\ \cite{LMPW14}, who reached the same conclusion without assuming that the curves are $x$-monotone.
Finally, in \cite{RW}, we proved that the class of intersection graphs of curves each crossing a fixed line in exactly one point is $\chi$-bounded.
These results remain valid if the fixed straight line is replaced by a fixed curve \cite{SW15}.

The class of string graphs is not $\chi$-bounded.
Pawlik et~al.\ \cite{PKK+13,PKK+14} showed that Burling's construction for boxes in $\setR^3$ can be adapted to provide a construction of triangle-free intersection graphs of straight-line segments (or geometric shapes of various other kinds) with chromatic number growing as fast as $\Theta(\log\log n)$ with the number of vertices $n$.
It was further generalized to a construction of string graphs with clique number $\omega$ and chromatic number $\Theta_\omega((\log\log n)^{\omega-1})$ \cite{KW}.
The best known upper bound on the chromatic number of string graphs in terms of the number of vertices is $\smash[t]{(\log n)^{O(\log\omega)}}$, proved by Fox and Pach \cite{FP14} using a separator theorem for string graphs due to Matoušek \cite{Mat14}.
For intersection graphs of segments and, more generally, $x$-monotone curves, upper bounds of the form $\chi=O_\omega(\log n)$ follow from the above-mentioned results in \cite{Suk14} and \cite{RW} via recursive halving.
Upper bounds of the form $\chi=O_\omega(\smash[t]{(\log\log n)^{f(\omega)}})$ (for some function $f\colon\setN\to\setN$) are known for very special classes of string graphs: rectangle overlap graphs \cite{KPW15,KW} and subtree overlap graphs \cite{KW}.
The former still allow the triangle-free construction with $\chi=\Theta(\log\log n)$ and the latter the construction with $\chi=\Theta_\omega((\log\log n)^{\omega-1})$.

\subsection*{Quasi-planarity}

A \emph{topological graph} is a graph with a fixed curvilinear drawing in the plane.
For $k\geq 2$, a \emph{$k$-quasi-planar graph} is a topological graph with no $k$ pairwise crossing edges.
In particular, a $2$-quasi-planar graph is just a planar graph.
It is conjectured that $k$-quasi-planar graphs with $n$ vertices have $O_k(n)$ edges \cite{BMP-book,PSS96}.
For $k=2$, this asserts a well-known property of planar graphs.
The conjecture is also verified for $k=3$ \cite{AAP+97,PRT06} and $k=4$ \cite{Ack09}, but it remains open for $k\geq 5$.
The best known upper bounds on the number of edges in a $k$-quasi-planar graph are $\smash[t]{n(\log n)^{O(\log k)}}$ in general \cite{FP12,FP14}, $O_k(n\log n)$ for the case of $x$-monotone edges \cite{Val97}, $O_k(n\log n)$ for the case that any two edges intersect at most once \cite{SW15}, and $\smash[t]{2^{\alpha(n)^\nu}}n\log n$ for the case that any two edges intersect in at most $t$ points, where $\alpha$ is the inverse Ackermann function and $\nu$ depends on $k$ and $t$ \cite{SW15}.
We apply Theorem~\ref{thm:curves} to improve the last bound to $O_{k,t}(n\log n)$.

\begin{theorem}
\label{thm:k-quasi-planar}
Every\/ $k$-quasi-planar topological graph\/ $G$ on\/ $n$ vertices such that any two edges of\/ $G$ intersect in at most\/ $t$ points has at most\/ $\mu_{k,t}n\log n$ edges, where\/ $\mu_{k,t}$ depends only on\/ $k$ and\/ $t$.
\end{theorem}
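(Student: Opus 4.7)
My plan is to prove Theorem~\ref{thm:k-quasi-planar} by a divide-and-conquer recurrence on the vertex count, following the template used in~\cite{SW15} for simple topological graphs and substituting Theorem~\ref{thm:curves} for the weaker $\chi$-boundedness input available there. Let $f(n)$ denote the maximum number of edges in a $k$-quasi-planar topological graph on $n$ vertices in which any two edges cross in at most $t$ points. The target is the recurrence
\begin{equation*}
f(n)\leq f(n_1)+f(n_2)+C_{k,t}\,n,\qquad n_1+n_2\leq n,\ \max(n_1,n_2)\leq \tfrac{2}{3}n,
\end{equation*}
which unwinds in the standard way to $f(n)=O_{k,t}(n\log n)$.

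In the recursive step I would construct a simple curve $\gamma$ in the plane, built from $s=s(k,t)$ edges of $G$ concatenated end-to-end and extended by two arcs running to infinity through the unbounded face of the drawing, with two requirements: (a)~the two open regions of $\setR^2\setminus\gamma$ each contain at most $\tfrac{2}{3}n$ vertices of $G$; and (b)~every edge of $G$ meets $\gamma$ in at most $T:=ts$ points. Property~(b) is automatic once $s$ is a constant, because any two edges of $G$ meet in at most $t$ points and the extensions to infinity can be routed through empty faces. Given $\gamma$, let $E_\gamma$ be the edges of $G$ crossing $\gamma$. A clique of size $s(k-1)$ in the intersection graph of $E_\gamma$ would, by pigeonhole, contain $k-1$ edges all crossing one fixed constituent edge of $\gamma$; together with that edge they form $k$ pairwise crossing edges of $G$, contradicting $k$-quasi-planarity. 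Hence the intersection graph of $E_\gamma$ has clique number less than $s(k-1)$, and since each $e\in E_\gamma$ meets the fixed curve $\gamma$ in between $1$ and $T$ points, Theorem~\ref{thm:curves} supplies a constant $L=\phi_T(s(k-1)-1)$ such that $E_\gamma$ is $L$-colorable, each color class consisting of pairwise disjoint curves. Each color class is therefore a planar subgraph of $G$ with at most $3n-6$ edges, giving $|E_\gamma|\leq 3Ln = C_{k,t}\,n$. The subgraphs induced on the two sides of $\gamma$ inherit both the $k$-quasi-planar property and the at-most-$t$-crossings property, so the induction hypothesis closes the recurrence.

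The main obstacle is the construction of the separator $\gamma$. Property~(b) forces $\gamma$ to be built from only $O_{k,t}(1)$ edges of $G$, since an arbitrary Jordan curve in the plane may cross a single edge of $G$ arbitrarily many times; property~(a) simultaneously demands a roughly balanced split of $V(G)$, which is difficult to achieve with so short a curve. A Lipton--Tarjan separator of size $\Theta(\sqrt n)$ in a maximum planar subgraph of $G$ would be balanced but far too long to keep $T$ bounded. The resolution---paralleling the corresponding step in~\cite{SW15}---is to perform a weighted halving inside a planar substructure of $G$ and then shortcut through the unbounded face of the drawing, at the cost only of inflating $s$ and $T$ to absolute constants depending on $k$ and $t$. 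Once such a $\gamma$ is in hand, Theorem~\ref{thm:curves} slots in directly at the crossing-edge-counting step and delivers the $O_{k,t}(n\log n)$ bound.
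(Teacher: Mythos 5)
The crux of your argument---the existence of the balanced separator $\gamma$---is exactly the step you do not prove, and it is not a step that can be borrowed from \cite{SW15}. The proof in \cite{SW15} for $t=1$ (and the paper's proof here, which follows it) does not perform any divide-and-conquer on the vertex set and constructs no separator at all: it applies the decomposition lemma of Fox, Pach and Suk \cite[Lemma 3.2]{FPS13}, which extracts from the family $\famF$ of (slightly shortened) edges subfamilies $\famF_1,\ldots,\famF_d$, pairwise completely disjoint, with $\lvert\famF_1\cup\cdots\cup\famF_d\rvert\geq\nu_t\lvert\famF\rvert/\log\lvert\famF\rvert$, such that each $\famF_i$ contains a curve $c_i$ intersecting all others in $\famF_i$. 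Theorem~\ref{thm:curves} is then applied with $c_0=c_i$ to $\famF_i\setminus\{c_i\}$ (clique number at most $k-2$), the color classes are planar and hence of size $O(n)$, and the inequality $\nu_t\lvert\famF\rvert/\log\lvert\famF\rvert=O_{k,t}(n)$ immediately gives $\lvert\famF\rvert=O_{k,t}(n\log n)$. So the ``weighted halving inside a planar substructure, shortcut through the unbounded face'' step you attribute to \cite{SW15} does not exist there, and you give no construction of your own.

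And the missing construction is a genuine obstacle, not a routine detail. Your $\gamma$ must be a concatenation of $O_{k,t}(1)$ edges of $G$ (so essentially a constant-length walk in $G$) extended by two arcs that cross no edges, yet it must leave at most $\tfrac23 n$ vertices on each side. In a drawing whose faces are small, arcs confined to faces cannot travel where balance requires, and there is no reason a bounded-length walk in $G$ should separate the vertex set in a balanced way; the known separator machinery for string graphs (Matoušek's theorem, used by Fox and Pach) produces separators of size roughly $\sqrt{m\log m}$ in the intersection graph, not a constant number of edges forming a simple curve, which is precisely why the literature replaces the separator-recursion template (available in the $x$-monotone case via vertical lines) by the Fox--Pach--Suk lemma. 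The downstream part of your argument---bounded clique number of $E_\gamma$ via pigeonhole, Theorem~\ref{thm:curves} applied to curves crossing the fixed curve $\gamma$ between $1$ and $T$ times, planar color classes of size $O(n)$, and the recurrence unwinding to $O_{k,t}(n\log n)$---is sound conditional on $\gamma$, but as it stands the proof has a hole exactly where all the difficulty lies.
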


\noindent
The proof follows the same lines as the proof in \cite{SW15} for the case $t=1$ (see Section~\ref{sec:k-quasi-planar}).

\section{Proof of Theorem~\ref{thm:curves}}

\subsection*{Setup}

We let $\setN$ denote the set of positive integers.
Graph-theoretic terms applied to a family of curves $\famF$ have the same meaning as applied to the intersection graph of $\famF$.
In particular, the \emph{chromatic number} of $\famF$, denoted by $\chi(\famF)$, is the minimum number of colors in a \emph{proper coloring} of $\famF$ (a coloring that distinguishes pairs of intersecting curves), and the \emph{clique number} of $\famF$, denoted by $\omega(\famF)$, is the maximum size of a \emph{clique} in $\famF$ (a set of pairwise intersecting curves in $\famF$).

\begin{theorem*}{Theorem~\ref{thm:curves}}[rephrased]
For every\/ $t\in\setN$, there is a non-decreasing function\/ $f_t\colon\setN\to\setN$ with the following property: for any fixed curve\/ $c_0$, every family\/ $\famF$ of curves each intersecting\/ $c_0$ in at least one and at most\/ $t$ points satisfies\/ $\chi(\famF)\leq f_t(\omega(\famF))$.
\end{theorem*}

We do not state any explicit bound on the function $f_t$ above, because it highly depends on the bound on the function $f$ in Theorem~\ref{thm:CSS} (one of our main tools), and no explicit bound on that function is provided in \cite{CSS}.
We assume (implicitly) that the intersection points of all curves $c\in\famF$ with $c_0$ considered in Theorem~\ref{thm:curves} are distinct and each of them is a \emph{proper crossing}, which means that $c$ passes from one to the other side of $c_0$ in a sufficiently small neighborhood of the intersection point.
This assumption is without loss of generality, as it can be guaranteed by appropriate small perturbations of the curves that do not influence the intersection graph.

\subsection*{Initial reduction}

We start by reducing Theorem~\ref{thm:curves} to a somewhat simpler and more convenient setting.
We fix a horizontal line in the plane and call it the \emph{baseline}.
The upper closed half-plane determined by the baseline is denoted by $H^+$.
A \emph{$1$-curve} is a curve in $H^+$ that has one endpoint (called the \emph{basepoint} of the $1$-curve) on the baseline and does not intersect the baseline in any other point.
Intersection graphs of $1$-curves are known as \emph{outerstring graphs} and form a $\chi$-bounded class of graphs---this result, due to the authors, is the starting point of the proof of Theorem~\ref{thm:curves}.

\begin{theorem}[\cite{RW}]
\label{thm:outerstring}
There is a non-decreasing function\/ $f_0\colon\setN\to\setN$ such that every family\/ $\famF$ of\/ $1$-curves satisfies\/ $\chi(\famF)\leq f_0(\omega(\famF))$.
\end{theorem}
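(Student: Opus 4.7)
The plan is to prove Theorem~\ref{thm:outerstring} by induction on the clique number $\omega=\omega(\famF)$, combined with a McGuinness-style sweeping coloring along the baseline. The base case $\omega=1$ is immediate, since an independent set has chromatic number $1$. For the inductive step, I fix $\omega\geq 2$ and assume that $f_0(\omega-1)$ has already been constructed.

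The crux is a local $\chi$-boundedness lemma: for every single $1$-curve $c\in\famF$, the subfamily $\famF_c$ of $1$-curves that cross $c$ satisfies $\chi(\famF_c)\leq g(\omega-1)$ for some non-decreasing $g$. The bound $\omega(\famF_c)\leq\omega-1$ is automatic, since any clique in $\famF_c$ extends to a clique in $\famF$ by adjoining $c$. To invoke the inductive hypothesis, I would treat $c$ itself as a new baseline: each $1$-curve $c'\in\famF_c$ decomposes at its crossings with $c$ into sub-arcs, suitable ones of which become $1$-curves relative to a reparametrization of $c$. Proper colorings of the resulting sub-families should then combine, with only a bounded multiplicative overhead, into a proper coloring of $\famF_c$.

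With the local bound in hand, I would order $\famF$ left-to-right by basepoints and run a greedy sweep, assigning each $1$-curve the least color not used by any already-processed neighbor. The size of the palette is bounded by $1$ plus the maximum, over $c\in\famF$, of the number of distinct colors appearing on the already-colored neighbors of $c$, which in principle is controlled by $\chi(\famF_c)\leq g(\omega-1)$.

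The main obstacle is that a previously-processed $1$-curve $c'$ whose basepoint lies far to the left of $c$'s basepoint can still intersect $c$ by ``reaching over'' many intermediate curves, so the color carried by $c'$ may not be tied to the sub-arc of $c'$ nearest $c$. Controlling these long-range interactions is, I expect, the technical heart of \cite{RW}, and would likely be handled either by a Ramsey-type extraction of a monotone sub-family, or by a dyadic partition of the baseline into blocks on each of which the induction runs independently, together with a potential function preventing colors from being over-counted across blocks. This is precisely the step that distinguishes Theorem~\ref{thm:outerstring} from the substantially easier case of $x$-monotone $1$-curves handled by Suk~\cite{Suk14}.
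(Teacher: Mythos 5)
Theorem~\ref{thm:outerstring} is not proved in this paper at all: it is imported from \cite{RW}, where its proof is the main content of that entire paper, so your sketch has to be judged against that argument. Your ``local lemma'' is fine, but for a much simpler reason than the one you give: $\famF_c$ is itself a family of $1$-curves on the \emph{same} baseline with $\omega(\famF_c)\leq\omega-1$, so the induction hypothesis yields $\chi(\famF_c)\leq f_0(\omega-1)$ directly; no re-interpretation of $c$ as a new baseline and no recombination of sub-arcs is needed (and the recombination step you describe is itself unjustified).

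The genuine gap is the sweep. In a greedy left-to-right coloring, the number of distinct colors already present on the neighbors of $c$ is \emph{not} controlled by $\chi(\famF_c)$: that quantity bounds the chromatic number of the subgraph induced on the neighborhood, not the number of colors the global greedy procedure happens to have placed there. If your inference were valid it would not use the baseline order in any essential way, and it would show that every graph in which all neighborhoods have chromatic number at most $g$ is $(g+1)$-colorable; taking $g=1$ this would make every triangle-free string graph $2$-colorable, contradicting the constructions of \cite{PKK+13}, and Theorem~\ref{thm:construction} of the present paper likewise gives triangle-free $LR$-families of double-curves (so all neighborhoods independent) with unbounded chromatic number. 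So bounded clique number plus locally bounded chromatic number is very far from sufficient, and everything depends on structural work specific to grounded families that your proposal only names (``Ramsey-type extraction'', ``dyadic partition with a potential function'') without carrying out. In fact, the reduction you do carry out---induction on $\omega$ plus the observation that each neighborhood has chromatic number at most $f_0(\omega-1)$---is exactly the easy step this paper performs for $2$-curves (the passage from Lemma~\ref{lem:2-curves} to $\xi$-families via Lemma~\ref{lem:xi}); the entire difficulty, both in \cite{RW} and in the analogous Lemmas~\ref{lem:(xi,h)}--\ref{lem:chain} here, lies in proving that such locally $\chi$-bounded grounded families have bounded chromatic number, which is done with cap-curves, skeletons supporting subfamilies, and an induction that builds long chains forcing large cliques---not with a single greedy sweep. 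The missing step is therefore not a technicality to be patched but the whole content of the theorem.
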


An \emph{even-curve} is a curve that has both endpoints above the baseline and a positive even number of intersection points with the baseline, each of which is a proper crossing.
For $t\in\setN$, a \emph{$2t$-curve} is an even-curve that intersects the baseline in exactly $2t$ points.
A \emph{basepoint} of an even-curve $c$ is an intersection point of $c$ with the baseline.
Like above, we assume (implicitly, without loss of generality) that the basepoints of all even-curves in any family that we consider are distinct.
Every even-curve $c$ determines two $1$-curves---the two parts of $c$ from an endpoint to the closest basepoint along $c$.
They are called the \emph{$1$-curves of\/ $c$} and denoted by $L(c)$ and $R(c)$ so that the basepoint of $L(c)$ lies to the left of the basepoint of $R(c)$ on the baseline (see Figure~\ref{fig:even-curve}).
A family $\famF$ of even-curves is an \emph{$LR$-family} if every intersection between two curves $c_1,c_2\in\famF$ is an intersection between $L(c_1)$ and $R(c_2)$ or between $L(c_2)$ and $R(c_1)$.
The main effort in this paper goes to proving the following statement on $LR$-families of even-curves.

\begin{figure}[t]
\centering
\begin{tikzpicture}[scale=.9,yscale=.7]
  \draw[name path=b] (0,0)--(10.2,0);
  \path[name path=c] plot[smooth,tension=.72] coordinates {(2.8,2) (2.7,1.2) (2.4,-0.7) (3.8,-0.6) (4,2.5) (1.4,2.5) (1.2,-1.4) (4.1,-2) (5.8,1.1) (7.2,1) (7.9,-1.4) (9.2,-1) (8.8,2.2) (5.4,2.9)};
  \draw[thick,intersection segments={of=c and b,sequence=L1}];
  \draw[thick,dashed,intersection segments={of=c and b,sequence={L2--L3--L4--L5--L6}}];
  \draw[thick,intersection segments={of=c and b,sequence=L7}];
  \path[decorate,decoration={markings,mark=at position .6 with {\node[above left] {$L(c)$};}},intersection segments={of=c and b,sequence=L1}];
  \path[decorate,decoration={markings,mark=at position .5 with {\node[above] {$M(c)$};}},intersection segments={of=c and b,sequence=L4}];
  \path[decorate,decoration={markings,mark=at position .3 with {\node[below left] {$R(c)$};}},intersection segments={of=c and b,sequence=L7}];
  \draw[decorate,decoration={brace,mirror,amplitude=5pt,raise=3pt},name intersections={of=c and b}] (intersection-1)--(intersection-6) node[midway,below=8pt] {$I(c)$};
\end{tikzpicture}
\caption{$L(c)$, $R(c)$, $M(c)$ (all the dashed part), and $I(c)$ for a $6$-curve $c$}
\label{fig:even-curve}
\end{figure}
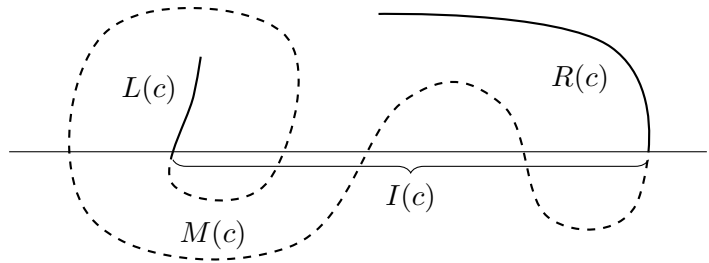

\begin{theorem}
\label{thm:even-curves}
There is a non-decreasing function\/ $f\colon\setN\to\setN$ such that every\/ $LR$-family\/ $\famF$ of even-curves satisfies\/ $\chi(\famF)\leq f(\omega(\famF))$.
\end{theorem}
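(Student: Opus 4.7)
My plan is to prove Theorem~\ref{thm:even-curves} by induction on $k = \omega(\famF)$. The base case $k = 1$ is immediate: $\famF$ has no edges and $\chi(\famF) = 1$. For the inductive step, fix an LR-family $\famF$ with $\omega(\famF) = k$ and assume the non-decreasing function $f$ has already been defined on $\{1,\dots,k-1\}$. Since the LR property is inherited by subfamilies, every neighborhood $N(c) = \{c' \in \famF \setminus \{c\} : c' \cap c \neq \emptyset\}$ is itself an LR-family, and $\omega(N(c)) \leq k - 1$ because appending $c$ to any clique in $N(c)$ yields a clique in $\famF$. Hence by the induction hypothesis $\chi(N(c)) \leq f(k-1)$ for every $c \in \famF$, and the task is to promote this local bound to a global bound $\chi(\famF) \leq f(k)$.

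Two structural features should drive the argument. First, the LR condition forces $\mathcal{L} = \{L(c) : c \in \famF\}$ and $\mathcal{R} = \{R(c) : c \in \famF\}$ to be independent families of $1$-curves, so their basepoints give a rigid linear skeleton on which to set up a sweep. Second, for a fixed reference $1$-curve, any subfamily of $\mathcal{L} \cup \mathcal{R}$ crossing that reference forms an outerstring configuration, making Theorem~\ref{thm:outerstring} applicable. I would run a sweep that processes $\famF$ in the order of the basepoints of $R(c)$ along the baseline, at each step partitioning the already-processed neighbors of a new curve $c$ according to whether the intersection is witnessed by an earlier $L(c')$ crossing $R(c)$ or by $L(c)$ crossing an earlier $R(c')$, and bounding the chromatic number of each part by applying Theorem~\ref{thm:outerstring} to a carefully constructed auxiliary family of $1$-curves anchored at $L(c)$ or $R(c)$. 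The pointwise outerstring colorings are then glued together using Theorem~\ref{thm:CSS}, with the inductive bound $f(k-1)$ on neighborhoods supplied as its local input; this is the step that packages the construction into a single function $f(k)$ depending only on $k$.

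I expect the hardest step to be the design of the auxiliary $1$-curve families on which Theorem~\ref{thm:outerstring} is applied. A naive choice such as $\{R(c)\} \cup \{L(c') : L(c') \cap R(c) \neq \emptyset\}$ has a trivial (star) intersection graph and conveys no useful chromatic information; one instead needs auxiliary families that encode the \emph{interactions among the neighbors of $c$}, not merely their interactions with $c$, while keeping the clique number controlled by a function of $k$ and respecting the LR rigidity. A related challenge is the gluing step: even after the sweep at a single curve is successfully colored, stitching these local colorings into a globally consistent proper coloring of $\famF$ requires the graph-theoretic machinery of Theorem~\ref{thm:CSS}, and I anticipate that the precise setup of its hypotheses will require nontrivial preparatory lemmas on the combinatorics of LR-families (for instance, extracting large subfamilies on which the $L$- and $R$-basepoints are nicely interleaved).
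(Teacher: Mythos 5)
Your opening reduction is sound and matches the paper's first move: inducting on $k=\omega(\famF)$ and using the fact that every neighborhood is an $LR$-family of clique number at most $k-1$ is exactly how the paper passes to what it calls $\xi$-families (Lemma~\ref{lem:xi}), after first reducing even-curves to $2$-curves. But from that point on your proposal is a statement of intentions rather than an argument, and you say so yourself: the ``design of the auxiliary $1$-curve families'' and the ``gluing step'' are precisely where the entire difficulty lives, and they are left unresolved. In the paper these correspond to a long chain of genuinely new lemmas (Lemmas~\ref{lem:nested}, \ref{lem:between}, \ref{lem:supports}, \ref{lem:sideways}, \ref{lem:skeleton}, \ref{lem:chain}): one extracts, via Theorem~\ref{thm:CSS}, a cap-curve $\gamma$ and a subfamily of large chromatic number inside $\int\gamma$ supported by curves reaching $\ext\gamma$, builds skeletons from pieces of $L$- or $R$-curves cut at $\gamma$, iterates this to get supports ``from outside,'' and finally grows chains that force a large clique. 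None of this is a sweep by $R$-basepoints, and Theorem~\ref{thm:CSS} is not used to glue local colorings --- it is used once, to find a single curve whose distance-$2$ ball has large chromatic number.

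There is also a concrete reason your sketch, as stated, cannot be completed: nothing in it uses the middle parts $M(c)$ of the even-curves. The quantities you propose to work with --- the disjoint families $L(\famF)$ and $R(\famF)$, neighborhoods, $R$-basepoint order, outerstring subconfigurations of $L(\famF)\cup R(\famF)$ --- all make sense verbatim for the double-curves of Theorem~\ref{thm:construction}, where $L(X)$ and $R(X)$ are not connected to each other. Since triangle-free $LR$-families of double-curves have unbounded chromatic number, any argument that is blind to $M(c)$ must fail; note also that $\chi(L(\famF)\cup R(\famF))\leq 2$ in any $LR$-family, so Theorem~\ref{thm:outerstring} applied to such auxiliary families carries no information by itself. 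The paper exploits connectedness exactly where you would need it: in Lemma~\ref{lem:planar} and the Jordan-curve argument showing the intervals $I(c)$ are nested or disjoint within a component (which powers the reduction to $2$-curves), and again in the skeleton and chain lemmas, where $M(c^\star)$ or the curve $c^\star$ itself separates the half-plane and forces further intersections. You would need to identify where your sweep uses this connectedness; as written it does not, so the gap is not a technicality but the heart of the proof.
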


Theorem~\ref{thm:even-curves} makes no assumption on the maximum number of intersection points of an even-curve with the baseline.
We derive Theorem~\ref{thm:curves} from Theorem~\ref{thm:even-curves} in two steps, first proving the following lemma, and then showing that Theorem~\ref{thm:curves} is essentially a special case of it.

\begin{lemma}
\label{lem:2t-curves}
For every\/ $t\in\setN$, there is a non-decreasing function\/ $f_t\colon\setN\to\setN$ such that every family\/ $\famF$ of\/ $2t$-curves no two of which intersect below the baseline satisfies\/ $\chi(\famF)\leq f_t(\omega(\famF))$.
\end{lemma}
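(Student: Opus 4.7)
I proceed by induction on $t$, with base case $t=1$ and the inductive step reducing $2t$-curves to $(2t-2)$-curves.

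For the base case $t=1$, apply Theorem~\ref{thm:outerstring} separately to the $1$-curve families $\{L(c):c\in\famF\}$ and $\{R(c):c\in\famF\}$, each of clique number at most $\omega(\famF)$. This produces two proper colorings with at most $f_0(\omega(\famF))$ colors each, whose product partitions $\famF$ into at most $f_0(\omega(\famF))^2$ sub-families in which no two curves share crossing $L$-arcs and no two share crossing $R$-arcs. Combined with the hypothesis of no below-baseline crossings, every residual intersection within such a sub-family must be of the form $L(c_1)\cap R(c_2)$, so each sub-family is an $LR$-family, and Theorem~\ref{thm:even-curves} gives a further proper coloring with $f(\omega(\famF))$ colors. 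This yields $f_1(\omega):=f_0(\omega)^2 f(\omega)$.

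For the inductive step $t\geq 2$, assume the lemma for $t-1$. The idea is to eliminate one below-baseline dip per curve, converting each $2t$-curve into a $(2t-2)$-curve. For each $c\in\famF$, select a distinguished dip $B_{i(c)}(c)$---an innermost one in the non-crossing nesting structure formed by the pairwise disjoint below-baseline arcs of $\famF$---and replace it with a short above-baseline arc drawn in a thin tube just above the baseline, obtaining a $(2t-2)$-curve $c^\flat$. Because the below-baseline arcs of $\famF$ are pairwise disjoint, the thin tubes can be arranged so that $\famF^\flat:=\{c^\flat:c\in\famF\}$ still has no below-baseline intersections; the inductive hypothesis then gives $\chi(\famF^\flat)\leq f_{t-1}(\omega(\famF))$. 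Intersections of $\famF$ not faithfully represented in $\famF^\flat$ are those meeting the eliminated dips; these are handled by additional applications of Theorem~\ref{thm:outerstring} to the auxiliary $1$-curves flanking each eliminated dip, followed by Theorem~\ref{thm:even-curves} on the remaining $LR$-type structure.

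The principal obstacle lies in the flattening step: an innermost dip of $c$ may enclose basepoints of other curves whose own dips are nested \emph{inside} (and therefore share the innermost status) or lie \emph{outside} it, so naive upward flattening may spuriously cross the above-baseline parts of those curves. Handling this will likely require a more global partitioning of $\famF$---perhaps stratifying curves by the nesting depth of their dip structure, or processing dips one at a time rather than one per curve---to ensure that the intersection graph of $\famF$ is faithfully captured by $\famF^\flat$ together with the auxiliary colorings. This is where I expect the bulk of the technical effort of the proof to concentrate.
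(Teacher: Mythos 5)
Your base case is exactly the paper's argument (for $t=1$ the paper's two auxiliary families are precisely $L(\famF)$ and $R(\famF)$), and your overall skeleton---induct on $t$, superimpose two colorings, and finish by applying Theorem~\ref{thm:even-curves} to the color classes, which are $LR$-families---is also the paper's. The gap is the inductive step. Your reduction re-draws each curve: replacing a dip by an arc running in a thin tube above the baseline necessarily crosses every curve of $\famF$ that has a basepoint strictly inside the flattened dip's baseline interval, and such basepoints cannot be avoided in general, because a curve need not possess any dip that is innermost with respect to the whole family (all of its dips may contain dips of other curves), so no choice of the index $i(c)$ makes the flattening faithful. These spurious crossings are not a mere inconvenience: they can increase the clique number of $\famF^\flat$, so the inductive bound $f_{t-1}(\omega(\famF^\flat))$ is not a bound in terms of $\omega(\famF)$, and the proposed repair (``additional applications of Theorem~\ref{thm:outerstring} to the auxiliary $1$-curves flanking each eliminated dip'') is not specified to the point where it could be checked. (Note also that nothing is \emph{lost} at the eliminated dips---no two curves of $\famF$ meet below the baseline---so the only issue is what is gained.) You yourself flag this as where the bulk of the work would lie, but that work is precisely what is missing.

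The paper avoids surgery altogether. For each $2t$-curve $c$, with $p_0(c),\dots,p_{2t+1}(c)$ its endpoints and basepoints in order along $c$, it places the subcurve from $p_0(c)$ to $p_{2t-1}(c)$ into $\famF_1$ and the subcurve from $p_2(c)$ to $p_{2t+1}(c)$ into $\famF_2$; after a harmless shortening at the baseline endpoint these are families of $2(t-1)$-curves (or of $1$-curves when $t=1$), and since parts of curves are only deleted, no intersections are created and no clique number grows. The induction hypothesis (or Theorem~\ref{thm:outerstring}) colors each $\famF_k$ with at most $f_{t-1}(\omega(\famF))$ colors. On a subfamily where both colors are constant, an intersection point of $c_1$ and $c_2$ cannot lie on both $\famF_1$-subcurves nor on both $\famF_2$-subcurves, and it lies above the baseline, so it is an intersection of $R(c_1)$ with $L(c_2)$ or of $R(c_2)$ with $L(c_1)$; hence each class is an $LR$-family of even-curves, Theorem~\ref{thm:even-curves} applies, and $f_t=f_{t-1}^2\cdot f$ works. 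If you wish to rescue the flattening idea you must supply the global partitioning you allude to; as written, the inductive step does not go through.
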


\begin{proof}[Proof of Lemma~\ref{lem:2t-curves} from Theorem~\ref{thm:even-curves}]
The proof goes by induction on $t$.
Let $f_0$ and $f$ be the functions claimed by Theorem~\ref{thm:outerstring} and Theorem~\ref{thm:even-curves}, respectively, and let $f_t(k)=f_{t-1}^2(k)f(k)$ for $t\geq 1$ and $k\in\setN$.
We establish the base case for $t=1$ and the induction step for $t\geq 2$ simultaneously.
Namely, fix an integer $t\geq 1$, and let $\famF$ be as in the statement of the lemma.
For every $2t$-curve $c\in\famF$, enumerate the endpoints and basepoints of $c$ as $p_0(c),\ldots,p_{2t+1}(c)$ in their order along $c$ so that $p_0(c)$ and $p_1(c)$ are the endpoints of $L(c)$ while $p_{2t}(c)$ and $p_{2t+1}(c)$ are the endpoints of $R(c)$.
Build two families of curves $\famF_1$ and $\famF_2$ putting the part of $c$ from $p_0(c)$ to $p_{2t-1}(c)$ to $\famF_1$ and the part of $c$ from $p_2(c)$ to $p_{2t+1}(c)$ to $\famF_2$ for every $c\in\famF$.
If $t=1$, then $\famF_1$ and $\famF_2$ are families of $1$-curves.
If $t\geq 2$, then $\famF_1$ and $\famF_2$ are equivalent to families of $2(t-1)$-curves, because the curve in $\famF_1$ or $\famF_2$ obtained from a $2t$-curve $c\in\famF$ can be shortened a little at $p_{2t-1}(c)$ or $p_2(c)$, respectively, losing that basepoint but no intersection points with other curves.
Therefore, by Theorem~\ref{thm:outerstring} or the induction hypothesis, we have $\chi(\famF_k)\leq f_{t-1}(\omega(\famF_k))\leq f_{t-1}(\omega(\famF))$ for $k\in\{1,2\}$.
For $c\in\famF$ and $k\in\{1,2\}$, let $\phi_k(c)$ be the color of the curve obtained from $c$ in an optimal proper coloring of $\famF_k$.
Every subfamily of $\famF$ on which $\phi_1$ and $\phi_2$ are constant is an $LR$-family and therefore, by Theorem~\ref{thm:even-curves} and monotonicity of $f$, has chromatic number at most $f(\omega(\famF))$.
We conclude that $\chi(\famF)\leq\chi(\famF_1)\chi(\famF_2)f(\omega(\famF))\leq f_{t-1}^2(\omega(\famF))f(\omega(\famF))=f_t(\omega(\famF))$.
\end{proof}

A \emph{closed curve} is a homeomorphic image of a unit circle in the plane.
For a closed curve $\gamma$, the Jordan curve theorem asserts that the set $\setR^2\setminus\gamma$ consists of two arc-connected components, one of which is bounded and denoted by $\int\gamma$ and the other is unbounded and denoted by $\ext\gamma$.

\begin{proof}[Proof of Theorem~\ref{thm:curves} from Theorem~\ref{thm:even-curves}]
We elect to present this proof in an intuitive rather than rigorous way.
Let $\famF$ be a family of curves each intersecting $c_0$ in at least one and at most $t$ points.
Let $\gamma_0$ be a closed curve surrounding $c_0$ very closely so that $\gamma_0$ intersects every curve in $\famF$ in exactly $2t$ points (winding if necessary to increase the number of intersections) and all endpoints of curves in $\famF$ and intersection points of pairs of curves in $\famF$ lie in $\ext\gamma_0$.
We apply geometric inversion to obtain an equivalent family of curves $\famF'$ and a closed curve $\gamma_0'$ with the same properties except that all endpoints of curves in $\famF'$ and intersection points of pairs of curves in $\famF'$ lie in $\int\gamma_0'$.
It follows that some part of $\gamma_0'$ lies in the unbounded component of $\setR^2\setminus\bigcup\famF'$.
We ``cut'' $\gamma_0'$ there and ``unfold'' it into the baseline, transforming $\famF'$ into an equivalent family $\famF''$ of $2t$-curves all endpoints of which and intersection points of pairs of which lie above the baseline.
The ``equivalence'' of $\famF$, $\famF'$, and $\famF''$ means in particular that the intersection graphs of $\famF$, $\famF'$, and $\famF''$ are isomorphic, so the theorem follows from Lemma~\ref{lem:2t-curves} (and thus Theorem~\ref{thm:even-curves}).
\end{proof}

A statement analogous to Theorem~\ref{thm:even-curves} fails for families of objects each consisting of two $1$-curves only, without the ``middle part'' connecting them.
Specifically, we define a \emph{double-curve} as a set $X\subset H^+$ that is a union of two disjoint $1$-curves, denoted by $L(X)$ and $R(X)$ so that the basepoint of $L(X)$ lies to the left of the basepoint of $R(X)$, and we call a family $\famX$ of double-curves an \emph{$LR$-family} if every intersection between two double-curves $X_1,X_2\in\famX$ is an intersection between $L(X_1)$ and $R(X_2)$ or between $L(X_2)$ and $R(X_1)$.

\begin{theorem}
\label{thm:construction}
There exist triangle-free\/ $LR$-families of double-curves with arbitrarily large chromatic number.
\end{theorem}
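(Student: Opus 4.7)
The plan is to adapt the construction of Pawlik, Kozik, Krawczyk, Lasoń, Micek, Trotter, and Walczak \cite{PKK+13,PKK+14}, which realizes triangle-free graphs of arbitrarily large chromatic number as intersection graphs of segments or L-shapes in the plane, so that the objects become double-curves forming an $LR$-family. The key observation is that each object in their construction naturally splits into two pieces that are used independently to create crossings, and these pieces can be set up as the $L$-part and $R$-part of a double-curve, with crossings arranged to occur only between the $L$-part of one object and the $R$-part of another.

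I would proceed by induction on $k$, constructing a triangle-free $LR$-family $\famX_k$ of double-curves with $\chi(\famX_k)\geq k$. The base case $\famX_1$ is a single double-curve. For the inductive step, take a large number $N$ of horizontal translates of $\famX_k$, placed in pairwise disjoint vertical slabs over disjoint intervals of the baseline. For each selection $\sigma$ of one double-curve from each copy, add a new \emph{probe} double-curve $P_\sigma$ whose $L$-part originates from a basepoint to the left of all copies and whose $R$-part originates from a basepoint to the right of all copies; the $L$-part is routed to enter each selected copy from the left and cross only the $R$-part of the chosen double-curve there, while the $R$-part is routed symmetrically from the right. Choosing $N$ sufficiently large and mimicking the pigeonhole/coloring argument of Pawlik et al., any proper $k$-coloring of $\famX_{k+1}$ would admit a probe whose neighborhood inside a single copy uses all $k$ colors, forcing a $(k{+}1)$-st color on the probe.

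The main obstacle is the simultaneous geometric realization of all probes as an $LR$-family: the $L$-parts of the entire augmented family must be pairwise disjoint, and likewise for the $R$-parts. I would address this by placing the basepoints of the probes' $L$-parts in a nested order strictly to the left of all copies and routing each new $L$-part as a ``bridge'' that passes above the earlier bridges and above the $L$-parts of the copies, dipping down into each targeted copy through a narrow channel that crosses only the selected $R$-part; the $R$-parts are handled symmetrically on the right. Triangle-freeness then follows by case analysis: two distinct probes never intersect (their $L$-parts are pairwise disjoint and so are their $R$-parts), so any triangle either lies inside a single copy of $\famX_k$ and is excluded by induction, or consists of a single probe together with two double-curves from the same copy, in which case the two copy-double-curves must cross each other and the probe's neighborhood reduces to an edge of $\famX_k$ incident to a triangle there, again excluded by induction.
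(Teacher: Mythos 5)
Your abstract scheme (a Zykov-style step: $N$ disjoint copies of $\famX_k$ plus one new ``probe'' double-curve per transversal, adjacent to exactly the selected curves) would indeed give triangle-free graphs needing $k+1$ colors, modulo the slip that a probe's neighborhood meets each copy in \emph{one} curve, so the correct statement is that some transversal realizes all $k$ colors across the copies, not ``inside a single copy''. But the entire content of the theorem is the geometric realization as an $LR$-family, and that is where your argument has a genuine gap. You assert that each new $L$-part can be routed as a bridge over everything, ``dipping down into each targeted copy through a narrow channel that crosses only the selected $R$-part'', but you maintain no inductive invariant guaranteeing such channels exist, and at the next level of the recursion they generally do not: the $L$-parts of the probes added at level $k+1$ are arcs anchored on the baseline to the left of the sub-copies that pass over them (with dips), so the $R$-part of a curve lying deep inside a sub-copy becomes fenced off. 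A level-$(k+2)$ probe cannot cross those old $L$-parts (an $L$--$L$ intersection is forbidden in an $LR$-family), cannot slip underneath them near the baseline (it would cross the sub-copies' curves near their basepoints, creating unselected edges and hence potential triangles), and circumventing an old bridge's free end forces it to cross the non-selected $R$-part on which that bridge terminates. On top of this, your construction needs exponentially many probes per level (one per transversal), all of whose $L$-parts must be pairwise disjoint yet thread into the same nested regions, which makes the unproved routing claim even more delicate.

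The paper avoids exactly this difficulty by making the access regions part of the inductive data: it carries along a family $\famP_k$ of pairwise disjoint vertical probes with the invariants that every probe is disjoint from all $L$-parts and that the double-curves meeting a given probe are pairwise disjoint, and that some probe always witnesses $k$ colors. In the step, a fresh copy is nested \emph{inside} each probe $P$, only one new double-curve $X^P_Q$ is added per inner probe $Q$ (its $L$-part confined to $Q$, crossing only the pairwise disjoint $R$-parts that traverse $Q$), and the chromatic-number increment is obtained not by transversal probes but by two new probes $A^P_Q$, $B^P_Q$ together with the dichotomy ``either $\phi(\famX(P))\neq\phi(\famX^P(Q))$, or $X^P_Q$ carries a new color''. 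If you want to salvage your approach, you would have to formulate and prove an accessibility invariant of comparable strength; as written, the realization step --- the heart of the theorem --- is not established.
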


The proof of Theorem~\ref{thm:construction} is an easy adaptation of the construction from \cite{PKK+13,PKK+14} and is presented in detail in Section~\ref{sec:construction}.
The rest of this section is devoted to the proof of Theorem~\ref{thm:even-curves}.

\subsection*{Overview of the proof of Theorem~\ref{thm:even-curves}}

Recall the assertion of Theorem~\ref{thm:even-curves}: the $LR$-families of even-curves are $\chi$-bounded.
The proof is quite long and technical, so we find it useful to provide a high-level overview of its structure.
The proof will be presented via a series of reductions.
First, we will reduce Theorem~\ref{thm:even-curves} to the following statement (Lemma~\ref{lem:2-curves}): the $LR$-families of $2$-curves are $\chi$-bounded.
This statement will be proved by induction on the clique number.
Specifically, we will prove the following as the induction step: if every $LR$-family of $2$-curves $\famF$ with $\omega(\famF)\leq k-1$ satisfies $\chi(\famF)\leq\xi$, then every $LR$-family of $2$-curves $\famF$ with $\omega(\famF)\leq k$ satisfies $\chi(\famF)\leq\zeta$, where $\zeta$ is a constant depending only on $k$ and $\xi$.
The only purpose of the induction hypothesis is to guarantee that if $\omega(\famF)\leq k$ and $c\in\famF$, then the family of $2$-curves in $\famF\setminus\{c\}$ that intersect $c$ has chromatic number at most $\xi$.
For notational convenience, $LR$-families of $2$-curves with the latter property will be called \emph{$\xi$-families}.
We will thus reduce the problem to the following statement (Lemma~\ref{lem:xi}): the $\xi$-families are $\chi$-bounded, where the $\chi$-bounding function depends on $\xi$.

We will deal with $\xi$-families via a series of technical lemmas of the following general form: every $\xi$-family with sufficiently large chromatic number contains a specific configuration of curves.
Two kinds of such configurations are particularly important: (a) a large clique, and (b) a $2$-curve $c$ and a subfamily $\famF'$ with large chromatic number such that all basepoints of $2$-curves in $\famF'$ lie between the basepoints of $c$.
At the core of the argument are the proofs that
\begin{itemize}
\item every $\xi$-family with sufficiently large chromatic number contains (a) or (b) (Lemma~\ref{lem:encompass}),
\item assuming the above, every $\xi$-family with sufficiently large chromatic number contains (a).
\end{itemize}
Combined, they complete the argument.
Since the two proofs are almost identical, we introduce one more reduction---to \emph{$(\xi,h)$-families} (Lemma~\ref{lem:(xi,h)}).
A $(\xi,h)$-family is just a $\xi$-family that satisfies an additional technical condition that allows us to deliver both proofs at once.

\subsection*{More notation and terminology}

Let $\prec$ denote the left-to-right order of points on the baseline ($p_1\prec p_2$ means that $p_1$ is to the left of $p_2$).
For convenience, we also use the notation $\prec$ for curves intersecting the baseline ($c_1\prec c_2$ means that every basepoint of $c_1$ is to the left of every basepoint of $c_2$) and for families of such curves ($\famC_1\prec\famC_2$ means that $c_1\prec c_2$ for any $c_1\in\famC_1$ and $c_2\in\famC_2$).
For a family $\famC$ of curves intersecting the baseline (even-curves or $1$-curves) and two $1$-curves $x$ and $y$, let $\famC(x,y)=\{c\in\famC\colon x\prec c\prec y\}$ or $\famC(x,y)=\{c\in\famC\colon y\prec c\prec x\}$ depending on whether $x\prec y$ or $y\prec x$.
For a family $\famC$ of curves intersecting the baseline and a segment $I$ on the baseline, let $\famC(I)$ denote the family of curves in $\famC$ with all basepoints on $I$.

For an even-curve $c$, let $M(c)$ denote the subcurve of $c$ connecting the basepoints of $L(c)$ and $R(c)$, and let $I(c)$ denote the segment on the baseline connecting the basepoints of $L(c)$ and $R(c)$ (see Figure~\ref{fig:even-curve}).
For a family $\famF$ of even-curves, let $L(\famF)=\{L(c)\colon c\in\famF\}$, $R(\famF)=\{R(c)\colon c\in\famF\}$, and $I(\famF)$ denote the minimal segment on the baseline that contains $I(c)$ for every $c\in\famF$.

A \emph{cap-curve} is a curve in $H^+$ that has both endpoints on the baseline and does not intersect the baseline in any other point.
It follows from the Jordan curve theorem that for every cap-curve $\gamma$, the set $H^+\setminus\gamma$ consists of two arc-connected components, one of which is bounded and denoted by $\int\gamma$ and the other is unbounded and denoted by $\ext\gamma$.

\subsection*{Reduction to \texorpdfstring{$LR$}{LR}-families of \texorpdfstring{$2$}{2}-curves}

We will reduce Theorem~\ref{thm:even-curves} to the following statement on $LR$-families of $2$-curves, which is essentially a special case of Theorem~\ref{thm:even-curves}.

\begin{lemma}
\label{lem:2-curves}
There is a non-decreasing function\/ $f\colon\setN\to\setN$ such that every\/ $LR$-family\/ $\famF$ of\/ $2$-curves satisfies\/ $\chi(\famF)\leq f(\omega(\famF))$.
\end{lemma}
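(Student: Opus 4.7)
The plan is induction on $k = \omega(\famF)$, with the base $k \leq 1$ trivial. For the inductive step, I assume that every $LR$-family of $2$-curves with clique number at most $k-1$ has chromatic number at most some value $\xi = f(k-1)$. The direct consequence for $\famF$ with $\omega(\famF) \leq k$ is that each neighbourhood $\{c' \in \famF \setminus \{c\} : c' \cap c \neq \emptyset\}$ is itself an $LR$-family of $2$-curves of clique number at most $k-1$ (adjoining $c$ to any clique inside it would give a larger clique in $\famF$), hence has chromatic number at most $\xi$. Following the overview, I call $\famF$ with this ``bounded neighbourhood'' property a \emph{$\xi$-family}, and the task reduces to showing that $\xi$-families are $\chi$-bounded as a function of $\xi$ alone.

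The main technical ingredient I would aim for is a McGuinness-style dichotomy: if a $\xi$-family $\famF$ has sufficiently large chromatic number, then either (a) it contains a clique of a prescribed large size, or (b) there exist $c \in \famF$ and $\famF' \subseteq \famF(L(c), R(c))$ (basepoints strictly inside those of $c$) with $\chi(\famF')$ nearly as large as $\chi(\famF)$. Once such a dichotomy is in hand, the proof concludes by iteration: starting from $\famF$ with enormous chromatic number, apply the dichotomy repeatedly; case (b) produces a chain $c_1, c_2, \ldots$ of $2$-curves with $I(c_{i+1}) \subsetneq I(c_i)$ and surviving subfamilies whose $\chi$ shrinks by a controlled amount. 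After $k+1$ iterations, the $LR$ condition together with the topology of the connecting parts $M(c_i)$ should force the chain to contain a clique of size $k+1$, contradicting $\omega(\famF) \leq k$. The surviving chromatic number just before this contradiction bounds $\chi(\famF)$.

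Proving the dichotomy itself is the heart of the argument. The $LR$ condition forbids $L$-$L$ and $R$-$R$ crossings, so $L(\famF)$ and $R(\famF)$ consist of pairwise disjoint $1$-curves and Theorem~\ref{thm:outerstring} cannot be applied to them directly, but must instead be deployed on auxiliary $1$-curve families built from $\famF$. I would examine, for each $c \in \famF$, how much chromatic number is carried by $\famF(L(c), R(c))$: if any such ``interior'' subfamily has large $\chi$ we are already in case (b), so we may assume all interiors have bounded chromatic number. Under that assumption, a sweep of basepoints from left to right ought to let one greedily colour $\famF$ using the bounded-interior assumption together with the $\xi$-family neighbourhood bound, or alternatively assemble an auxiliary outerstring family to which Theorem~\ref{thm:outerstring} collapses $\chi(\famF)$ to a function of $\xi$.

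The main obstacle I foresee is making the dichotomy quantitative enough to survive the iteration. Each application loses a fixed amount of chromatic number, and that loss must depend only on $\xi$---not on $k$---so that $k+1$ iterations remain feasible from a sufficiently high initial $\chi$. Theorem~\ref{thm:construction} is a clear warning that $L$- and $R$-parts alone cannot deliver such a bound; the argument must crucially exploit that $L(c) \cup M(c) \cup R(c)$ together with a segment of the baseline bounds a region of $H^+$ separating $I(c)$ from the exterior. I expect this is also why the authors introduce $(\xi, h)$-families next: an extra integer parameter $h$ looks designed to let a single induction simultaneously yield the dichotomy and its bootstrapped use, sparing the near-duplication of essentially the same sweeping argument.
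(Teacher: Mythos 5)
Your opening reduction is exactly the paper's: induction on $\omega$ turns the problem into showing that $\xi$-families are $\chi$-bounded in terms of $\xi$ alone (Lemma~\ref{lem:xi}), and your identification of the two target configurations (a large clique, or a high-chromatic subfamily squeezed inside some $I(c)$) matches the paper's overview. But from that point on the proposal has a genuine gap on both of the remaining fronts. First, your termination argument is wrong: iterating case~(b) only yields $2$-curves $c_1,c_2,\ldots$ with nested intervals $I(c_{i+1})\subsetneq I(c_i)$, and in an $LR$-family nested $2$-curves need not intersect at all (a $2$-curve with both basepoints inside $I(c)$ is under no obligation to meet $L(c)$ or $R(c)$), so ``the $LR$ condition together with the topology of the $M(c_i)$'' does not force a clique after $k+1$ steps. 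In the paper the clique is extracted from a much more constrained object, the \emph{chain} of Lemma~\ref{lem:chain}, whose definition explicitly requires each new pair $(a_i,b_i)$ to have $L(a_i)$ intersecting all previous $R(a_1),\ldots,R(a_{i-1})$ or symmetrically; building such chains is the hard part and occupies Lemmas~\ref{lem:sideways}, \ref{lem:skeleton} and \ref{lem:chain}, via the skeleton machinery.

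Second, your sketch of the dichotomy itself (``a sweep of basepoints \ldots\ ought to let one greedily colour $\famF$, or alternatively assemble an auxiliary outerstring family'') is not an argument, and the paper indicates why no such elementary route is known: the Gyárfás-type levelling that works for $1$-curves (and underlies Theorem~\ref{thm:outerstring}) fails for $2$-curves, which is precisely why the authors import the Chudnovsky--Scott--Seymour result (Theorem~\ref{thm:CSS}) to get Lemma~\ref{lem:supports}, and supplement it with the quantitative separation statement Lemma~\ref{lem:between}. Moreover, in the paper the dichotomy (Lemma~\ref{lem:encompass}) is not proved directly and then iterated; both it and the final clique-finding step are instances of the single statement about $(\xi,h)$-families (Lemma~\ref{lem:(xi,h)}, where $h$ is a function, not an integer parameter), applied with two different choices of $h$. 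So while your high-level map of the proof is faithful to the paper's overview, the two essential components---how to produce configuration (a) or (b) from large chromatic number, and how configuration-finding is converted into a clique---are respectively unproved and incorrectly argued, and they are exactly where the paper's substantial work (and its reliance on Theorem~\ref{thm:CSS}) lies.
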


A \emph{component} of a family of $1$-curves $\famS$ is an arc-connected component of $\bigcup\famS$ (the union of all curves in $\famS$).
The following easy but powerful observation reuses an idea from \cite{LMPW14,McG00,Suk14}.

\begin{lemma}
\label{lem:planar}
For every\/ $LR$-family of even-curves\/ $\famF$, if\/ $\famF^\star$ is the family of curves\/ $c\in\famF$ such that\/ $L(c)$ and\/ $R(c)$ lie in different components of\/ $L(\famF)\cup R(\famF)$, then\/ $\chi(\famF^\star)\leq 4$.
\end{lemma}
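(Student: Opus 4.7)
The plan is to encode $\famF^\star$ as the edge set of an auxiliary planar multigraph $G$, produce a proper $4$-coloring of its vertices via the four-color theorem, and then transfer that coloring to $\famF^\star$. For each $c\in\famF^\star$, let $A(c)$ denote the component of $L(\famF)\cup R(\famF)$ containing $L(c)$ and $B(c)$ the component containing $R(c)$; by the very definition of $\famF^\star$ we have $A(c)\neq B(c)$. Take $G$ to be the multigraph whose vertex set is the collection of components of $L(\famF)\cup R(\famF)$ and which, for every $c\in\famF^\star$, contains an edge $e_c$ joining $A(c)$ to $B(c)$.

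The first substantive step is to verify that $G$ is planar. The components are pairwise disjoint arc-connected subsets of $H^+$. Since $\famF$ is an $LR$-family, no curve $c'\in\famF\setminus\{c\}$ meets $M(c)$ (such an intersection would not be of the form $L$-$R$), so each $M(c)$ is disjoint from $L(\famF)\cup R(\famF)$ except at its two basepoint endpoints, which lie in $A(c)$ and $B(c)$ respectively; moreover the arcs $\{M(c)\}_{c\in\famF^\star}$ are pairwise disjoint. Thus, choosing one representative point $v_K$ in each component $K$, and routing $e_c$ from $v_{A(c)}$ through $A(c)$ to the basepoint of $L(c)$, then along $M(c)$, then through $B(c)$ to $v_{B(c)}$, and finally perturbing locally inside each component to separate the finitely many routes that meet there, yields a planar drawing of $G$. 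This topological step is where the real care is required; the rest is bookkeeping.

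The four-color theorem then supplies a proper $4$-coloring $\phi$ of the vertices of $G$ (parallel edges play no role because the vertex chromatic number equals that of the underlying simple graph). Define $\psi\colon\famF^\star\to\{1,2,3,4\}$ by $\psi(c)=\phi(B(c))$. To see that $\psi$ is proper, suppose $c,c'\in\famF^\star$ intersect; by the $LR$ property we may assume $L(c)\cap R(c')\neq\emptyset$, which places $L(c)$ and $R(c')$ in a common component and therefore forces $A(c)=B(c')$. Then $\psi(c')=\phi(B(c'))=\phi(A(c))$, whereas $\psi(c)=\phi(B(c))$, and these differ because the vertices $A(c)$ and $B(c)$ are joined by $e_c$ in $G$ and $\phi$ is proper. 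Hence $\chi(\famF^\star)\leq 4$, as claimed.
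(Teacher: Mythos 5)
Your proposal is correct and follows essentially the same route as the paper: an auxiliary (multi)graph on the components of $L(\famF)\cup R(\famF)$ with an edge for each $c\in\famF^\star$, planarity justified by routing edges along the arcs $M(c)$ (which the $LR$-condition keeps disjoint from all other curves), a proper $4$-coloring of this planar graph, and the transfer of colors to $\famF^\star$ via the component containing one designated half of each curve (you use $R(c)$ where the paper uses $L(c)$, an immaterial difference).
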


\begin{proof}
Let $G$ be an auxiliary graph where the vertices are the components of $L(\famF)\cup R(\famF)$ and the edges are the pairs $V_1V_2$ of components such that there is a curve $c\in\famF^\star$ with $L(c)\subseteq V_1$ and $R(c)\subseteq V_2$ or $L(c)\subseteq V_2$ and $R(c)\subseteq V_1$.
Since $\famF$ is an $LR$-family, the curves in $\famF^\star$ can intersect only within the components of $L(\famF)\cup R(\famF)$.
It follows that $G$ is planar and thus $4$-colorable.
Fix a proper $4$-coloring of $G$, and assign the color of a component $V$ to every curve $c\in\famF^\star$ with $L(c)\subseteq V$.
For any $c_1,c_2\in\famF^\star$, if $L(c_1)$ and $R(c_2)$ intersect, then $L(c_1)$ and $R(c_2)$ lie in the same component $V_1$ while $L(c_2)$ lies in a component $V_2$ such that $V_1V_2$ is an edge of $G$, so $c_1$ and $c_2$ are assigned different colors.
The coloring of $\famF^\star$ is therefore proper.
\end{proof}

\begin{proof}[Proof of Theorem~\ref{thm:even-curves} from Lemma~\ref{lem:2-curves}]
We show that $\chi(\famF)\leq f(\omega(\famF))+4$, where $f$ is the function claimed by Lemma~\ref{lem:2-curves}.
We have $\famF=\famF_1\cup\famF_2$, where $\famF_1=\{c\in\famF\colon L(c)$ and $R(c)$ lie in the same component of $L(\famF)\cup R(\famF)\}$ and $\famF_2=\{c\in\famF\colon L(c)$ and $R(c)$ lie in different components of $L(\famF)\cup R(\famF)\}$.
Lemma~\ref{lem:planar} yields $\chi(\famF_2)\leq 4$.
It remains to show that $\chi(\famF_1)\leq f(\omega(\famF))$.

Let $c_1,c_2\in\famF_1$.
We claim that the intervals $I(c_1)$ and $I(c_2)$ are nested or disjoint.
Suppose they are neither nested nor disjoint.
The components of $L(\famF)\cup R(\famF)$ are disjoint from the curves of the form $M(c)$ with $c\in\famF$ except at common basepoints.
For $k\in\{1,2\}$, since $L(c_k)$ and $R(c_k)$ belong to the same component of $L(\famF)\cup R(\famF)$, the basepoints of $L(c_k)$ and $R(c_k)$ can be connected by a cap-curve $\gamma_k$ disjoint from $M(c)$ for every $c\in\famF$ except at the endpoints of $M(c)$ when $c=c_k$.
We assume (without loss of generality) that $\gamma_1$ and $\gamma_2$ intersect in a finite number of points and each of their intersection points is a proper crossing.
Since the intervals $I(c_1)$ and $I(c_2)$ are neither nested nor disjoint, the basepoints of $L(c_2)$ and $R(c_2)$ lie one in $\int\gamma_1$ and the other in $\ext\gamma_1$.
This implies that $\gamma_1$ and $\gamma_2$ intersect in an odd number of points, by the Jordan curve theorem.
For $k\in\{1,2\}$, let $\tilde\gamma_k$ be the closed curve obtained as the union of $\gamma_k$ and $M(c_k)$.
It follows that $\tilde\gamma_1$ and $\tilde\gamma_2$ intersect in an odd number of points and each of their intersection points is a proper crossing, which is a contradiction to the Jordan curve theorem.

Transform $\famF_1$ into a family of $2$-curves $\famF_1'$ replacing the part $M(c)$ of every $2$-curve $c\in\famF_1$ by the lower semicircle connecting the endpoints of $M(c)$.
Since the intervals $I(c)$ with $c\in\famF_1$ are pairwise nested or disjoint, these semicircles are pairwise disjoint.
Consequently, $\famF_1'$ is an $LR$-family.
Since the intersection graphs of $\famF_1$ and $\famF_1'$ are isomorphic, Lemma~\ref{lem:2-curves} implies $\chi(\famF_1)=\chi(\famF_1')\leq f(\omega(\famF_1'))\leq f(\omega(\famF))$.
\end{proof}

\subsection*{Reduction to \texorpdfstring{$\xi$}{ξ}-families}

For $\xi\in\setN$, a \emph{$\xi$-family} is an $LR$-family of $2$-curves $\famF$ with the following property: for every $2$-curve $c\in\famF$, the family of $2$-curves in $\famF\setminus\{c\}$ that intersect $c$ has chromatic number at most $\xi$.
We reduce Lemma~\ref{lem:2-curves} to the following statement on $\xi$-families.

\begin{lemma}
\label{lem:xi}
For any\/ $\xi,k\in\setN$, there is a constant\/ $\zeta\in\setN$ such that every\/ $\xi$-family\/ $\famF$ with\/ $\omega(\famF)\leq k$ satisfies\/ $\chi(\famF)\leq\zeta$.
\end{lemma}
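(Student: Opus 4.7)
My strategy would be contrapositive: prove that sufficiently large chromatic number in a $\xi$-family forces a clique of size $k+1$, so that the assumption $\omega(\famF)\leq k$ caps $\chi(\famF)$ by some $\zeta=\zeta(\xi,k)$. The argument naturally splits into two structural statements with almost identical bookkeeping, which are then unified through an auxiliary notion of $(\xi,h)$-family.

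\textbf{Structural dichotomy (first target).} I would first try to prove: there exist thresholds $T_1=T_1(\xi,k)$ and $m=m(\xi,k)$ such that every $\xi$-family $\famF$ with $\chi(\famF)>T_1$ contains either (a) a clique of size $k+1$, or (b) a $2$-curve $c\in\famF$ with $\chi(\famF(I(c)))>m$. The natural tool is Theorem~\ref{thm:outerstring} applied to the $1$-curve family $L(\famF)\cup R(\famF)$, which yields a coloring with $f_0(\omega(\famF))$ colors. Using this coloring, together with the nesting/disjointness arguments developed around Lemma~\ref{lem:planar}, I would peel off a constantly colorable layer of ``outermost'' $2$-curves (those whose interval $I(c)$ is maximal with respect to nesting). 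If the chromatic number of the original family is much larger than this layer's bound, then the remaining curves live inside some outermost $I(c)$, producing the encompassing witness in (b).

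\textbf{Upgrade from dichotomy to clique (second target).} The second target is to strengthen the above to: every $\xi$-family with $\chi(\famF)>T_2(\xi,k)$ contains a clique of size $k+1$, with no escape clause. I would iterate the dichotomy: if case (b) produces a $2$-curve $c_1$ and a subfamily $\famF_1=\famF(I(c_1))$ with large chromatic number, then $\famF_1$ is still a $\xi$-family, so the dichotomy applies again, giving $c_2\in\famF_1$ whose basepoints lie strictly inside $I(c_1)$, and so on. The critical geometric observation is that for such nested $c_i\prec c_j$ in an $LR$-family, the two $1$-curves $L(c_j),R(c_j)$ emanating from inside $I(c_i)$ must exit the region bounded by $M(c_i)\cup I(c_i)$ or else be trapped there; the $LR$-condition then forces the exit to occur as an $L$--$R$ crossing, making $c_i$ and $c_j$ intersect. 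After $k+1$ iterations one obtains a $(k{+}1)$-clique, contradicting $\omega(\famF)\leq k$. The $\xi$-family hypothesis is essential at every iteration step, since we must discard the at most $\xi$-chromatic subfamily that already meets the currently chosen encompassing curve before reapplying the dichotomy.

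\textbf{Technical unification.} Because the two arguments use nearly identical counting, the cleanest execution follows the paper's suggestion of an auxiliary class of $(\xi,h)$-families (Lemma~\ref{lem:(xi,h)}), where the parameter $h$ records the progress made along the nested chain. A single induction on $h$ delivers both the dichotomy and its upgrade at once, and Lemma~\ref{lem:xi} falls out by instantiating $h=k+1$ and propagating the resulting (likely tower-type) bounds through the composition with $f_0$.

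The principal obstacle is the structural dichotomy itself. Theorem~\ref{thm:outerstring} only controls the $1$-curves $L(\famF)\cup R(\famF)$ as a family; translating that coloring back to a coloring of the $2$-curves in $\famF$ demands carefully matching the $L$-color with the $R$-color under the $LR$-condition while simultaneously handling the nesting of the intervals $I(c)$. Identifying exactly which outermost-layer configuration is colorable by a constant, and showing that any obstruction to such a coloring localizes inside a single $I(c)$, is where the main combinatorial effort of the proof will reside.
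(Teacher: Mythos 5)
Your high-level skeleton (a dichotomy ``large clique or an encompassing curve'', an upgrade to a clique, unified via $(\xi,h)$-families) matches the paper's plan, but both of your substantive steps fail. The ``upgrade'' rests on a false geometric claim: a $2$-curve $c_j$ whose basepoints lie inside $I(c_i)$ need \emph{not} intersect $c_i$. The region bounded by $M(c_i)\cup I(c_i)$ lies entirely below the baseline, whereas $L(c_j)$ and $R(c_j)$ live in $H^+$; and above the baseline nothing is enclosed, since $L(c_i)$ and $R(c_i)$ are two arcs with free endpoints. So nesting traps nothing and forces no $L$--$R$ crossing, and iterating the dichotomy $k+1$ times does not produce a clique. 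This is precisely why the paper has to manufacture intersections through Theorem~\ref{thm:CSS}, the skeleton lemmas (Lemmas~\ref{lem:supports}, \ref{lem:sideways}, \ref{lem:skeleton}) and the chain construction (Lemma~\ref{lem:chain}): in a chain, the intersections with previously built curves are forced by $1$-curves attached to a cap-curve, not by nesting of intervals. Relatedly, $h$ in a $(\xi,h)$-family is a function expressing a localization property of basepoints, not a counter of progress along a nested chain, so ``instantiating $h=k+1$'' is not meaningful; the paper derives Lemma~\ref{lem:xi} by showing that a $\xi$-family with $\omega\leq k$ satisfying the conclusion of Lemma~\ref{lem:encompass} is a $(\xi,h)$-family for the function $h$ given by that lemma.

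The dichotomy step is also unsupported. In any $LR$-family no two $L$-curves and no two $R$-curves intersect, so $L(\famF)\cup R(\famF)$ has clique number at most $2$ and Theorem~\ref{thm:outerstring} colors it with $f_0(2)$ colors \emph{regardless} of $\chi(\famF)$; moreover a proper coloring of the $1$-curves says nothing about pairs $L(c_1),R(c_2)$ of different colors, which are exactly the intersections to be controlled. Theorem~\ref{thm:construction} shows that information at the level of the $1$-curves alone can never suffice. Likewise, your key claim that the ``outermost layer'' (curves whose intervals $I(c)$ are maximal, hence pairwise disjoint) is constantly colorable is given no proof; once the intervals are disjoint, the middle parts $M(c)$ are disjoint from everything else, so this layer is effectively an unrestricted $LR$-family of double-curve-like objects, and Lemma~\ref{lem:planar} only handles curves whose two halves lie in different components. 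Even granting that claim, the remaining curves are distributed among several disjoint outermost intervals and can still intersect across intervals through their $1$-curves, so large chromatic number of the remainder does not localize to a single $I(c)$ without further argument. In short, the two places where you defer to ``bookkeeping'' are exactly where the paper's main machinery (Lemmas~\ref{lem:between}, \ref{lem:supports}--\ref{lem:chain}) does its work, and the shortcuts you propose in their place do not go through.
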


\begin{proof}[Proof of Lemma~\ref{lem:2-curves} from Lemma~\ref{lem:xi}]
Let $f(1)=1$.
For $k\geq 2$, let $f(k)$ be the constant claimed by Lemma~\ref{lem:xi} such that every $f(k-1)$-family $\famF$ with $\omega(\famF)\leq k$ satisfies $\chi(\famF)\leq f(k)$.
Let $k=\omega(\famF)$, and proceed by induction on $k$ to prove $\chi(\famF)\leq f(k)$.
Clearly, if $k=1$, then $\chi(\famF)=1$.
For the induction step, assume $k\geq 2$.
For every $c\in\famF$, the family of $2$-curves in $\famF\setminus\{c\}$ that intersect $c$ has clique number at most $k-1$ and therefore, by the induction hypothesis, has chromatic number at most $f(k-1)$.
That is, $\famF$ is an $f(k-1)$-family, and the definition of $f$ yields $\chi(\famF)\leq f(k)$.
\end{proof}

\subsection*{Dealing with \texorpdfstring{$\xi$}{ξ}-families}

First, we establish the following special case of Lemma~\ref{lem:xi}.

\begin{lemma}
\label{lem:nested}
For every\/ $\xi\in\setN$, every\/ $\xi$-family\/ $\famF$ with\/ $\bigcap_{c\in\famF}I(c)\neq\emptyset$ satisfies\/ $\chi(\famF)\leq 4\xi+4$.
\end{lemma}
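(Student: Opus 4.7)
The plan is to combine Lemma~\ref{lem:planar} with a structural analysis exploiting the common basepoint $p$ and the $\xi$-family hypothesis. First, I observe that since $p \in I(c)$ for every $c \in \famF$ and, by the $LR$-family condition, the subcurves $M(c)$ (each lying entirely below the baseline) are pairwise disjoint, the intervals $I(c)$ must be pairwise nested: two intervals overlapping in an unnested way would force their $M$-parts to cross below the baseline, contradicting the Jordan curve theorem. Thus the intervals are linearly ordered by inclusion, equipping $\famF$ with a canonical outer-to-inner ordering.

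Next, I apply Lemma~\ref{lem:planar} to obtain $\chi(\famF^\star) \leq 4$ for the subfamily $\famF^\star$ of curves $c$ whose $L$- and $R$-parts lie in distinct components of $L(\famF) \cup R(\famF)$. Setting $\famG = \famF \setminus \famF^\star$, it remains to prove $\chi(\famG) \leq 4\xi$. For each $c \in \famG$, the connectivity of $L(c)$ and $R(c)$ in a single component of $L(\famF) \cup R(\famF)$ yields witnesses $a(c), b(c) \in \famF \setminus \{c\}$ with $R(a(c)) \cap L(c) \neq \emptyset$ and $L(b(c)) \cap R(c) \neq \emptyset$; in particular every $c \in \famG$ has at least one intersecting neighbor, and by the linear nesting every neighbor of $c$ is either strictly outer or strictly inner to $c$.

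I then partition $\famG = \famG_A \cup \famG_B \cup \famG_C$ according to whether the intersecting neighbors of $c$ are all strictly outer (class $\famG_B$), all strictly inner (class $\famG_C$), or of both kinds (class $\famG_A$). Two intersecting curves in $\famG_B$ would place one strictly inside the other, contradicting the defining property of $\famG_B$, so $\famG_B$ is an independent set and $\chi(\famG_B) \leq 1$; the same argument gives $\chi(\famG_C) \leq 1$. Hence $\chi(\famG) \leq \chi(\famG_A) + 2$, and the task reduces to bounding $\chi(\famG_A) \leq 4\xi - 2$.

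For $\famG_A$, the strategy is to assign each $c$ a canonical witness $w(c) \in N(c)$ (for instance, the innermost strictly outer intersecting neighbor) and color $c$ by its value in a fixed proper $\xi$-coloring of $N(w(c))$ supplied by the $\xi$-family hypothesis. Intersecting pairs in $\famG_A$ sharing a witness are then distinguished automatically, and the remaining pairs---whose witnesses differ---are separated by a bounded secondary label (expected to require four values, via a planarity-style argument on the auxiliary witness-assignment graph, in the spirit of Lemma~\ref{lem:planar}). Making the canonical witness strategy rigorous, and in particular verifying that a bounded label palette suffices to separate intersecting pairs with distinct witnesses, is the main technical obstacle and is precisely where the constant $4\xi + 4$ in the statement is shaped.
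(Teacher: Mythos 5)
Your reductions up to the last step are fine: the intervals $I(c)$ are indeed pairwise nested (the $M$-parts of $2$-curves in an $LR$-family lie below the baseline and are pairwise disjoint, so an unnested overlap would violate the Jordan curve theorem), Lemma~\ref{lem:planar} gives $\chi(\famF^\star)\leq 4$, and your classes $\famG_B$, $\famG_C$ are independent sets. But the heart of the lemma is exactly the step you leave open: bounding $\chi(\famG_A)$. The canonical-witness scheme does not separate an intersecting pair $c_1,c_2$ with $w(c_1)\neq w(c_2)$ (not even when $w(c_2)=c_1$), since the $\xi$-colorings of $N(w(c_1))$ and $N(w(c_2))$ reuse the same palette with no relation between them; and the claim that a ``planarity-style argument on the witness-assignment graph'' yields a bounded secondary label is not substantiated --- no auxiliary graph is defined, and nothing in the nesting structure prevents a curve from intersecting many curves with many distinct witnesses. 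In effect, the conflict graph of ``intersecting pairs with distinct witnesses'' is as hard to color as the original family, so the difficulty has been relocated, not resolved. (There is also an arithmetic mismatch: your scheme would give at best $4\xi$ colors for $\famG_A$, while your bookkeeping requires $4\xi-2$.)

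The paper's proof closes this gap with a different, shorter mechanism that you may want to note. Using the nested order $L(c_1)\prec\cdots\prec L(c_n)\prec R(c_n)\prec\cdots\prec R(c_1)$, one applies Lemma~\ref{lem:mcguinness} (with $\alpha=1$, $\beta=2\xi+1$): if $\chi(\famF)>4\xi+4$, there are intersecting $c_i,c_j$ with $\chi(\{c_{i+1},\ldots,c_{j-1}\})>2\xi+1$. Taking a cap-curve $\nu\subseteq L(c_i)\cup R(c_j)$ through the intersection point, every in-between curve that meets $\nu$ meets $c_i$ or $c_j$, hence those curves have chromatic number at most $2\xi$ by the $\xi$-family property, while every in-between curve avoiding $\nu$ has its $L$-part in $\int\nu$ and its $R$-part in $\ext\nu$, so these curves are pairwise disjoint; this caps the in-between family at $2\xi+1$, a contradiction. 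The missing idea in your write-up is precisely this ``two intersecting curves trap everything between them'' argument; some substitute for it (rather than a direct global coloring via witnesses) is what makes the constant $4\xi+4$ attainable.
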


The proof of Lemma~\ref{lem:nested} is essentially the same as the proof of Lemma~19 in \cite{SW15}.
We need the following elementary lemma, which was also used in various forms in \cite{LMPW14,McG96,McG00,RW,Suk14}.
We include its proof, as we will later extend it when proving Lemma~\ref{lem:between}.

\begin{lemma}[McGuinness {\cite[Lemma 2.1]{McG96}}]
\label{lem:mcguinness}
Let\/ $G$ be a graph, $\prec$ be a total order on the vertices of\/ $G$, and\/ $\alpha,\beta\in\setN$.
If\/ $\chi(G)>(2\beta+2)\alpha$, then\/ $G$ has an induced subgraph\/ $H$ such that\/ $\chi(H)>\alpha$ and\/ $\chi(G(u,v))>\beta$ for every edge\/ $uv$ of\/ $H$.
In particular, if\/ $\chi(G)>2\beta+2$, then\/ $G$ has an edge\/ $uv$ with\/ $\chi(G(u,v))>\beta$.
Here, $G(u,v)$ denotes the subgraph of\/ $G$ induced on the vertices strictly between\/ $u$ and\/ $v$ in the order\/ $\prec$.
\end{lemma}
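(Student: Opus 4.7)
The plan is to partition $V(G)$ into $2\beta+2$ classes using a level function and then apply pigeonhole. For each $v\in V(G)$, set $\phi(v):=\chi(G[\{w:w\preceq v\}])$. By concatenating an optimal coloring of $G[\{w\preceq u\}]$ (on $\phi(u)$ colors) with a disjoint optimal coloring of $G(u,v)$ (on $\chi(G(u,v))$ fresh colors) and then giving $v$ one further color, one obtains the fundamental inequality
\[
\phi(v)\,\leq\,\phi(u)+\chi(G(u,v))+1
\]
for every edge $uv$ of $G$ with $u\prec v$. Call such an edge \emph{short} if $\chi(G(u,v))\leq\beta$ and \emph{long} otherwise; for short edges this yields $0\leq\phi(v)-\phi(u)\leq\beta+1<2\beta+2$.

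Next I partition $V(G)$ into classes
\[
V_i:=\{v\in V(G):\phi(v)\equiv i\pmod{2\beta+2}\},\qquad i\in\{0,1,\ldots,2\beta+1\}.
\]
Since $\chi(G)\leq\sum_{i=0}^{2\beta+1}\chi(G[V_i])$ and $\chi(G)>(2\beta+2)\alpha$, some $V_{i^\ast}$ satisfies $\chi(G[V_{i^\ast}])>\alpha$. I take $H:=G[V_{i^\ast}]$ and check that every edge of $H$ is long: if $uv\in E(H)$ with $u\prec v$ were short, then $\phi(v)-\phi(u)$ would lie in $\{0,1,\ldots,\beta+1\}$ and also be a nonnegative multiple of $2\beta+2$, forcing $\phi(u)=\phi(v)$. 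This leftover same-level case is to be eliminated by a secondary tie-breaker within each $\phi$-level, made possible by the slack in the modulus $2\beta+2$ (nearly twice the worst-case jump $\beta+1$), so the partition can be arranged so that $\phi$ becomes strictly increasing along every edge of $G$ while still using at most $2\beta+2$ classes.

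The ``in particular'' statement ($\alpha=1$) then follows at once: $\chi(G[V_{i^\ast}])>1$ forces $V_{i^\ast}$ to carry at least one edge of $G$, and by the verification step that edge must be long. The main obstacle is the degenerate case $\phi(u)=\phi(v)$ across an edge $uv$, since $\phi$ is only weakly monotone along edges in general; resolving it inside a single residue class without inflating the number of classes beyond $2\beta+2$ is the single technically delicate point, and it is precisely what dictates the modulus $2\beta+2$ rather than the tighter $\beta+2$ suggested by the jump bound alone. The remaining pieces—the combinatorial inequality for $\phi$, the pigeonhole extraction of $V_{i^\ast}$, and the deduction of the ``in particular'' case—I expect to be routine.
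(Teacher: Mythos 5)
Your inequality $\phi(v)\leq\phi(u)+\chi(G(u,v))+1$ and the residue-class pigeonhole do handle every short edge along which $\phi$ strictly increases, but there is a genuine gap exactly at the point you flag: edges with $\phi(u)=\phi(v)$, and this is not a removable technicality. The prefix-chromatic function $\phi$ can stall completely. For instance, if the initial segment of $\prec$ is a clique $K_N$ disjoint from all later vertices, then every later vertex has $\phi=N$ (as long as the remainder has chromatic number at most $N$), so the entire rest of $G$ --- an arbitrary ordered graph, whose chromatic number may still exceed $(2\beta+2)\alpha$ --- sits inside a single $\phi$-level, and your inequality says nothing about any of its edges. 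Consequently the ``secondary tie-breaker within each level'' would have to prove the lemma itself on that level, which is circular; the slack between $\beta+2$ and $2\beta+2$ cannot supply it. Concretely, a single level can contain a clique of $\beta+2$ consecutive vertices (all of whose edges are short, since at most $\beta$ vertices lie between any two of them), and more generally the subgraph induced on a level is unrestricted and can have arbitrarily large chromatic number. So the claim that the partition ``can be arranged so that $\phi$ becomes strictly increasing along every edge'' while using at most $2\beta+2$ classes is false as stated, and no bounded refinement of the $\phi$-levels achieves it.

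The paper's proof avoids this trap by never measuring the chromatic number of a prefix: it greedily cuts the order into blocks $V_0\prec\cdots\prec V_n$ whose \emph{own induced} chromatic number is exactly $\beta+1$ (except possibly the last), properly colors each block with $\beta+1$ colors, and takes as classes the pairs (color, parity of block index), which is $2\beta+2$ classes in total. Within a class there are no same-block edges (each color class of a block is independent), and any remaining edge joins two blocks of equal parity, so an entire block of chromatic number $\beta+1$ lies strictly between its endpoints in $\prec$, certifying $\chi(G(u,v))>\beta$; pigeonhole over the $2\beta+2$ classes then gives $H$. The essential difference from your construction is that the greedy blocks have chromatic number bounded by $\beta+1$ \emph{by construction}, so same-block edges can be destroyed with only $\beta+1$ colors, whereas your $\phi$-levels have no such bound. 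If you rebuild your argument on this greedy block decomposition, the level function and its inequality become unnecessary.
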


\begin{proof}
Let $G[U]$ denote the subgraph of $G$ induced on a set of vertices $U$.
Partition the vertices of $G$ into subsets $V_0\prec\cdots\prec V_n$ so that $\chi(G[V_i])=\beta+1$ for $0\leq i<n$ and $\chi(G[V_n])\leq\beta+1$.
This is done greedily, by processing the vertices of $G$ in the order $\prec$, adding them to $V_0$ until $\chi(G[V_0])=\beta+1$, then adding them to $V_1$ until $\chi(G[V_1])=\beta+1$, and so on.
For $0\leq i\leq n$, a proper $(\beta+1)$-coloring of $G[V_i]$ yields a partition of $V_i$ into color classes $V_i^1,\ldots,V_i^{\beta+1}$ that are independent sets in $G$.
Let $r\in\{1,\ldots,\beta+1\}$ be such that $\chi(G[\bigcup_{i=0}^nV_i^r])$ is maximized.
It follows that $\chi(G[\bigcup_{i=0}^nV_i^r])\geq\chi(G)/(\beta+1)>2\alpha$ and thus $\chi(G[\bigcup_{i\text{ even}}V_i^r])>\alpha$ or $\chi(G[\bigcup_{i\text{ odd}}V_i^r])>\alpha$.
Let $H=G[\bigcup_{i\text{ even}}V_i^r]$ or $H=G[\bigcup_{i\text{ odd}}V_i^r]$ accordingly, so that $\chi(H)>\alpha$.
Now, if $uv$ is an edge of $H$, then $u\in V_k^r$ and $v\in V_\ell^r$ for two distinct indices $k,\ell\in\{0,\ldots,n\}$ of the same parity (because each $V_i^r$ is an independent set in $G$), and therefore $G[V_i]$ is a subgraph of $G(u,v)$ for every (at least one) index $i\in\{1,\ldots,n-1\}$ strictly between $k$ and $\ell$, witnessing $\chi(G(u,v))>\beta$.
\end{proof}

\begin{proof}[Proof of Lemma~\ref{lem:nested}]
Suppose $\chi(\famF)>4\xi+4$.
Since $\bigcap_{c\in\famF}I(c)\neq\emptyset$, the $2$-curves in $\famF$ can be enumerated as $c_1,\ldots,c_n$ so that $L(c_1)\prec\cdots\prec L(c_n)\prec R(c_n)\prec\cdots\prec R(c_1)$, where $n=\lvert\famF\rvert$.
Lemma~\ref{lem:mcguinness} applied to the intersection graph of $\famF$ and the order $c_1,\ldots,c_n$ provides two indices $i,j\in\{1,\ldots,n\}$ such that the $2$-curves $c_i$ and $c_j$ intersect and $\chi\bigl(\{c_{i+1},\ldots,c_{j-1}\}\bigr)>2\xi+1$.
Assume $L(c_i)$ and $R(c_j)$ intersect; the argument for the other case is analogous.
There is a cap-curve $\nu\subseteq L(c_i)\cup R(c_j)$ connecting the basepoints of $L(c_i)$ and $R(c_j)$.
Every curve intersecting $\nu$ intersects $c_i$ or $c_j$.
Since $\famF$ is a $\xi$-family, the $2$-curves in $\{c_{i+1},\ldots,c_{j-1}\}$ that intersect $c_i$ have chromatic number at most $\xi$, and so do those that intersect $c_j$.
Every $2$-curve $c_k\in\{c_{i+1},\ldots,c_{j-1}\}$ not intersecting $\nu$ satisfies $L(c_k)\subset\int\nu$ and $R(c_k)\subset\ext\nu$, so these $2$-curves are pairwise disjoint.
We conclude that $\chi\bigl(\{c_{i+1},\ldots,c_{j-1}\}\bigr)\leq 2\xi+1$, which is a contradiction.
\end{proof}

Lemma~\ref{lem:mcguinness} easily implies that every family of $2$-curves $\famF$ with $\chi(\famF)>(2\beta+2)^2\alpha$ contains a subfamily $\famH$ with $\chi(\famH)>\alpha$ such that $\chi(\famF(L(c_1),L(c_2)))>\beta$ and $\chi(\famF(R(c_1),R(c_2)))>\beta$ for any two intersecting $2$-curves $c_1,c_2\in\famH$.
This is considerably strengthened by the following lemma.
Its proof is based on the same general idea as the proof of Lemma~\ref{lem:mcguinness} presented above.

\begin{lemma}
\label{lem:between}
For every\/ $\xi\in\setN$, there is a function\/ $f\colon\setN\times\setN\to\setN$ with the following property: for any\/ $\alpha,\beta\in\setN$ and every\/ $\xi$-family\/ $\famF$ with\/ $\chi(\famF)>f(\alpha,\beta)$, there is a subfamily\/ $\famH\subseteq\famF$ such that\/ $\chi(\famH)>\alpha$ and\/ $\chi(\famF(x,y))>\beta$ for any two intersecting\/ $1$-curves\/ $x,y\in L(\famH)\cup R(\famH)$.
\end{lemma}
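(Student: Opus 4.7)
The approach mirrors the proof of Lemma~\ref{lem:mcguinness} above, using the left-to-right order of basepoints on the baseline, with the $\xi$-family hypothesis invoked through Lemma~\ref{lem:nested} to handle curves that do not fit neatly into the basepoint partition. First, I would greedily partition the basepoints of $\famF$ into consecutive intervals $J_0\prec J_1\prec\cdots\prec J_n$ such that, for each $i<n$, the subfamily $\famG_i:=\{c\in\famF:c\text{ has both basepoints in }J_i\}$ satisfies $\chi(\famG_i)=\beta+1$, while $\chi(\famG_n)\leq\beta+1$. This is possible because, as the sweep proceeds, adding one basepoint $p$ to the current $J_i$ completes at most one new $2$-curve in $\famG_i$ (the curve having $p$ as a basepoint, and only when its other basepoint already lies in $J_i$), so $\chi(\famG_i)$ grows by at most $1$ per step. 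I would then fix a proper $(\beta+1)$-coloring of each $\famG_i$ with independent color classes $\famG_i^1,\ldots,\famG_i^{\beta+1}$ and apply two pigeonholes---first on the color $r\in\{1,\ldots,\beta+1\}$, then on the parity $\pi\in\{0,1\}$ of the group index---producing $r$ and $\pi$ such that $\famH_0:=\bigcup_{i\equiv\pi\,(\bmod\,2)}\famG_i^r$ has $\chi(\famH_0)\geq\chi\bigl(\bigcup_i\famG_i\bigr)/(2(\beta+1))$.

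The between-condition for $\famH_0$ then follows exactly as in Lemma~\ref{lem:mcguinness}: if $c_1,c_2\in\famH_0$ have $L(c_1)\cap R(c_2)\neq\emptyset$, then independence of each $\famG_i^r$ forces $c_1\in\famG_k^r$ and $c_2\in\famG_\ell^r$ with $k\neq\ell$, and same parity gives $|k-\ell|\geq 2$. Assuming WLOG $k<\ell$, any $J_m$ with $k<m<\ell$ lies strictly between the basepoint of $L(c_1)$ in $J_k$ and the basepoint of $R(c_2)$ in $J_\ell$, so $\famG_m\subseteq\famF(L(c_1),R(c_2))$ and $\chi(\famF(L(c_1),R(c_2)))\geq\chi(\famG_m)=\beta+1>\beta$. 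The symmetric case $L(c_2)\cap R(c_1)\neq\emptyset$ is handled identically, so if $\chi\bigl(\bigcup_i\famG_i\bigr)>2(\beta+1)\alpha$, the choice $\famH:=\famH_0$ meets both conclusions of the lemma.

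The main obstacle is that $\chi\bigl(\bigcup_i\famG_i\bigr)$---the contribution of \emph{non-spanning} curves---can be much smaller than $\chi(\famF)$, because \emph{spanning} curves whose two basepoints lie in different $J_i$'s are ignored in the construction above. Here the $\xi$-family hypothesis is essential: for each inter-group gap $g_i$, the spanning curves whose segment $I(c)$ contains $g_i$ share $g_i$ as a common point and form a sub-$\xi$-family, so by Lemma~\ref{lem:nested} their chromatic number is at most $4\xi+4$. The delicate step is to weave this spanning bound into the non-spanning parity decomposition---for instance by an additional pigeonhole over a ``home gap'' assigned to each spanning curve---so that a single subfamily $\famH$ simultaneously achieves $\chi(\famH)>\alpha$ and satisfies the between-condition for every intersecting $L$-$R$ pair in $L(\famH)\cup R(\famH)$. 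Choosing $f(\alpha,\beta)$ to absorb both the non-spanning factor $2(\beta+1)(\alpha+1)$ and the spanning contribution, and verifying that the between-condition survives the combined construction, is the central technical difficulty of the proof.
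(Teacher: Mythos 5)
Your treatment of the non-spanning curves (both basepoints in the same block $J_i$) is correct and coincides with the first case of the paper's proof, and the greedy block construction is fine. But the spanning curves are not a deferrable technicality: they are the actual content of the lemma, and your proposal stops exactly where the real work begins. The ``home gap'' pigeonhole you sketch does not work as stated. There are $n$ gaps, a number not bounded in terms of $\alpha$, $\beta$, $\xi$, so pigeonholing over home gaps does not by itself yield a bounded number of classes with a chromatic-number guarantee; and however you group gaps by congruence, two spanning curves with different home gaps can intersect while the right basepoint of one and the left basepoint of the other lie in the same or in adjacent blocks, so the required conclusion $\chi(\famF(x,y))>\beta$ fails for that pair. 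Lemma~\ref{lem:nested} alone cannot repair this: it only bounds the chromatic number of the curves whose interval $I(c)$ contains one fixed gap.

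The paper splits the spanning curves into two cases and uses a different device for each, and both are absent from your sketch. For curves whose basepoints lie in adjacent blocks ($\famF_{i,i+1}$ in the paper's notation), Lemma~\ref{lem:nested} indeed gives $\chi(\famF_{i,i+1})\leq 4\xi+4$ (this is the part of your idea that survives), and the point is to reuse the same $4\xi+4$ colors on $\famF_{i,i+1}$ and $\famF_{r,r+1}$ exactly when $i\equiv r\pmod{3}$, which forces intersecting pairs within a color class to come from blocks at distance at least $2$. For curves whose basepoints lie in blocks at distance at least $2$, no congruence trick can work, since the pair of blocks $(i,j)$ is arbitrary; the paper instead restricts to left-block indices of one parity and properly $3$-colors an auxiliary graph on these indices whose edges record the existence of a curve from block $i$ to block $j-1$ or $j$. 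The crucial geometric input, which your proposal never invokes, is that in an $LR$-family no two curves intersect below the baseline, so the intervals $I(c)$ of two curves cannot cross; this forbids crossing edges, makes the auxiliary graph outerplanar and hence $3$-colorable, and also excludes the remaining bad configuration $j=r+1$. Without these two arguments, or a substitute for them, the subfamily $\famH$ you produce need not satisfy the between-condition, so the proof as proposed has a genuine gap.
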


\begin{proof}
Let $f(\alpha,\beta)=(2\beta+12\xi+20)\alpha$.
Let $\famF$ be a $\xi$-family with $\chi(\famF)>f(\alpha,\beta)$.
Construct a sequence of points $p_0\prec\cdots\prec p_{n+1}$ on the baseline with the following properties:
\begin{itemize}
\item the points $p_0,\ldots,p_{n+1}$ are distinct from all basepoints of $2$-curves in $\famF$,
\item $p_0$ lies to the left of and $p_{n+1}$ lies to the right of all basepoints of $2$-curves in $\famF$,
\item $\chi(\famF(p_ip_{i+1}))=\beta+1$ for $0\leq i<n$ and $\chi(\famF(p_np_{n+1}))\leq\beta+1$.
\end{itemize}
This is done greedily, by first choosing $p_1$ so that $\chi(\famF(p_0p_1))=\beta+1$, then choosing $p_2$ so that $\chi(\famF(p_1p_2))=\beta+1$, and so on.
For $0\leq i\leq j\leq n$, let $\famF_{i,j}=\{c\in\famF\colon p_i\prec L(c)\prec p_{i+1}$ and $p_j\prec R(c)\prec p_{j+1}\}$.
In particular, $\famF_{i,i}=\famF(p_ip_{i+1})$ for $0\leq i\leq n$.
Since $\famF=\bigcup_{0\leq i\leq j\leq n}\famF_{i,j}$ and $\chi(\famF)\geq(2\beta+12\xi+20)\alpha$, at least one of the following inequalities holds:
\begin{equation*}
\textstyle\chi\bigl(\bigcup_{i=0}^n\famF_{i,i}\bigr)>(2\beta+2)\alpha,\quad\chi\bigl(\bigcup_{i=0}^{n-1}\famF_{i,i+1}\bigr)>(12\xi+12)\alpha,\quad\chi\bigl(\bigcup_{i=0}^{n-2}\bigcup_{j=i+2}^n\famF_{i,j}\bigr)>6\alpha.
\end{equation*}
In each case, we claim we can find a subfamily $\famH\subseteq\famF$ such that any two intersecting $1$-curves $x\in R(\famH)$ and $y\in L(\famH)$ satisfy $x\in R(\famF_{i,j})$ and $y\in L(\famF_{r,s})$, where $0\leq i\leq j\leq n$, $0\leq r\leq s\leq n$, and $\lvert j-r\rvert\geq 2$.
Then, we have $\chi(\famF(x,y))\geq\chi(\famF(\smash[b]{p_{\min(j,r)+1}p_{\max(j,r)}}))\geq\beta+1$, as required.

Suppose $\chi\bigl(\bigcup_{i=0}^n\famF_{i,i}\bigr)>(2\beta+2)\alpha$.
We have $\chi(\famF_{i,i})\leq\beta+1$ for $0\leq i\leq n$.
Color the $2$-curves in each $\famF_{i,i}$ properly using the same set of $\beta+1$ colors on $\famF_{i,i}$ and $\famF_{r,r}$ whenever $i\equiv r\pmod{2}$, thus partitioning the family $\bigcup_{i=0}^n\famF_{i,i}$ into $2\beta+2$ color classes.
Since $\chi\bigl(\bigcup_{i=0}^n\famF_{i,i}\bigr)>(2\beta+2)\alpha$, at least one such color class $\famH\subseteq\bigcup_{i=0}^n\famF_{i,i}$ satisfies $\chi(\famH)>\alpha$.
To conclude, for any two intersecting $1$-curves $x\in R(\famH)$ and $y\in L(\famH)$, we have $x\in R(\famF_{i,i})$ and $y\in L(\famF_{r,r})$ for some distinct indices $i,r\in\{0,\ldots,n\}$ such that $i\equiv r\pmod{2}$ and thus $\lvert i-r\rvert\geq 2$.

Now, suppose $\chi\bigl(\bigcup_{i=0}^{n-1}\famF_{i,i+1}\bigr)>(12\xi+12)\alpha$.
By Lemma~\ref{lem:nested}, we have $\chi(\famF_{i,i+1})\leq 4\xi+4$ for $0\leq i\leq n-1$.
Color the $2$-curves in every $\famF_{i,i+1}$ properly using the same set of $4\xi+4$ colors on $\famF_{i,i+1}$ and $\famF_{r,r+1}$ whenever $i\equiv r\pmod{3}$, thus partitioning the family $\bigcup_{i=0}^{n-1}\famF_{i,i+1}$ into $12\xi+12$ color classes.
At least one such color class $\famH$ satisfies $\chi(\famH)>\alpha$.
To conclude, for any two intersecting $1$-curves $x\in R(\famH)$ and $y\in L(\famH)$, we have $x\in R(\famF_{i,i+1})$ and $y\in L(\famF_{r,r+1})$ for some distinct indices $i,r\in\{0,\ldots,n-1\}$ such that $i\equiv r\pmod{3}$ and thus $\lvert i+1-r\rvert\geq 2$.

Finally, suppose $\chi\bigl(\bigcup_{i=0}^{n-2}\bigcup_{j=i+2}^n\famF_{i,j}\bigr)>6\alpha$.
It follows that $\chi\bigl(\bigcup_{i\in I}\bigcup_{j=i+2}^n\famF_{i,j}\bigr)>3\alpha$, where $I=\{i\in\{0,\ldots,n-2\}\colon i\equiv 0\pmod{2}\}$ or $I=\{i\in\{0,\ldots,n-2\}\colon i\equiv 1\pmod{2}\}$.
Consider an auxiliary graph $G$ with vertex set $I$ and edge set $\{ij\colon i,j\in I$, $i<j$, and $\famF_{i,j-1}\cup\famF_{i,j}\neq\emptyset\}$.
If there were two edges $i_1j_1$ and $i_2j_2$ in $G$ with $i_1<i_2<j_1<j_2$, then their witnessing $2$-curves, one from $\smash[b]{\famF_{i_1,j_1-1}}\cup\smash[b]{\famF_{i_1,j_1}}$ and the other from $\smash[b]{\famF_{i_2,j_2-1}}\cup\smash[b]{\famF_{i_2,j_2}}$, would intersect below the baseline, which is impossible.
This shows that $G$ is an outerplanar graph, and thus $\chi(G)\leq 3$.
Fix a proper $3$-coloring of $G$, and use the color of $i$ on every $2$-curve in $\bigcup_{j=i+2}^n\famF_{i,j}$ for every $i\in I$, partitioning the family $\bigcup_{i\in I}\bigcup_{j=i+2}^n\famF_{i,j}$ into $3$ color classes.
At least one such color class $\famH$ satisfies $\chi(\famH)>\alpha$.
To conclude, for any two intersecting $1$-curves $x\in R(\famH)$ and $y\in L(\famH)$, we have $x\in R(\famF_{i,j})$ and $y\in L(\famF_{r,s})$ for some indices $i,r\in I$, $j\in\{i+2,\ldots,n\}$, and $s\in\{r+2,\ldots,n\}$ such that $j\notin\{r-1,r\}$ (otherwise $ir$ would be an edge of $G$), $j\neq r+1$ (otherwise two $2$-curves, one from $\famF_{i,r+1}$ and one from $\famF_{r,s}$, would intersect below the baseline), and thus $\lvert j-r\rvert\geq 2$.
\end{proof}

Lemma~2 in \cite{RW} asserts that for every family of $1$-curves $\famS$ with at least one intersecting pair, there are a cap-curve $\gamma$ and a subfamily $\famT\subseteq\famS$ with $\chi(\famT)\geq\chi(\famS)/2$ such that every $1$-curve in $\famT$ is entirely contained in $\int\gamma$ and intersects some $1$-curve in $\famS$ that intersects $\gamma$ (equivalently, $\ext\gamma$).
The proof follows a standard idea, originally due to Gyárfás \cite{Gya85}, to choose $\famT$ as one of the sets of $1$-curves at a fixed distance from an appropriately chosen $1$-curve in the intersection graph of $\famS$.
However, this method fails to imply an analogous statement for $2$-curves.
We will need a more powerful tool---part of the recent series of works on induced subgraphs that must be present in graphs with sufficiently large chromatic number.

\begin{theorem}[Chudnovsky, Scott, Seymour {\cite[Theorem 1.8]{CSS}}]
\label{thm:CSS}
There is a function\/ $f\colon\setN\to\setN$ with the following property: for every\/ $\alpha\in\setN$, every string graph\/ $G$ with\/ $\chi(G)>f(\alpha)$ contains a vertex\/ $v$ such that\/ $\chi(G^2_v)>\alpha$, where\/ $G^2_v$ denotes the subgraph of\/ $G$ induced on the vertices within distance at most\/ $2$ from\/ $v$.
\end{theorem}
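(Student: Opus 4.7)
My plan is to prove the contrapositive: if $\chi(G^2_v) \le \alpha$ for every vertex $v$ of a string graph $G$, then $\chi(G) \le f(\alpha)$ for some function $f$. The approach combines the classical BFS-layering reduction with the sublinear balanced separator theorem for string graphs.

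First, I would apply the standard Gyárfás BFS trick. Fix an arbitrary root $r \in V(G)$ and let $L_0, L_1, \dots$ be the BFS layers. Since every edge of $G$ lies inside a single layer or between two consecutive layers, a proper $c$-coloring of each $G[L_i]$ lifts to a proper $2c$-coloring of $G$ by assigning disjoint palettes to even-indexed and odd-indexed layers. This reduces the task to uniformly bounding $\chi(G[L_i])$ in terms of $\alpha$.

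To attack $\chi(G[L_i])$, I would iterate the idea inside one layer. Pick a root $u_0 \in L_i$ and run a secondary BFS inside $G[L_i]$, yielding secondary layers $L_i^0, L_i^1, \dots$. Because distance inside $G[L_i]$ is at least the corresponding distance inside $G$, the union $L_i^0 \cup L_i^1 \cup L_i^2$ is contained in $G^2_{u_0}$ and hence has chromatic number at most $\alpha$. A second parity step then reduces the problem to bounding each $\chi(G[L_i^j])$ individually.

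The main obstacle, and the technical heart of the argument, is controlling $G[L_i^j]$ for $j \ge 3$, where the 2-ball assumption no longer applies directly. At this point I would invoke string-graph-specific structure: each $G[L_i^j]$ is itself a string graph, so by the Matoušek--Fox--Pach sublinear separator theorem it admits a balanced separator of size $O(\sqrt{|E|}\log|E|)$. Iterating separator-based decomposition while tracking how the hypothesis on 2-balls restricts to each piece should give a recursive bound on $\chi(G[L_i^j])$. The delicate step is preserving the 2-ball hypothesis across the recursion and controlling the blow-up of the resulting function: one has to argue that a separator in a secondary layer of a BFS of a layer of $G$ still interacts with the ambient 2-ball structure of $G$ in a usable way. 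This quantitative coupling between a geometric (separator) decomposition and a combinatorial (BFS) one is what I expect to drive the growth rate of $f(\alpha)$ and is presumably the reason the CSS paper does not supply an explicit bound.
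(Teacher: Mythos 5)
This statement is not proved in the paper at all: it is quoted verbatim from Chudnovsky, Scott and Seymour \cite{CSS}, where its proof is a long argument built on their ``chandelier'' machinery; the present paper even remarks that no explicit bound on $f$ is available from \cite{CSS}. So your sketch has to stand on its own, and it does not. The two BFS layerings are fine as far as they go (edges of $G$ stay within a layer or join consecutive layers, and $L_i^0\cup L_i^1\cup L_i^2$ indeed lies in the $2$-ball of $u_0$), but after them you are left with bounding $\chi(G[L_i^j])$ for $j\geq 3$, and this is a problem of exactly the same form as the one you started with: the hypothesis ``$\chi(G^2_v)\leq\alpha$ for every $v$'' is inherited by \emph{every} induced subgraph, so nothing has decreased and no progress measure has been identified. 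The reduction is circular unless a genuinely new idea enters at this point, and that is precisely where your argument stops being an argument.

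The new idea you propose --- recursing on the Matoušek/Fox--Pach balanced separators --- cannot close this gap as described. Separator-based decompositions bound the chromatic number in terms of the number of vertices (this is exactly how the $(\log n)^{O(\log\omega)}$ bound of Fox and Pach is obtained): at each level of the recursion one must spend colors on the separator, whose size is a function of $n$, and summing over $\Theta(\log n)$ levels inevitably produces a bound growing with $n$, not a bound depending only on $\alpha$. You give no mechanism by which the local hypothesis on $2$-balls would absorb the separator contributions, and ``tracking how the hypothesis restricts to each piece'' is vacuous since, as noted, the hypothesis restricts to every induced subgraph for free. If such a short coupling of BFS layers with separators worked, it would moreover yield an explicit, effective $f$, which is at odds with the fact that the known proof of this theorem is substantially harder and non-explicit. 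As it stands, the proposal identifies the difficulty correctly but does not overcome it.
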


The special case of Theorem~\ref{thm:CSS} for triangle-free intersection graphs of curves any two of which intersect in at most one point was proved earlier by McGuinness \cite[Theorem 5.3]{McG01}.

\begin{figure}[t]
\centering
\begin{tikzpicture}[scale=.9,yscale=.7]
  \draw[dotted,red,fill=red!10] plot[smooth,tension=.7] coordinates {(5.4,0) (5.6,1) (6.4,3) (4.6,3.6) (2.8,1.6) (2,2.2) (3.6,4.8) (7.4,5.2) (13.6,5.2) (14,0)};
  \draw[red] plot[smooth,tension=.75] coordinates {(1.9,0.8) (2.2,-0.6) (3.1,-0.5) (3.4,1.5) (2.6,3.4) (0.6,4)};
  \draw[red] plot[smooth,tension=.7] coordinates {(4.2,2.2) (5.4,1.8) (7.7,1.4) (8.5,-1.4) (10.5,-2.4) (12.7,-1.3) (12.8,2.2) (11.1,4.9)};
  \draw[name path=c0,blue] plot[smooth,tension=.75] coordinates {(1.6,4.8) (1,-0.6) (3.8,-1.2) (5,2.9)};
  \draw[name path=b] (0,0)--(15,0);
  \draw[name path=c1] plot[smooth,tension=.65] coordinates {(6.4,0.8) (6.4,-0.6) (7.2,-0.6) (7.6,1.3) (8.7,2.1) (10.5,1.9) (11.1,2.8) (10.5,4)};
  \draw[name path=c2] plot[smooth,tension=.75] coordinates {(2.6,2.2) (4.7,4.5) (8.3,3.5) (9.1,-1) (11.6,-1.1) (11.9,2.2)};
  \draw[name path=c3] plot[smooth,tension=.75] coordinates {(10.9,0.9) (10.8,-0.7) (9.7,-0.4) (10,2.2) (11.8,3.8) (12.6,4.6)};
  \path[decorate,decoration={markings,mark=at position .5 with {\node[below] {$c^\star$};}},intersection segments={of=c0 and b,sequence=L2}];
  \path[decorate,decoration={markings,mark=at position .5 with {\node[below] {$c_1$};}},intersection segments={of=c1 and b,sequence=L2}];
  \path[decorate,decoration={markings,mark=at position .6 with {\node[below left] {$c_2$};}},intersection segments={of=c2 and b,sequence=L1}];
  \path[decorate,decoration={markings,mark=at position .15 with {\node[right] {$c_3$};}},intersection segments={of=c3 and b,sequence=L3}];
  \node[above] at (7.4,5.2) {$\gamma$};
  \node at (9.4,4.4) {$\int\gamma$};
\end{tikzpicture}
\caption{Illustration for Lemma~\ref{lem:supports}: $\famG=\{c_1,c_2,c_3\}$}
\label{fig:supports}
\end{figure}

\begin{lemma}[see Figure~\ref{fig:supports}]
\label{lem:supports}
For every\/ $\xi\in\setN$, there is a function\/ $f\colon\setN\to\setN$ with the following property: for every\/ $\alpha\in\setN$ and every\/ $\xi$-family\/ $\famF$ with\/ $\chi(\famF)>f(\alpha)$, there are a cap-curve\/ $\gamma$ and a subfamily\/ $\famG\subseteq\famF$ with\/ $\chi(\famG)>\alpha$ such that every\/ $2$-curve\/ $c\in\famG$ satisfies\/ $L(c),R(c)\subset\int\gamma$ and intersects some\/ $2$-curve in\/ $\famF$ that intersects\/ $\ext\gamma$.
\end{lemma}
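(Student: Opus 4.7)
Plan. The plan is to deploy Theorem~\ref{thm:CSS} on the intersection graph of $\famF$ and then to carry out a topological separation argument. I would set $f(\alpha) := f_0(6\alpha + \xi + 1)$, where $f_0\colon\setN\to\setN$ is the function from Theorem~\ref{thm:CSS}. When $\chi(\famF) > f(\alpha)$, Theorem~\ref{thm:CSS} yields a $2$-curve $c^\star \in \famF$ with $\chi(G^2_{c^\star}) > 6\alpha + \xi + 1$, where $G$ is the intersection graph of $\famF$. Writing $N_1$ for the neighborhood of $c^\star$ in $G$, the $\xi$-family hypothesis gives $\chi(N_1) \leq \xi$, so the set $D_2$ of $2$-curves at distance exactly $2$ from $c^\star$ satisfies $\chi(D_2) > 6\alpha$. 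Crucially, curves of $D_2$ are disjoint from $c^\star$, and in particular from $L(c^\star) \cup R(c^\star)$.

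Next I would partition $D_2$ according to which of the three baseline intervals $(-\infty, p^\star_L)$, $(p^\star_L, p^\star_R)$, $(p^\star_R, \infty)$ contains each of the two basepoints of each curve (where $p^\star_L \prec p^\star_R$ are the basepoints of $c^\star$), producing at most six subfamilies. By pigeonhole, one subfamily $\famG$ has $\chi(\famG) > \alpha$. I would target the three \emph{clean} cases in which both basepoints of every $c \in \famG$ lie in the same one of the three intervals. In those cases the cap-curve $\gamma$ is constructed as follows: choose the baseline endpoints of $\gamma$ in the relevant interval so as to flank every basepoint of $\famG$ while excluding $p^\star_L$ and $p^\star_R$; then route $\gamma$ in $H^+$ to pass above every curve of $\famG$ (possible since $\bigcup_{c\in\famG}(L(c)\cup R(c))$ is compact) and to remain disjoint from $L(c^\star)\cup R(c^\star)$ (possible since these two sets are disjoint compact subsets of $H^+$ with basepoints on disjoint baseline intervals). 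Then $L(c), R(c)\subset\int\gamma$ for every $c\in\famG$; $L(c^\star), R(c^\star)\subset\ext\gamma$ since their basepoints lie outside $\gamma$'s span and they do not cross $\gamma$; and for each $c\in\famG$, the witnessing $c'\in N_1$ meets $c^\star$ and hence, by the $LR$-property, meets $L(c^\star)\cup R(c^\star)\subseteq\ext\gamma$, supplying the required $2$-curve intersecting $\ext\gamma$.

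The main obstacle is the three \emph{mixed} pigeonhole cases, in which the dominating subfamily has basepoints that straddle $p^\star_L$ or $p^\star_R$; then $\gamma$'s baseline span is forced to contain one or both of these basepoints, so $L(c^\star)\cup R(c^\star)$ is no longer automatically in $\ext\gamma$. To handle these I would either perform a second round of pigeonhole using basepoints of $N_1$-curves to control where witnesses $c'$ approach $L(c^\star)\cup R(c^\star)$, or construct $\gamma$ so as to cross $L(c^\star)$ and $R(c^\star)$ at heights chosen so that every witness $c'\in N_1$ is still forced to extend into $\ext\gamma$. Either route amounts to careful geometric bookkeeping rather than a new combinatorial insight, and the ultimate value of $f(\alpha)$ absorbs the constant-factor losses from this case analysis into a single appeal to Theorem~\ref{thm:CSS}.
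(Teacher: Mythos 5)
Your overall route (apply Theorem~\ref{thm:CSS}, discard the distance-$1$ neighborhood using the $\xi$-family property, pigeonhole the distance-$2$ curves by the position of their basepoints relative to those of $c^\star$, then enclose the winning subfamily by a cap-curve and use the distance-$1$ neighbors as witnesses reaching $\ext\gamma$) is exactly the paper's, and your treatment of the three ``clean'' cases and of the witnesses is fine. The gap is in what you defer as ``careful geometric bookkeeping.'' First, you miss a structural fact that kills two of your three mixed cases outright: in an $LR$-family of $2$-curves the middle parts $M(c)$ lie strictly below the baseline (except at their endpoints) and are pairwise disjoint, so the intervals $I(c)$ are pairwise nested or disjoint; hence no curve of $\famF$ can have exactly one basepoint strictly between the basepoints of $c^\star$, and the cases ``straddling $p^\star_L$ only'' or ``straddling $p^\star_R$ only'' are empty.

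The remaining mixed case, however, is genuinely present and is not resolved by either of your suggested fixes: curves $c$ with $L(c)\prec L(c^\star)\prec R(c^\star)\prec R(c)$, nested over $c^\star$. Any cap-curve $\gamma$ with $L(c),R(c)\subset\int\gamma$ for such curves must have its feet flanking $I(c^\star)$, so the basepoints of $c^\star$ lie strictly between them and $L(c^\star),R(c^\star)$ emanate from $\int\gamma$; the distance-$1$ witnesses meet $c^\star$ possibly deep inside $\int\gamma$ and may themselves be entirely contained in $\int\gamma$, so neither ``a second round of pigeonhole on $N_1$'' nor ``crossing $L(c^\star)$ and $R(c^\star)$ at chosen heights'' obviously produces the required configuration. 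The paper's resolution is combinatorial, not geometric: since all intervals $I(c)$ of this nested subfamily contain $I(c^\star)$, Lemma~\ref{lem:nested} bounds its chromatic number by $4\xi+4$, so it is simply discarded before the pigeonhole (whence the paper's threshold $3\alpha+5\xi+5$ rather than your $6\alpha+\xi+1$). Without invoking Lemma~\ref{lem:nested} (or an equivalent bound for $\xi$-families with a common point in all intervals), your proof does not go through.
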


\begin{proof}
Let $f(\alpha)=f_1(3\alpha+5\xi+5)$, where $f_1$ is the function claimed by Theorem~\ref{thm:CSS}.
Let $\famF$ be a $\xi$-family with $\chi(\famF)>f(\alpha)$.
It follows that there is a $2$-curve $c^\star\in\famF$ such that the family of curves within distance at most $2$ from $c^\star$ in the intersection graph of $\famF$ has chromatic number greater than $3\alpha+5\xi+5$.
For $k\in\{1,2\}$, let $\famF_k$ be the $2$-curves in $\famF$ at distance exactly $k$ from $c^\star$ in the intersection graph of $\famF$.
We have $\chi(\{c^\star\}\cup\famF_1\cup\famF_2)>3\alpha+5\xi+5$ (by Theorem~\ref{thm:CSS}) and $\chi(\famF_1)\leq\xi$ (because $\famF$ is a $\xi$-family), so $\chi(\famF_2)>3\alpha+4\xi+4$.
We have $\famF_2=\famG_1\cup\famG_2\cup\famG_3\cup\famG_4$, where
\begin{align*}
\famG_1&=\{c\in\famF_2\colon L(c)\prec R(c)\prec L(c^\star)\prec R(c^\star)\}, & \famG_2&=\{c\in\famF_2\colon L(c^\star)\prec L(c)\prec R(c)\prec R(c^\star)\}, \\
\famG_3&=\{c\in\famF_2\colon L(c^\star)\prec R(c^\star)\prec L(c)\prec R(c)\}, & \famG_4&=\{c\in\famF_2\colon L(c)\prec L(c^\star)\prec R(c^\star)\prec R(c)\}.
\end{align*}
Since $\chi(\famF_2)>3\alpha+4\xi+4$ and $\chi(\famG_4)\leq 4\xi+4$ (by Lemma~\ref{lem:nested}), we have $\chi(\famG_k)>\alpha$ for some $k\in\{1,2,3\}$.
Since neither basepoint of $c^\star$ lies on the segment $I(\famG_k)$, there is a cap-curve $\gamma$ with $L(c^\star),R(c^\star)\subset\ext\gamma$ and $L(c),R(c)\subset\int\gamma$ for all $c\in\famG_k$.
The lemma follows with $\famG=\famG_k$.
\end{proof}

\subsection*{Reduction to \texorpdfstring{$(\xi,h)$}{(ξ,h)}-families}

For $\xi\in\setN$ and a function $h\colon\setN\to\setN$, a \emph{$(\xi,h)$-family} is a $\xi$-family $\famF$ with the following additional property: for every $\alpha\in\setN$ and every subfamily $\famG\subseteq\famF$ with $\chi(\famG)>h(\alpha)$, there is a subfamily $\famH\subseteq\famG$ with $\chi(\famH)>\alpha$ such that every $2$-curve in $\famF$ with a basepoint on $I(\famH)$ has both basepoints on $I(\famG)$.
We will prove the following lemma.

\begin{lemma}
\label{lem:(xi,h)}
For any\/ $\xi,k\in\setN$ and any function\/ $h\colon\setN\to\setN$, there is a constant\/ $\zeta\in\setN$ such that every\/ $(\xi,h)$-family\/ $\famF$ with\/ $\omega(\famF)\leq k$ satisfies\/ $\chi(\famF)\leq\zeta$.
\end{lemma}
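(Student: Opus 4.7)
My plan is to prove Lemma~\ref{lem:(xi,h)} by induction on $k$. The base case $k=1$ is immediate since $\omega(\famF) \leq 1$ forces $\chi(\famF) \leq 1$. The key observation that makes the induction work is that any subfamily $\famF_0 \subseteq \famF$ of a $(\xi, h)$-family is itself a $(\xi, h)$-family: the $\xi$-condition is clearly inherited, and the encompassing condition is inherited because the witness produced by $\famF$ for any $\famG \subseteq \famF_0 \subseteq \famF$ a fortiori satisfies the condition required of $\famF_0$ (the quantifier ``every $2$-curve in $\famF$'' being strictly stronger than ``every $2$-curve in $\famF_0$''). Consequently, if I can exhibit a $2$-curve $c^\star \in \famF$ and a subfamily $\famH^\star \subseteq \famF$ of chromatic number strictly greater than $\zeta_{k-1}$ (the induction's constant for clique number at most $k-1$ with the same function $h$), every member of which intersects $c^\star$, then $\famH^\star$ is a $(\xi, h)$-family of clique number at most $k-1$ (as any clique in $\famH^\star$ together with $c^\star$ is a clique in $\famF$), and the induction hypothesis yields the desired contradiction. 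It therefore suffices to produce such $c^\star$ and $\famH^\star$ whenever $\chi(\famF)$ exceeds a threshold $\zeta$ depending only on $\xi$, $k$, $h$.

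The extraction proceeds via three cascading reductions. First, Lemma~\ref{lem:supports} with a very large parameter produces a cap-curve $\gamma$ and $\famG \subseteq \famF$ with $\chi(\famG)$ enormous, each $c \in \famG$ satisfying $L(c), R(c) \subset \int \gamma$ and meeting some $2$-curve of $\famF$ that crosses into $\ext \gamma$. Second, the $(\xi, h)$-property applied to $\famG$ (with the parameter from the first step chosen larger than $h$ of the target) yields $\famH \subseteq \famG$ with $\chi(\famH)$ still large and such that every $2$-curve in $\famF$ with a basepoint on $I(\famH)$ has both basepoints on $I(\famG) \subseteq I(\gamma)$. Third, for each $c \in \famH$ I select a support $c'_c \in \famF$ that crosses $\ext \gamma$ and meets $c$. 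By the encompassing property, $c'_c$ either has both basepoints on $I(\famG)$ (``inner'' supports) or has no basepoint on $I(\famH)$ (``outer'' supports, with both basepoints outside $I(\famH)$ and further classifying as left, right, or straddling). A case analysis on the support type, combined with Lemma~\ref{lem:between} to guarantee large chromatic number between consecutive basepoint sub-intervals, together with a planarity-style argument in the spirit of Lemma~\ref{lem:planar} (exploiting that the $LR$-structure forbids $M$-$M$ crossings below the baseline, and thereby sharply constraining how supports of different types can interleave), would let me identify a single fixed $c^\star$ meeting a subfamily $\famH^\star \subseteq \famH$ with $\chi(\famH^\star) > \zeta_{k-1}$.

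The step I expect to be the main obstacle is exactly this final consolidation into a single common support $c^\star$. A priori the supports $c'_c$ are all distinct, and the $\xi$-family hypothesis only bounds the chromatic number of the curves intersecting \emph{one} fixed support. The argument must therefore navigate between the two regimes of many distinct supports (where planar and outerplanar constraints on their basepoint arrangement, combined with the prohibition on below-baseline crossings, force structural simplifications) and few supports (where the $\xi$-family property enters directly), and glue them via pigeonhole in such a way that the chromatic number of the final $\famH^\star$ remains above $\zeta_{k-1}$. Arranging the parameters so that every chromatic threshold is overshot in every branch of the case analysis, while keeping $\zeta$ a function only of $\xi$, $k$, and $h$, is the chief bookkeeping burden of the proof.
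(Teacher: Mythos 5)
Your plan has a structural flaw that no amount of bookkeeping can repair: the configuration you aim to extract cannot exist in a $\xi$-family once the parameters are what the induction forces them to be. By the very definition of a $\xi$-family, for every fixed $c^\star\in\famF$ the family of $2$-curves of $\famF$ intersecting $c^\star$ has chromatic number at most $\xi$. Hence a subfamily $\famH^\star$ all of whose members meet a single $c^\star$ satisfies $\chi(\famH^\star)\leq\xi$, and your induction step needs $\chi(\famH^\star)>\zeta_{k-1}$, which is only conceivable if $\zeta_{k-1}<\xi$ --- something you cannot arrange (already for triangle-free $LR$-families, which are $1$-families, the optimal constant exceeds $\xi=1$). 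The deeper point is that the induction on clique number you are attempting has, in this paper, already been performed one level up (in the derivation of Lemma~\ref{lem:2-curves} from Lemma~\ref{lem:xi}); the $\xi$-family hypothesis \emph{is} the distilled induction hypothesis ``the neighborhood of any single curve has bounded chromatic number,'' so trying to re-run a neighborhood-based induction on $k$ inside Lemma~\ref{lem:(xi,h)} is self-defeating: the neighborhoods are already known to be small, and that is exactly why no high-chromatic subfamily can be confined to one. (Your preliminary observation that the $(\xi,h)$-property is hereditary to subfamilies is correct, but it does not rescue the strategy.)

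The paper uses $\omega(\famF)\leq k$ in a completely different way and only at the very end. It defines chains of pairs of $2$-curves and proves (Lemma~\ref{lem:chain}, by induction on the \emph{length of the chain}, not on $k$) that every $(\xi,h)$-family of sufficiently large chromatic number contains a long chain; a chain of length $2k+1$ contains, by pigeonhole on the two ``types,'' $k+1$ curves that pairwise intersect, contradicting $\omega(\famF)\leq k$. The clique is thus accumulated one curve at a time: each extension step uses three applications of Lemma~\ref{lem:skeleton} to get two nested skeletons of the same type, Lemma~\ref{lem:between} to keep a high-chromatic subfamily squeezed between the basepoints of the last pair, and the $(\xi,h)$-property to ensure the new supporting curves have their basepoints in the right place. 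This is precisely the mechanism that replaces the ``consolidation into a single common support'' that you yourself identify as the main obstacle and for which your sketch offers no argument; with your target configuration ruled out above, that gap is fatal to the proposal as it stands.
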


The notion of a $(\xi,h)$-family and Lemma~\ref{lem:(xi,h)} provide a convenient abstraction of what is needed to prove the next lemma and then to derive Lemma~\ref{lem:xi} from the next lemma.

\begin{lemma}
\label{lem:encompass}
For any\/ $\xi,k\in\setN$, there is a function\/ $f\colon\setN\to\setN$ such that for every\/ $\alpha\in\setN$, every\/ $\xi$-family\/ $\famF$ with\/ $\omega(\famF)\leq k$ and\/ $\chi(\famF)>f(\alpha)$ contains a\/ $2$-curve\/ $c$ with\/ $\chi(\famF(I(c)))>\alpha$.
\end{lemma}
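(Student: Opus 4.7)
The plan is to prove Lemma~\ref{lem:encompass} by contrapositive, using Lemma~\ref{lem:(xi,h)} as a black box. Fix $\xi,k,\alpha\in\setN$, define $h\colon\setN\to\setN$ by $h(\beta)=\beta+2\alpha+8\xi+8$, and let $\zeta$ be the constant provided by Lemma~\ref{lem:(xi,h)} for the parameters $\xi,k,h$; set $f(\alpha)=\zeta$. Suppose, toward a contradiction, that $\famF$ is a $\xi$-family with $\omega(\famF)\leq k$ and $\chi(\famF)>\zeta$, yet every $c\in\famF$ satisfies $\chi(\famF(I(c)))\leq\alpha$. I will verify that $\famF$ is a $(\xi,h)$-family; Lemma~\ref{lem:(xi,h)} then yields $\chi(\famF)\leq\zeta$, the desired contradiction.

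So fix $\famG\subseteq\famF$ with $\chi(\famG)>h(\beta)$ and write $I(\famG)=[a,b]$. Call $c'\in\famF$ a \emph{left intruder} if its left basepoint lies strictly left of $a$ and its right basepoint lies in $[a,b]$, and define \emph{right intruder} symmetrically. Let $x^*$ be the supremum of right basepoints of left intruders (or $-\infty$ if there are none), and $y^*$ the infimum of left basepoints of right intruders (or $+\infty$ if there are none). The candidate $\famH$ is the subfamily of $\famG$ whose $2$-curves have both basepoints in the open interval $(x^*,y^*)$. Then $I(\famH)\subseteq(x^*,y^*)$, and by the choice of $x^*,y^*$ no intruder has a basepoint on $I(\famH)$; hence every $2$-curve in $\famF$ with a basepoint on $I(\famH)$ has both basepoints on $I(\famG)$, the structural part of the $(\xi,h)$-family condition.

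It remains to show $\chi(\famH)>\beta$. I first verify $y^*>x^*$ (assuming both intruder sets are nonempty; the other cases are easier). Otherwise, choosing left and right intruders $c'_L,c'_R$ realizing $x^*,y^*$ yields $I(c'_L)\supseteq[a,x^*]$, $I(c'_R)\supseteq[y^*,b]$, and $[a,b]\subseteq I(c'_L)\cup I(c'_R)$. Splitting $\famG$ into (i) curves with both basepoints in $[a,x^*]$, (ii) both in $[y^*,b]$, and (iii) the rest, the hypothesis bounds the chromatic numbers of (i) and (ii) by $\alpha$ each (they are contained in $\famF(I(c'_L))$ and $\famF(I(c'_R))$), while curves in (iii) all have $x^*$ in their intervals, so Lemma~\ref{lem:nested} bounds (iii) by $4\xi+4$. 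This gives $\chi(\famG)\leq 2\alpha+4\xi+4<h(\beta)$, a contradiction. So $y^*>x^*$. Next, partitioning $\famG\setminus\famH$ via $[a,x^*]$, $(x^*,y^*)$, $[y^*,b]$: curves with both basepoints in $[a,x^*]$ or in $[y^*,b]$ contribute at most $2\alpha$ by the hypothesis, and the remaining ones have $x^*$ or $y^*$ in their intervals, contributing at most $2(4\xi+4)$ via two applications of Lemma~\ref{lem:nested}. Thus $\chi(\famG\setminus\famH)\leq 2\alpha+8\xi+8$ and $\chi(\famH)\geq\chi(\famG)-(2\alpha+8\xi+8)>\beta$, as needed.

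The main obstacle is conceptual rather than computational: one has to recognize that the negation of the lemma's conclusion is precisely the structural property that the $(\xi,h)$-family framework was engineered to package, so that Lemma~\ref{lem:(xi,h)} can do the heavy lifting. Everything else is careful bookkeeping across the three adjacent intervals $[a,x^*]$, $(x^*,y^*)$, $[y^*,b]$, with Lemma~\ref{lem:nested} controlling the "nested" remainder. Edge cases where $\famF_L$ or $\famF_R$ is empty only simplify these estimates, since the corresponding "end" interval disappears.
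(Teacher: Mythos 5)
Your proof is correct, and its skeleton is the same as the paper's: negate the conclusion, observe that the resulting family is a $(\xi,h_\alpha)$-family for a suitable shift function $h_\alpha$, and let Lemma~\ref{lem:(xi,h)} finish. Where you differ is in how the subfamily $\famH$ witnessing the $(\xi,h)$-condition is extracted. The paper trims two \emph{chromatic} buffers $\famG_L,\famG_R\subseteq\famG$ with $\chi(\famG_L)=\chi(\famG_R)=\alpha+1$, chosen greedily by leftmost $L$-basepoints and rightmost $R$-basepoints, and notes that any curve $c$ with one basepoint on $I(\famH)$ and the other off $I(\famG)$ would force $\famG_L\subseteq\famF(I(c))$ or $\famG_R\subseteq\famF(I(c))$ (this uses that intervals $I(\cdot)$ in an $LR$-family of $2$-curves are nested or disjoint), so $h_\alpha(\beta)=\beta+2\alpha+2$ suffices. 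You instead cut \emph{geometrically} at the extremal reach $x^*,y^*$ of the intruding curves, which makes the structural condition immediate without any nestedness argument, and you pay for the discarded part of $\famG$ by applying the hypothesis $\chi(\famF(I(c)))\leq\alpha$ to the two extremal intruders and Lemma~\ref{lem:nested} twice to the curves straddling $x^*$ or $y^*$, whence your larger shift $h(\beta)=\beta+2\alpha+8\xi+8$ (plus the separate check that $y^*>x^*$, which the paper's route never needs). Both arguments are short and valid; yours trades the nested-or-disjoint geometry for an extra invocation of Lemma~\ref{lem:nested} and slightly worse constants. Two cosmetic points: your choice of intruders ``realizing'' $x^*$ and $y^*$ tacitly assumes the supremum and infimum are attained, which is harmless under the usual finiteness convention the paper itself relies on elsewhere, and the symbols $\famF_L,\famF_R$ in your closing remark are undefined (you mean the sets of left and right intruders).
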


\begin{proof}[Proof of Lemma~\ref{lem:encompass} from Lemma~\ref{lem:(xi,h)}]
For $\alpha\in\setN$, let $h_\alpha\colon\setN\ni\beta\mapsto\beta+2\alpha+2\in\setN$, and let $f(\alpha)$ be the constant claimed by Lemma~\ref{lem:(xi,h)} such that every $(\xi,h_\alpha)$-family $\famF$ with $\omega(\famF)\leq k$ satisfies $\chi(\famF)\leq f(\alpha)$.
Let $\famF$ be a $\xi$-family with $\omega(\famF)\leq k$ and $\chi(\famF(I(c)))\leq\alpha$ for every $c\in\famF$.
We show that $\famF$ is a $(\xi,h_\alpha)$-family, which then implies $\chi(\famF)\leq f(\alpha)$.
To this end, consider a subfamily $\famG\subseteq\famF$ with $\chi(\famG)>h_\alpha(\beta)$ for some $\beta\in\setN$.
Take $\famG_L,\famG_R\subseteq\famG$ greedily so that $L(\famG_L)\prec L(\famG\setminus\famG_L)$, $\chi(\famG_L)=\alpha+1$, $R(\famG\setminus\famG_R)\prec R(\famG_R)$, and $\chi(\famG_R)=\alpha+1$.
Let $\famH=\famG\setminus(\famG_L\cup\famG_R)$.
It follows that $\chi(\famH)\geq\chi(\famG)-\chi(\famG_L)-\chi(\famG_R)>h_\alpha(\beta)-2\alpha-2=\beta$.
If there is a $2$-curve $c\in\famF$ with one basepoint on $I(\famH)$ and the other basepoint not on $I(\famG)$, then $\famG_L\subseteq\famF(I(c))$ or $\famG_R\subseteq\famF(I(c))$, so $\chi(\famF(I(c)))\geq\alpha+1$, which is a contradiction.
Therefore, every $2$-curve in $\famF$ with a basepoint on $I(\famH)$ has both basepoints on $I(\famG)$.
This shows that $\famF$ is a $(\xi,h_\alpha)$-family.
\end{proof}

\begin{proof}[Proof of Lemma~\ref{lem:xi} from Lemma~\ref{lem:(xi,h)}]
Let $h$ be the function claimed by Lemma~\ref{lem:encompass} for $\xi$ and $k$.
Let $\zeta$ be the constant claimed by Lemma~\ref{lem:(xi,h)} for $\xi$, $k$, and $h$.
Let $\famF$ be a $\xi$-family with $\omega(\famF)\leq k$.
We show that $\famF$ is a $(\xi,h)$-family, which then implies $\chi(\famF)\leq\zeta$.
To this end, consider a subfamily $\famG\subseteq\famF$ with $\chi(\famG)>h(\alpha)$ for some $\alpha\in\setN$.
Lemma~\ref{lem:encompass} yields a $2$-curve $c\in\famG$ such that $\chi(\famG(I(c)))>\alpha$.
Every $2$-curve in $\famF$ with a basepoint on $I(c)$ has both basepoints on $I(c)$, otherwise it would intersect $c$ below the baseline.
Therefore, the condition on $\famF$ being a $(\xi,h)$-family is satisfied with $\famH=\famG(I(c))$.
\end{proof}

\subsection*{Dealing with \texorpdfstring{$(\xi,h)$}{(ξ,h)}-families}

The rest of this section is devoted to the proof of Lemma~\ref{lem:(xi,h)}.
Its structure and principal ideas are based on those of the proof of Theorem~\ref{thm:outerstring} presented in \cite{RW}.
For each forthcoming lemma, we provide a reference to its counterpart in \cite{RW}.

A \emph{skeleton} is a pair $(\gamma,\famU)$ such that $\gamma$ is a cap-curve and $\famU$ is a family of pairwise disjoint $1$-curves each of which has one endpoint (other than the basepoint) on $\gamma$ and all the remaining part in $\int\gamma$ (see Figure~\ref{fig:skeleton}).
For a family of $1$-curves $\famS$, a skeleton $(\gamma,\famU)$ is an \emph{$\famS$-skeleton} if every $1$-curve in $\famU$ is a subcurve of some $1$-curve in $\famS$.
A family of $2$-curves $\famG$ is \emph{supported} by a skeleton $(\gamma,\famU)$ if every $2$-curve $c\in\famG$ satisfies $L(c),R(c)\subset\int\gamma$ and intersects some $1$-curve in $\famU$.
A family of $2$-curves $\famH$ is \emph{supported from outside} by a family of $1$-curves $\famS$ if every $2$-curve in $\famH$ intersects some $1$-curve in $\famS$ and every $1$-curve in $\famS$ satisfies $s\prec\famH$ or $\famH\prec s$.

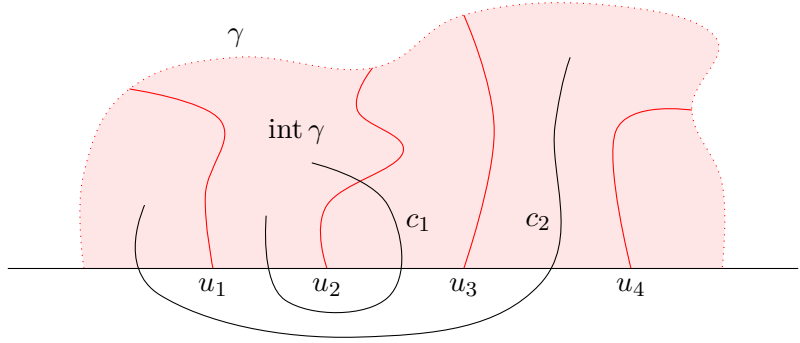
\begin{figure}[t]
\centering
\begin{tikzpicture}[yscale=.7]
  \draw[name path=g,red,dotted,fill=red!10] plot[smooth,tension=.7] coordinates {(1,0) (1,1.8) (1.6,3.4) (3,4) (4.8,3.8) (6,4.8) (7.8,5) (9.3,4.4) (9,3) (9.4,1.6) (9.4,0)};
  \draw[red] plot[smooth,tension=.7] coordinates {(2.7,0) (2.6,1.4) (2.8,2.8) (1.6,3.4)};
  \draw[red] plot[smooth,tension=.7] coordinates {(4.2,0) (4.2,1.2) (5.2,2.2) (4.6,3) (4.8,3.8)};
  \draw[red] plot[smooth,tension=.7] coordinates {(6,0) (6.4,2.6) (6,4.8)};
  \draw[red] plot[smooth,tension=.7] coordinates {(8.2,0) (8,2.6) (9,3)};
  \draw[name path=b] (0,0)--(10.4,0);
  \draw[name path=c1] plot[smooth,tension=.75] coordinates {(3.4,1) (3.65,-0.6) (5,-0.55) (5,1.2) (4,2)};
  \draw[name path=c2] plot[smooth,tension=.75] coordinates {(1.8,1.2) (2,-0.5) (4.6,-1.3) (7,-0.3) (7.2,2.7) (7.4,4)};
  \node[above] at (3,4) {$\gamma$};
  \node[below] at (2.7,0) {$u_1$};
  \node[below] at (4.2,0) {$u_2$};
  \node[below] at (6,0) {$u_3$};
  \node[below] at (8.2,0) {$u_4$};
  \path[decorate,decoration={markings,mark=at position .3 with {\node[right] {$c_1$};}},intersection segments={of=c1 and b,sequence=L3}];
  \path[decorate,decoration={markings,mark=at position .225 with {\node[left] {$c_2$};}},intersection segments={of=c2 and b,sequence=L3}];
  \node at (3.8,2.6) {$\int\gamma$};
\end{tikzpicture}
\caption{A skeleton $\bigl(\gamma,\{u_1,u_2,u_3,u_4\}\bigr)$, which supports $c_1$ but not $c_2$}
\label{fig:skeleton}
\end{figure}

\begin{lemma}[cf.\ {\cite[Lemma~5]{RW}}]
\label{lem:sideways}
For every\/ $\xi\in\setN$ and every function\/ $h\colon\setN\to\setN$, there is a function\/ $f\colon\setN\times\setN\to\setN$ such that for any\/ $\alpha,\beta\in\setN$, every\/ $(\xi,h)$-family\/ $\famF$ with\/ $\chi(\famF)>f(\alpha,\beta)$ contains at least one of the following configurations:
\begin{itemize}
\item a subfamily\/ $\famG\subseteq\famF$ with\/ $\chi(\famG)>\alpha$ supported by an\/ $L(\famF)$-skeleton or an\/ $R(\famF)$-skeleton,\pagebreak
\item a subfamily\/ $\famH\subseteq\famF$ with\/ $\chi(\famH)>\beta$ supported from outside by a family of\/ $1$-curves\/ $\famS$ such~that\/ $\famS\subseteq L(\famF)$ or\/ $\famS\subseteq R(\famF)$.
\end{itemize}
\end{lemma}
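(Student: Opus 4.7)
The plan is to apply Lemma~\ref{lem:supports} with a sufficiently large parameter (a function of $\alpha$ and $\beta$) to obtain a cap-curve $\gamma$ and a subfamily $\famG_0\subseteq\famF$ of large chromatic number such that every $c\in\famG_0$ has $L(c),R(c)\subset\int\gamma$ and is intersected by some $2$-curve $d_c\in\famF$ meeting $\ext\gamma$. Since $M(d_c)$ lies below the baseline while $\ext\gamma\subseteq H^+$, the curve $d_c$ meets $\ext\gamma$ through one of its two $1$-curves, which we call $t_c$; and since $\famF$ is an $LR$-family and both $L(c),R(c)$ lie in $\int\gamma$, the intersection $d_c\cap c$ is realized by a specific $1$-curve $s_c\in\{L(d_c),R(d_c)\}$. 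If $s_c$ has any point in $\ext\gamma$ or on $\gamma$, then one may take $t_c=s_c$, so the only awkward situation is when $s_c$ is contained entirely in $\int\gamma$, which forces $t_c\neq s_c$.

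We then pigeonhole on a constant number of attributes of the triple $(d_c,s_c,t_c)$: which of $\{L,R\}$ each of $s_c$ and $t_c$ is, and whether the basepoint of $t_c$ lies on $I(\gamma)$, to the left of $I(\gamma)$, or to the right of $I(\gamma)$. Passing to a subfamily $\famG_1\subseteq\famG_0$ on which all these attributes are uniform costs only a constant factor in chromatic number. If the basepoint of $t_c$ lies on $I(\gamma)$, then $t_c$ must cross $\gamma$, and truncating $t_c$ at its first crossing with $\gamma$ yields a $1$-curve $u_c\subset\int\gamma$ with top endpoint on $\gamma$ which is a subcurve of an $L(\famF)$- or $R(\famF)$-curve (uniformly by the $t_c$-attribute); the collection $\{u_c\}$ is then the candidate skeleton. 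If instead the basepoint of $t_c$ lies outside $I(\gamma)$, one pigeonholes once more so that all such basepoints lie on the same side of $I(\gamma)$, and sets $\famS=\{t_c\colon c\in\famG_1\}$ together with $\famH=\famG_1$; in the typical case $s_c=t_c$ this witnesses the outside-support configuration directly, while in the awkward case $s_c\neq t_c$ one recovers it by using $s_c$ in place of $t_c$ (since $s_c$ intersects $c$ and, being the $\famF$-partner of $t_c$ in $d_c$, supplies a $1$-curve on the required side).

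The principal obstacle is the skeleton case: extracting from the candidate family $\{u_c\colon c\in\famG_1\}$ an actual pairwise disjoint $\famU$ without destroying the chromatic number of the supported subfamily. We would carry this out by an argument patterned on Lemma~\ref{lem:between}: order the candidates along $\gamma$ by the position of their top endpoint, partition them into sweep-classes so that any two candidates in the same class have nested or disjoint footprints on $I(\gamma)$, and then thin each class further using a Jordan-curve/outerplanarity argument in the spirit of the third case in the proof of Lemma~\ref{lem:between}. At each thinning step the $(\xi,h)$-property of $\famF$ bounds the chromatic-number loss, so starting with $\chi(\famG_0)$ large enough leaves a subfamily $\famG\subseteq\famG_1$ with $\chi(\famG)>\alpha$ supported by the resulting skeleton, completing the case.
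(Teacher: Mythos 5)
Your first step (invoking Lemma~\ref{lem:supports}) coincides with the paper's, but the case analysis you build on the partner curve $d_c$ has genuine gaps. In the skeleton case, truncating $t_c$ at its \emph{first} crossing with $\gamma$ does not yield a curve that $c$ is known to intersect: the intersection of $c$ with $d_c$ lies in $\int\gamma$, but it may lie on a portion of $t_c$ that has already left and re-entered $\int\gamma$, and in the ``awkward'' situation $s_c\neq t_c$ the curve $c$ need not meet $t_c$ at all. So the family $\{u_c\}$ need not support your subfamily. In the outside case, the definition of ``supported from outside'' requires every curve of $\famS$ to have its basepoint to the left or to the right of \emph{all} basepoints of $\famH$; substituting $s_c$ for $t_c$ destroys exactly this, since the basepoint of $s_c$ can lie inside $\int\gamma$ and even on $I(\famH)$. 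Ruling out that configuration is the real content of the lemma, and it is where the $(\xi,h)$-property must be used: the paper takes as skeletons \emph{all} initial segments of $L(\famF)$- resp.\ $R(\famF)$-curves reaching $\gamma$, lets $\famG_L,\famG_R$ be the curves they support, and routes everything else to the outside case, where Lemma~\ref{lem:planar} provides a component $V$ and a cap-curve $\nu\subseteq V$, a Jordan-curve argument shows that no curve of $\famF$ meeting $\ext\gamma$ has both basepoints on $I(\famG'_V)$, and then the $(\xi,h)$-property extracts $\famH'$ so that no such curve has \emph{any} basepoint on $I(\famH')$, which is what legitimizes ``from outside.'' Your sketch never performs this step; the only place you invoke the $(\xi,h)$-property is as a vague bound on ``chromatic-number loss'' during thinning, which is not what that property says.

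Moreover, the step you call the principal obstacle is not an obstacle at all: in an $LR$-family the curves in $L(\famF)$ are pairwise disjoint, as are the curves in $R(\famF)$ (all intersections are between an $L$- and an $R$-curve), so any collection of subcurves drawn entirely from $L(\famF)$, or entirely from $R(\famF)$, is automatically pairwise disjoint and forms a skeleton with no thinning whatsoever. The sweep-class/outerplanarity machinery you propose there is both unnecessary and not actually carried out, while the genuinely delicate part of the proof --- converting the unsupported curves into a family supported from outside --- is missing.
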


\begin{proof}
Let $f(\alpha,\beta)=f_1(2\alpha+h(2\beta)+4)$, where $f_1$ is the function claimed by Lemma~\ref{lem:supports}.
Let $\famF$ be a $(\xi,h)$-family with $\chi(\famF)>f(\alpha,\beta)$.
Apply Lemma~\ref{lem:supports} to obtain a cap-curve $\gamma$ and a subfamily $\famG\subseteq\famF$ with $\chi(\famG)>2\alpha+h(2\beta)+4$ such that every $2$-curve $c\in\famG$ satisfies $L(c),R(c)\subset\int\gamma$ and intersects some $2$-curve in $\famF_{\ext}$.
Here and further on, $\famF_{\ext}$ denotes the family of $2$-curves in $\famF$ that intersect $\ext\gamma$.
Let $\famU_L$ be the $1$-curves that are subcurves of $1$-curves in $L(\famF)$, have one endpoint (other than the basepoint) on $\gamma$, and have all the remaining part in $\int\gamma$.
Let $\famU_R$ be the analogous subcurves of $1$-curves in $R(\famF)$.
Thus $(\gamma,\famU_L)$ is an $L(\famF)$-skeleton, and $(\gamma,\famU_R)$ is an $R(\famF)$-skeleton.
Let $\famG_L$ be the $2$-curves in $\famG$ that intersect some $1$-curve in $\famU_L$, and let $\famG_R$ be those that intersect some $1$-curve in $\famU_R$.
If $\chi(\famG_L)>\alpha$ or $\chi(\famG_R)>\alpha$, then the first conclusion of the lemma holds.
Thus assume $\chi(\famG_L)\leq\alpha$ and $\chi(\famG_R)\leq\alpha$.
Let $\famG'=\famG\setminus(\famG_L\cup\famG_R)$.
It follows that $\chi(\famG')\geq\chi(\famG)-2\alpha>h(2\beta)+4$.

By Lemma~\ref{lem:planar}, the $2$-curves $c\in\famG'$ such that $L(c)$ and $R(c)$ lie in different components of $L(\famG')\cup R(\famG')$ have chromatic number at most $4$.
Therefore, there is a component $V$ of $L(\famG')\cup R(\famG')$ such that $\chi(\famG'_V)\geq\chi(\famG')-4>h(2\beta)$, where $\famG'_V=\{c\in\famG'\colon L(c),R(c)\subseteq V\}$.
There is a cap-curve $\nu\subseteq V$ connecting the two endpoints of the segment $I(\famG'_V)$.
Suppose there is a $2$-curve $c\in\famF_{\ext}$ with both basepoints on $I(\famG'_V)$.
If $L(c)$ intersects $\ext\gamma$, then the part of $L(c)$ from the basepoint to the first intersection point with $\gamma$, which is a $1$-curve in $\famU_L$, intersects $\nu$ (as $\nu\subseteq V\subset\int\gamma$) and thus a curve in $\famG'$ (as $V$ is a component of $\famG'$); this implies $\famG'\cap\famG_L\neq\emptyset$, which is a contradiction.
An analogous contradiction is reached if $R(c)$ intersects $\ext\gamma$.
This shows that no curve in $\famF_{\ext}$ has both basepoints on $I(\famG'_V)$.

Since $\famF$ is a $(\xi,h)$-family and $\chi(\famG'_V)>h(2\beta)$, there is a subfamily $\famH'\subseteq\famG'_V$ such that $\chi(\famH')>2\beta$ and every $2$-curve in $\famF$ with a basepoint on $I(\famH')$ has the other basepoint on $I(\famG'_V)$.
This and the above imply that no curve in $\famF_{\ext}$ has a basepoint on $I(\famH')$.
Since every curve in $\famH'$ intersects some curve in $\famF_{\ext}$, we have $\famH'=\famH_L\cup\famH_R$, where $\famH_L$ are the $2$-curves in $\famH'$ that intersect some $1$-curve in $L(\famF_{\ext})$ and $\famH_R$ are those that intersect some $1$-curve in $R(\famF_{\ext})$.
Since $\chi(\famH')>2\beta$, we conclude that $\chi(\famH_L)>\beta$ or $\chi(\famH_R)>\beta$ and thus the second conclusion of the lemma holds with $(\famH,\famS)=(\famH_L,L(\famF_{\ext}))$ or $(\famH,\famS)=(\famH_R,R(\famF_{\ext}))$.
\end{proof}

\begin{lemma}[cf.\ {\cite[Lemma~8]{RW}}]
\label{lem:skeleton}
For every\/ $\xi\in\setN$ and every function\/ $h\colon\setN\to\setN$, there is a function\/ $f\colon\setN\to\setN$ such that for every\/ $\alpha\in\setN$, every\/ $(\xi,h)$-family\/ $\famF$ with\/ $\chi(\famF)>f(\alpha)$ contains a subfamily\/ $\famG\subseteq\famF$ with\/ $\chi(\famG)>\alpha$ supported by an\/ $L(\famF)$-skeleton or an\/ $R(\famF)$-skeleton.
\end{lemma}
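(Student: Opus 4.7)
The plan is to reduce the second alternative of Lemma~\ref{lem:sideways} (``supported from outside'') to the first (``supported by a skeleton'') via a second application of Lemma~\ref{lem:sideways}. I first apply Lemma~\ref{lem:sideways} to $\famF$ with parameters $(\alpha,\beta)$, for $\beta$ to be chosen sufficiently large in terms of $\alpha$, $\xi$, and $h$. If the first alternative holds, the desired skeleton is produced directly. Otherwise, I obtain a subfamily $\famH\subseteq\famF$ with $\chi(\famH)>\beta$ supported from outside by a family $\famS$; by the left/right and top/bottom symmetry I may assume $\famS\subseteq L(\famF)$ and $\famS\prec\famH$.

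Next I exploit the fact that the $(\xi,h)$-family property is inherited by subfamilies of $\famF$: given $\famG\subseteq\famH\subseteq\famF$ with $\chi(\famG)>h(\alpha')$, the $(\xi,h)$-property of $\famF$ applied to $\famG$ yields $\famH''\subseteq\famG$ with $\chi(\famH'')>\alpha'$ such that every $2$-curve of $\famF$, and therefore of $\famH$, with a basepoint on $I(\famH'')$ has both basepoints on $I(\famG)$. Hence $\famH$ is itself a $(\xi,h)$-family and I may apply Lemma~\ref{lem:sideways} to it. If the first alternative holds for $\famH$, we obtain a subfamily supported by an $L(\famH)$- or $R(\famH)$-skeleton; since $L(\famH)\subseteq L(\famF)$ and $R(\famH)\subseteq R(\famF)$, this is also an $L(\famF)$- or $R(\famF)$-skeleton, and the proof is complete.

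If the second alternative holds for $\famH$ as well, it yields $\famH'\subseteq\famH$ with large chromatic number supported from outside by $\famS'\subseteq L(\famH)$ or $R(\famH)$, giving four sub-cases according to the sides. When $\famS'$ lies on the same side of $\famH'$ as $\famS$, I merge the two supports and iterate, which is affordable because each iteration costs only a fixed factor in $\chi$ controlled by $\beta$. The decisive case is $\famS\prec\famH'\prec\famS'$, in which $\famH'$ is externally supported from both sides, with $1$-curves of $\famF$ reaching inward from the left via $\famS$ and from the right via $\famS'$. Here I propose to take a cap-curve $\gamma$ whose basepoints lie just outside of $\famS$ on the left and just outside of $\famS'$ on the right, looping high enough that $L(\famH')\cup R(\famH')\subset\int\gamma$, and to form $\famU$ from the initial arcs of $s\in\famS$ truncated at their first meeting with $\gamma$; after restricting to those $s$'s that actually cross $\gamma$ and that still witness intersections with $\famH'$, the pair $(\gamma,\famU)$ is by construction an $L(\famF)$-skeleton supporting the remaining subfamily of $\famH'$.

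The main obstacle I anticipate is the final extraction: the $1$-curves in $\famS$ may mutually intersect, whereas $\famU$ must be pairwise disjoint. I expect to overcome this by a planarity-style argument in the spirit of Lemma~\ref{lem:planar}, applied to an auxiliary graph recording which truncated arcs enter $\int\gamma$, combined with a further use of the $(\xi,h)$-property to keep the supported subfamily of $\famH'$ large and its basepoints well-isolated. Verifying that each of these reductions---the double invocation of Lemma~\ref{lem:sideways}, the iteration in the same-side case, and the final extraction of a pairwise-disjoint $\famU$---loses only a bounded factor in $\chi$, so that $\beta$ can indeed be chosen to make everything go through with a subfamily of chromatic number greater than $\alpha$ at the end, is the most delicate bookkeeping part of the argument.
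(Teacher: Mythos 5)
Your first application of Lemma~\ref{lem:sideways}, and the observation that subfamilies of a $(\xi,h)$-family are again $(\xi,h)$-families, are fine, but the plan breaks down at its decisive point. In the two-sided case $\famS\prec\famH'\prec\famS'$ you propose to draw a cap-curve $\gamma$ with basepoints just left of $\famS$ and just right of $\famS'$, high enough that $L(\famH')\cup R(\famH')\subset\int\gamma$, and to form $\famU$ by truncating the curves of $\famS$ at their first crossing with $\gamma$. A skeleton member, however, must be a $1$-curve whose non-basepoint endpoint lies \emph{on} $\gamma$, so the curve has to reach $\gamma$; with your choice of $\gamma$ the basepoints of $\famS$ and all intersection points of curves of $\famS$ with curves of $\famH'$ lie in $\int\gamma$, and nothing forces any $s\in\famS$ to leave $\int\gamma$ at all. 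In general no curve of $\famS$ crosses $\gamma$, so $\famU$ is empty and nothing is supported; and lowering $\gamma$ to force crossings destroys the containment $L(\famH')\cup R(\famH')\subset\int\gamma$. Obtaining a cap-curve that the supporting curves genuinely cross is precisely the hard content of Lemma~\ref{lem:supports} (via Theorem~\ref{thm:CSS}); it cannot be imposed by fiat. Incidentally, the obstacle you do anticipate---mutual crossings inside $\famS$---does not exist: in an $LR$-family the curves of $L(\famF)$ are pairwise disjoint, so the planarity-style repair is aimed at a non-problem while the real one is unaddressed. The ``merge and iterate'' step in the same-side case is also unsupported: each round of Lemma~\ref{lem:sideways} costs a factor in $\chi$, and you give no bound on the number of rounds nor a reason the iteration terminates.

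The paper takes a different route that avoids constructing any new skeleton in the outside-supported case. It argues by contradiction: assuming no subfamily with $\chi>\alpha$ is supported by an $L(\famF)$- or $R(\famF)$-skeleton, the second alternative of Lemma~\ref{lem:sideways} is applied exactly three times, producing nested families $\famF_1\supseteq\famF_2\supseteq\famF_3$ with $\chi(\famF_3)>4\xi$, each supported from outside by some $\famS_i$. Two of these, say $\famS_i$ and $\famS_j$ with $i<j$, have the same type (both in $R(\cdot)$, say). One then takes the part $\famF_L$ of $\famF_j$ supported from the left, an inclusion-minimal $\famS_L^{\min}\subseteq\famS_L$ still supporting it, the member $s^\star$ with rightmost basepoint, and a curve $c^\star$ meeting $s^\star$ but no other member of $\famS_L^{\min}$; since $c^\star\in\famF_i$, some single $1$-curve $s_i\in\famS_i$ meets $L(c^\star)$, and a short Jordan-curve argument shows every curve of $\famF_L\setminus\famF_L^\star$ meets $s_i$. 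The $\xi$-family property then gives $\chi(\famF_L)\leq 2\xi$, contradicting $\chi(\famF_L)>2\xi$. So the two-sided configuration you try to convert into a skeleton is instead shown to be impossible at large chromatic number; salvaging your approach would require an entirely new idea for the decisive case.
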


\begin{proof}
Let $f(\alpha)=f_1(\alpha,f_1(\alpha,f_1(\alpha,4\xi)))$, where $f_1$ is the function claimed by Lemma~\ref{lem:sideways}.
Let $\famF$ be a $(\xi,h)$-family with $\chi(\famF)>f(\alpha)$.
Suppose for the sake of contradiction that every subfamily of $\famF$ supported by an $L(\famF)$-skeleton or an $R(\famF)$-skeleton has chromatic number at most $\alpha$.
Let $\famF_0=\famF$.
Apply Lemma~\ref{lem:sideways} (and the second conclusion thereof) three times to find families $\famF_1$, $\famF_2$, $\famF_3$, $\famS_1$, $\famS_2$, and $\famS_3$ with the following properties:
\begin{itemize}
\item $\famF=\famF_0\supseteq\famF_1\supseteq\famF_2\supseteq\famF_3$,
\item for $1\leq i\leq 3$, we have $\famS_i\subseteq L(\famF_{i-1})$ or $\famS_i\subseteq R(\famF_{i-1})$, and $\famF_i$ is supported from outside by $\famS_i$.
\item $\chi(\famF_1)>f_1(\alpha,f_1(\alpha,4\xi))$, $\chi(\famF_2)>f_1(\alpha,4\xi)$ and\/ $\chi(\famF_3)>4\xi$.
\end{itemize}
There are indices $i$ and $j$ with $1\leq i<j\leq 3$ such that $\famS_i$ and $\famS_j$ are of the same ``type'': either $\famS_i\subseteq L(\famF_{i-1})$ and $\famS_j\subseteq L(\famF_{j-1})$ or $\famS_i\subseteq R(\famF_{i-1})$ and $\famS_j\subseteq R(\famF_{j-1})$.
Assume for the rest of the proof that $\famS_i\subseteq R(\famF_{i-1})$ and $\famS_j\subseteq R(\famF_{j-1})$; the argument for the other case is analogous.

Let $\famS_L=\{s\in\famS_j\colon s\prec\famF_j\}$, $\famS_R=\{s\in\famS_j\colon\famF_j\prec s\}$, $\famF_L$ be the $2$-curves in $\famF_j$ that intersect some $1$-curve in $\famS_L$, and $\famF_R$ be those that intersect some $1$-curve in $\famS_R$.
Thus $\famF_L\cup\famF_R=\famF_j$.
This and $\chi(\famF_j)\geq\chi(\famF_3)>4\xi$ yield $\chi(\famF_L)>2\xi$ or $\chi(\famF_R)>2\xi$.
Assume for the rest of the proof that $\chi(\famF_L)>2\xi$; the argument for the other case is analogous.

Let $\famS_L^{\min}$ be an inclusion-minimal subfamily of $\famS_L$ subject to the condition that $L(c)$ intersects some $1$-curve in $\famS_L^{\min}$ for every $2$-curve $c\in\famF_L$.
Let $s^\star$ be the $1$-curve in $\famS_L^{\min}$ with rightmost basepoint, and let $\famF_L^\star=\{c\in\famF_L\colon L(c)$ intersects $s^\star\}$.
Since $\famF$ is a $\xi$-family, we have $\chi(\famF_L^\star)\leq\xi$.
By minimality of $\famS_L^{\min}$, the family $\famF_L^\star$ contains a $2$-curve $c^\star$ disjoint from every $1$-curve in $\famS_L^{\min}$ other than $s^\star$.
Since $c^\star\in\famF_j\subseteq\famF_i$ and $\famF_i$ is supported from outside by $\famS_i$, there is a $1$-curve $s_i\in\famS_i$ that intersects $L(c^\star)$.
We show that every $2$-curve in $\famF_L\setminus\famF_L^\star$ intersects $s_i$.

Let $c\in\famF_L\setminus\famF_L^\star$, and let $s$ be a $1$-curve in $\famS_L^{\min}$ that intersects $L(c)$.
We have $s\neq s^\star$, as $c\notin\famF_L^\star$.
There is a cap-curve $\nu\subseteq s\cup L(c)$.
Since $s\prec s^\star\prec L(c)$ and $s^\star$ intersects neither $s$ nor $L(c)$, we have $s^\star\subset\int\nu$.
Since $L(c^\star)$ intersects $s^\star$ but neither $s$ nor $L(c)$, we also have $L(c^\star)\subset\int\nu$.
Since $s\in\famS_j\subseteq R(\famF_i)$ and $s_i\prec\famF_i$ or $\famF_i\prec s_i$, the basepoint of $s_i$ lies in $\ext\nu$.
Since $s_i$ intersects $L(c^\star)$ and $L(c^\star)\subset\int\nu$, the $1$-curve $s_i$ intersects $\nu$ and thus $L(c)$.
This shows that every $2$-curve in $\famF_L\setminus\famF_L^\star$ intersects $s_i$.
This and the assumption that $\famF$ is a $\xi$-family yield $\chi(\famF_L\setminus\famF_L^\star)\leq\xi$.
We conclude that $\chi(\famF_L)\leq\chi(\famF_L^\star)+\chi(\famF_L\setminus\famF_L^\star)\leq 2\xi$, which is a contradiction.
\end{proof}

A \emph{chain} of length $n$ is a sequence $\bigl((a_1,b_1),\ldots,(a_n,b_n)\bigr)$ of pairs of $2$-curves such that
\begin{itemize}
\item for $1\leq i\leq n$, the $1$-curves $R(a_i)$ and $L(b_i)$ intersect,
\item for $2\leq i\leq n$, the basepoints of $R(a_i)$ and $L(b_i)$ lie between the basepoints of $R(a_{i-1})$ and $L(b_{i-1})$, and $L(a_i)$ intersects $R(a_1),\ldots,R(a_{i-1})$ or $R(b_i)$ intersects $L(b_1),\ldots,L(b_{i-1})$.
\end{itemize}

\begin{lemma}[cf.\ {\cite[Lemma~11]{RW}}]
\label{lem:chain}
For every\/ $\xi\in\setN$ and every function\/ $h\colon\setN\to\setN$, there is a function\/ $f\colon\setN\to\setN$ such that for every\/ $n\in\setN$, every\/ $(\xi,h)$-family\/ $\famF$ with\/ $\chi(\famF)>f(n)$ contains a chain of length\/ $n$.
\end{lemma}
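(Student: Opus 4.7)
The plan is to proceed by induction on $n$, defining $f$ recursively in terms of the function supplied by Lemma~\ref{lem:skeleton}. The base case $n=1$ is immediate: as soon as $\chi(\famF)\geq 2$, the $LR$-family $\famF$ contains two intersecting $2$-curves, and they form a chain of length~$1$. For the inductive step, I apply Lemma~\ref{lem:skeleton} to the $(\xi,h)$-family $\famF$ to extract a subfamily $\famG\subseteq\famF$ of still-large chromatic number supported by either an $L(\famF)$-skeleton or an $R(\famF)$-skeleton; by symmetry, assume an $R(\famF)$-skeleton $(\gamma,\famU)$. Every $c\in\famG$ then has $L(c)$ crossing some $u\in\famU$, and each $u$ is a subcurve of $R(a_u)$ for some $a_u\in\famF$.

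To produce the outermost pair $(a_1,b_1)$ of the chain, I run the minimality argument used in the proof of Lemma~\ref{lem:skeleton}. I pick an inclusion-minimal subfamily $\famU^{\min}\subseteq\famU$ such that every $c\in\famG$ still has $L(c)$ crossing some member of $\famU^{\min}$, and then single out the $1$-curve $u^\star\in\famU^{\min}$ with the appropriately extremal basepoint. Setting $a_1=a_{u^\star}$, minimality provides a witness $b_1\in\famG$ whose $L(b_1)$ meets $u^\star\subseteq R(a_1)$ and no other member of $\famU^{\min}$, which gives an outermost pair with $R(a_1)$ crossing $L(b_1)$. The same cap-curve argument appearing at the end of the proof of Lemma~\ref{lem:skeleton} then forces every curve in $\famG$ whose $L$-part meets $\famU^{\min}\setminus\{u^\star\}$ to also cross $R(a_1)$, so almost all of the chromatic mass of $\famG$ already has $L$ meeting $R(a_1)$.

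The next step is to carve out $\famG'\subseteq\famG$ with still-large chromatic number whose curves have both basepoints strictly between those of $R(a_1)$ and $L(b_1)$, while every $c\in\famG'$ has $L(c)$ crossing $R(a_1)$. The nesting of basepoints is where I invoke the $(\xi,h)$-family property of $\famF$, which is tailored precisely to guarantee that a subfamily of sufficiently large chromatic number admits a further large-$\chi$ subfamily whose basepoints lie inside any prescribed interval determined by an outer pair. Applying the induction hypothesis to $\famG'$ produces a chain of length $n-1$, and prepending $(a_1,b_1)$ yields a chain of length $n$.

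The main obstacle is the bookkeeping of the extension condition: the definition requires that for each $i\geq 2$, either $L(a_i)$ crosses \emph{all} previous $R(a_j)$ or $R(b_i)$ crosses \emph{all} previous $L(b_j)$, not just the immediately preceding pair. The cleanest way I see to handle this is to strengthen the inductive statement to a one-sided variant, producing an $a$-chain (in which every extension uses $L(a_i)$ against all previous $R(a_j)$) when the skeleton is an $R(\famF)$-skeleton, and a $b$-chain symmetrically otherwise. The side chosen is dictated by the dichotomy in Lemma~\ref{lem:skeleton}, and is propagated through the recursion. Because $\famG'$ is constructed so that each of its curves already crosses $R(a_1)$, the newly imposed ``meets $R(a_1)$'' condition is automatically inherited by every inductively produced $a_i$, and the condition against the new outermost pair propagates through all subsequent extensions without interfering with the inner chain's own conditions.
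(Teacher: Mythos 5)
The central step of your plan---carving out a subfamily $\famG'\subseteq\famG$ of still-large chromatic number all of whose members have $L(c)$ crossing $R(a_1)$---is impossible in a $\xi$-family: since $a_1\in\famF$, every $2$-curve crossing $R(a_1)$ intersects $a_1$, and by the very definition of a $\xi$-family the curves of $\famF$ intersecting $a_1$ have chromatic number at most $\xi$. This is not a bookkeeping issue: in the paper, the conclusion ``a subfamily of large chromatic number all meets one fixed curve'' is precisely the contradiction that closes the proof of Lemma~\ref{lem:skeleton}, so the minimality argument you borrow is a reductio there, not a device for extracting an outermost pair whose interior retains large chromatic mass crossing $R(a_1)$. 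Relatedly, the $(\xi,h)$-property does not deliver what you ask of it: it only confines the basepoints of curves of $\famF$ meeting $I(\famH)$ to $I(\famG)$; it does not produce a large-chromatic subfamily with basepoints inside a prescribed interval determined by an intersecting pair. In the paper that role is played by Lemma~\ref{lem:between}, which your outline never invokes.

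The second gap is the direction of the recursion together with the proposed one-sided strengthening. Prepending an outermost pair requires every inner pair of $b$-type to have $R(b_i)$ crossing $L(b_1)$, which nothing in your construction provides; and the side cannot be ``propagated through the recursion,'' because at each level Lemma~\ref{lem:skeleton} decides for you whether the skeleton is an $L(\famF)$- or an $R(\famF)$-skeleton, and the inner level may well return the opposite type from the outer one. The paper's proof avoids both problems by growing the chain at the innermost end: it first prepares, via Lemma~\ref{lem:between}, a subfamily $\famH$ such that the family strictly between any two intersecting $1$-curves of $L(\famH)\cup R(\famH)$ retains large chromatic number, finds a chain of length $n$ inside $\famH$ by induction, and only then appends a single new pair $(a_{n+1},b_{n+1})$, extracted from two skeletons of the same type obtained by three applications of Lemma~\ref{lem:skeleton}. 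The requirement that $L(a_{n+1})$ meet all previous $R(a_t)$ is then established by a topological barrier argument (the earlier $R(a_t)$ and $L(b_t)$ lie in $\int\gamma_j$ and intersect, while $L(a_{n+1})$ contains a $1$-curve reaching $\gamma_j$ from a basepoint between them), so it concerns only one new curve rather than a family of large chromatic number, and no clash with the $\xi$-family property arises. As written, your argument does not survive these two obstructions.
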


\begin{proof}[Proof of Lemma~\ref{lem:(xi,h)} from Lemma~\ref{lem:chain}]
Let $\zeta=f(2k+1)$, where $f$ is the function claimed by Lemma~\ref{lem:chain} for $\xi$ and $h$.
Let $\famF$ be a $(\xi,h)$-family with $\chi(\famF)>\zeta$.
By Lemma~\ref{lem:chain}, $\famF$ contains a chain of length $2k+1$.
This chain contains a subchain $\bigl((a_1,b_1),\ldots,(a_{k+1},b_{k+1})\bigr)$ of pairs of the same ``type''---such that $L(a_i)$ intersects $R(a_1),\ldots,R(a_{i-1})$ for $2\leq i\leq k+1$ or $R(b_i)$ intersects $L(b_1),\ldots,L(b_{i-1})$ for $2\leq i\leq k+1$.
This subchain contains a clique $\{a_1,\ldots,a_{k+1}\}$ or $\{b_1,\ldots,b_{k+1}\}$, respectively, which is not possible when $\omega(\famF)\leq k$.
\end{proof}

\begin{proof}[Proof of Lemma~\ref{lem:chain}]
We define the function $f$ by induction.
We set $f(1)=1$; if $\chi(\famF)>1$, then $\famF$ contains two intersecting $2$-curves, which form a chain of length $1$.
For the induction step, fix $n\geq 1$, and assume that $f(n)$ is defined so that every $(\xi,h)$-family $\famH$ with $\chi(\famH)>f(n)$ contains a chain of length $n$.
Let $f_1$ be the function claimed by Lemma~\ref{lem:between} and $f_2$ be the function claimed by Lemma~\ref{lem:skeleton}.
Let
\begin{equation*}
\beta=f_1\bigl(f(n),h(2\xi)+4\xi+2\bigr),\qquad f(n+1)=f_2(f_2(f_2(\beta))).
\end{equation*}
Let $\famF$ be a $(\xi,h)$-family with $\chi(\famF)>f(n+1)$.
We claim that $\famF$ contains a chain of length $n+1$.

Let $\famF_0=\famF$.
Lemma~\ref{lem:skeleton} applied three times provides families of $2$-curves $\famF_1$, $\famF_2$, $\famF_3$ and skeletons $(\gamma_1,\famU_1)$, $(\gamma_2,\famU_2)$, $(\gamma_3,\famU_3)$ with the following properties:
\begin{itemize}
\item $\famF=\famF_0\supseteq\famF_1\supseteq\famF_2\supseteq\famF_3$,
\item for $1\leq i\leq 3$, $(\gamma_i,\famU_i)$ is an $L(\famF_{i-1})$-skeleton or an $R(\famF_{i-1})$-skeleton supporting $\famF_i$,
\item $\chi(\famF_1)>f_2(f_2(\beta))$, $\chi(\famF_2)>f_2(\beta)$, and $\chi(\famF_3)>\beta$.
\end{itemize}
There are indices $i$ and $j$ with $1\leq i<j\leq 3$ such that the skeletons $(\gamma_i,\famU_i)$ and $(\gamma_j,\famU_j)$ are of the same ``type'': either an $L(\famF_{i-1})$-skeleton and an $L(\famF_{j-1})$-skeleton or an $R(\famF_{i-1})$-skeleton and an $R(\famF_{j-1})$-skeleton.
Assume for the rest of the proof that $(\gamma_i,\famU_i)$ is an $L(\famF_{i-1})$-skeleton and $(\gamma_j,\famU_j)$ is an $L(\famF_{j-1})$-skeleton; the argument for the other case is analogous.

By Lemma~\ref{lem:between}, since $\chi(\famF_j)\geq\chi(\famF_3)>\beta$, there is a subfamily $\famH\subseteq\famF_j$ such that $\chi(\famH)>f(n)$ and $\chi(\famF_j(x,y))>h(2\xi)+4\xi+2$ for any two intersecting $1$-curves $x,y\in L(\famH)\cup R(\famH)$.
Since $\chi(\famH)>f(n)$, there is a chain $\bigl((a_1,b_1),\ldots,(a_n,b_n)\bigr)$ of length $n$ in $\famH$.
Let $x$ and $y$ be the $1$-curves $R(a_n)$ and $L(b_n)$ ordered so that $x\prec y$.
Since they intersect, we have $\chi(\famF_j(x,y))>h(2\xi)+4\xi+2$.

Since $\famF_j\subseteq\famF_i$ and $\famF_i$ is supported by $(\gamma_i,\famU_i)$, every $2$-curve in $\famF_j(x,y)$ intersects some $1$-curve in $\famU_i$.
Let $\famG=\{c\in\famF_j(x,y)\colon c$ intersects some $1$-curve in $\famU_i(x,y)\}$.
If a $2$-curve $c\in\famF_j(x,y)$ intersects no $1$-curve in $\famU_i(x,y)$, then $c$ intersects the $1$-curve in $\famU_i$ with rightmost basepoint to the left of the basepoint of $x$ (if such a $1$-curve exists) or the $1$-curve in $\famU_i$ with leftmost basepoint to the right of the basepoint of $y$ (if such a $1$-curve exists).
This and the fact that $\famF$ is a $\xi$-family imply $\chi(\famF_j(x,y)\setminus\famG)\leq 2\xi$ and thus $\chi(\famG)\geq\chi(\famF_j(x,y))-2\xi>h(2\xi)+2\xi+2$.

\begin{figure}[t]
\centering
\begin{tikzpicture}[scale=.9,yscale=.8]
  \useasboundingbox (0,-1.3) rectangle (15,6.7);
  \coordinate (a0) at (0.6,0);
  \coordinate (a1) at (1.6,5);
  \coordinate (a2) at (5.8,6.2);
  \coordinate (a3) at (13,5.6);
  \coordinate (a4) at (14.4,0);
  \coordinate (b0) at (1.6,0);
  \coordinate (b1) at (2.5,3.6);
  \coordinate (b2) at (6,4.6);
  \coordinate (b3) at (12.1,5);
  \coordinate (b4) at (13.4,0);
  \coordinate (u0) at (4.3,0);
  \coordinate (u1) at (4.4,3);
  \coordinate (u2) at (3.8,5.6);
  \coordinate (u3) at (5,7);
  \coordinate (w0) at (10.1,0);
  \coordinate (w1) at (9.9,1.8);
  \coordinate (w2) at (9.5,4);
  \coordinate (w3) at (10.4,5.9);
  \coordinate (w4) at (9.3,7);
  \fill[red!5] plot[smooth,tension=.7] coordinates {(a0) (a1) (a2) (a3) (a4)};
  \fill[blue!10] plot[smooth,tension=.7] coordinates {(b0) (b1) (b2) (b3) (b4)};
  \begin{scope}
  \clip plot[smooth,tension=.7] coordinates {(a0) (a1) (a2) (a3) (a4)}--cycle;
  \fill[opacity=.1] plot[smooth,tension=.7] coordinates {(u0) (u1) (u2) (u3) (w4) (w3) (w2) (w1) (w0)}--cycle;
  \end{scope}
  \draw[name path=a,dotted,red] plot[smooth,tension=.7] coordinates {(a0) (a1) (a2) (a3) (a4)};
  \draw[dotted,blue] plot[smooth,tension=.7] coordinates {(b0) (b1) (b2) (b3) (b4)};
  \path[name path=b] plot[smooth,tension=.7] coordinates {(b0) (b1) (b2) (b3) (b4)}--cycle;
  \path[name path=uw] plot[smooth,tension=.7] coordinates {(u0) (u1) (u2) (u3) (w4) (w3) (w2) (w1) (w0)};
  \draw[red,intersection segments={of=uw and a,sequence=L1}];
  \draw[red,intersection segments={of=uw and a,sequence=L3}];
  \coordinate (v0) at (8.4,0);
  \path[decorate,decoration={markings,mark=at position .52 with {\coordinate (v4);}}] plot[smooth,tension=.7] coordinates {(a0) (a1) (a2) (a3) (a4)};
  \draw[red] plot[smooth,tension=.7] coordinates {(v0) (8.7,1.6) (8.3,3.6) (8.6,5.4) (v4)};
  \path[decorate,decoration={markings,mark=at position .35 with {\coordinate (b1');},mark=at position .49 with {\coordinate (b2');}}] plot[smooth,tension=.7] coordinates {(b0) (b1) (b2) (b3) (b4)};
  \draw[blue] plot[smooth,tension=.7] coordinates {(5.2,0) (5.2,1.8) (5.4,3.6) (b1')};
  \draw[blue] plot[smooth,tension=.7] coordinates {(7.7,0) (7.3,2.7) (b2')};
  \draw plot[smooth,tension=.7] coordinates {(2.6,0) (3.6,2.9) (8.1,3.5) (10.8,3.9) (11.8,4.15)};
  \coordinate (x0) at (3.5,0);
  \draw plot[smooth,tension=.7] coordinates {(x0) (4.4,2.2) (8.6,2.7) (10.6,2.9) (11.3,2.2) (10.6,1.4)};
  \draw plot[smooth,tension=.7] coordinates {(6,0) (6,1) (6.85,1.4)};
  \path[name path=y] plot[smooth,tension=.7] coordinates {(12.8,1.2) (11.9,0.6) (11.8,-0.5) (11.15,-0.7) (10.9,0) (11.3,1.3) (10.6,2.2)};
  \draw[dashed,intersection segments={of=y and b,sequence=L1--L2}];
  \draw[intersection segments={of=y and b,sequence=L3}];
  \draw plot[smooth,tension=.7] coordinates {(12.5,0) (12.3,1.6) (11.95,2.75) (11.3,3.7) (11,4.7)};
  \path[name path=c] plot[smooth,tension=.77] coordinates {(8,1.5) (9.1,0.8305) (9.05,-0.65) (7.45,-0.8) (6.5,0.9) (6.3,3.6) (6.9,5.6)};
  \draw[dashed,intersection segments={of=c and b,sequence=L1--L2}];
  \draw[blue,intersection segments={of=c and b,sequence=L3}];
  \draw[dashed,intersection segments={of=c and b,sequence=L4}];
  \draw (0,0)--(15,0);
  \node at (4.9,5.4) {$K$};
  \node[below] at (x0) {$x$};
  \node[below] at (u0) {$u_L$};
  \node[below] at (v0) {$u$};
  \node[below] at (w0) {$u_R$};
  \draw[decorate,decoration={brace,mirror,amplitude=5pt,raise=3pt}] (5.5,0)--(7.4,0) node[midway,below=8pt] {$I(\famH')$};
  \path[decorate,decoration={markings,mark=at position .54 with {\node[above] {$c^\star$};}},intersection segments={of=c and b,sequence=L2}];
  \path[decorate,decoration={markings,mark=at position .13 with {\node[right] {$u^\star$};}},intersection segments={of=c and b,sequence=L3}];
  \path[decorate,decoration={markings,mark=at position .33 with {\node[left] {$y$};}},intersection segments={of=y and b,sequence=L3}];
  \path[decorate,decoration={markings,mark=at position .8 with {\node[above right] {$\gamma_i$};}}] plot[smooth,tension=.7] coordinates {(a0) (a1) (a2) (a3) (a4)};
  \path[decorate,decoration={markings,mark=at position .8 with {\node[above right,xshift=-1pt] {$\gamma_j$};}}] plot[smooth,tension=.7] coordinates {(b0) (b1) (b2) (b3) (b4)};
\end{tikzpicture}
\caption{Illustration for the proof of Lemma~\ref{lem:chain}}
\label{fig:final-proof}
\end{figure}
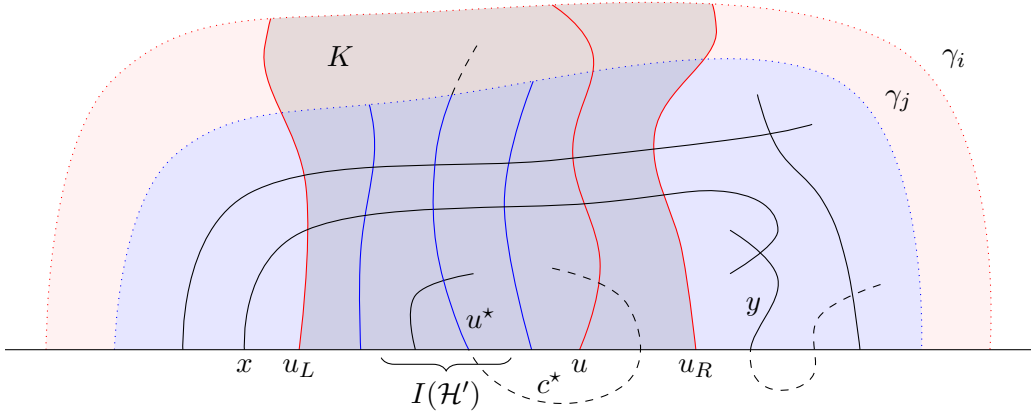

The rest of the argument is illustrated in Figure~\ref{fig:final-proof}.
Let $u_L$ and $u_R$ be the curves in $\famU_i(x,y)$ with leftmost and rightmost basepoints, respectively.
Every $1$-curve in $\famU_i(x,y)$ lies in the closed region $K$ bounded by $u_L$, $u_R$, the segment of the baseline between the basepoints of $u_L$ and $u_R$, and the part of $\gamma_i$ between its intersection points with $u_L$ and $u_R$.
Since $\famF$ is a $\xi$-family, the $2$-curves in $\famG$ intersecting $u_L$ or $u_R$ have chromatic number at most $2\xi$.
Every other $2$-curve $c\in\famG$ satisfies $L(c)\subset K$ or $R(c)\subset K$.
Those for which $L(c)\subset K$ but $R(c)\not\subset K$ satisfy $R(c)\cap K=\emptyset$ and therefore are disjoint from each other.
Similarly, those for which $R(c)\subset K$ but $L(c)\not\subset K$ are disjoint from each other.
Let $\famG'=\{c\in\famG\colon L(c)\subset K$ and $R(c)\subset K\}$.
It follows that $\chi(\famG\setminus\famG')\leq 2\xi+2$ and thus $\chi(\famG')\geq\chi(\famG)-2\xi-2>h(2\xi)$.

Since $\famF$ is a $(\xi,h)$-family, there is a subfamily $\famH'\subseteq\famG'$ with $\chi(\famH')>2\xi$ such that every $2$-curve $c\in\famF$ with a basepoint on $I(\famH')$ satisfies $u_L\prec c\prec u_R$.
Since $\famH'\subseteq\famF_j$ and $\famF_j$ is supported by $(\gamma_j,\famU_j)$, every $2$-curve in $\famH'$ intersects some $1$-curve in $\famU_j$.
If a $2$-curve $c\in\famH'$ intersects no $1$-curve in $\famU_j(I(\famH'))$, then $c$ intersects the $1$-curve in $\famU_j$ with rightmost basepoint to the left of $I(\famH')$ (if such a $1$-curve exists) or the $1$-curve in $\famU_j$ with leftmost basepoint to the right of $I(\famH')$ (if such a $1$-curve exists).
Since $\famF$ is a $\xi$-family, the $2$-curves in $\famH'$ intersecting at least one of these two $1$-curves have chromatic number at most $2\xi$.
Therefore, since $\chi(\famH')>2\xi$, some $2$-curve in $\famH'$ intersects a $1$-curve in $\famU_j(I(\famH'))$.
In particular, the family $\famU_j(I(\famH'))$ is non-empty.

Let $u^\star\in\famU_j(I(\famH'))$.
The $1$-curve $u^\star$ is a subcurve of $L(c^\star)$ for some $2$-curve $c^\star\in\famF_{j-1}$.
The fact that the basepoint of $L(c^\star)$ lies on $I(\famH')$ and the property of $\famH'$ imply $u_L\prec c^\star\prec u_R$.
Since $c^\star\in\famF_{j-1}\subseteq\famF_i$ and $\famF_i$ is supported by $(\gamma_i,\famU_i)$, the $1$-curve $R(c^\star)$ intersects a $1$-curve $u\in\famU_i$, which can be chosen so that $u_L\preceq u\preceq u_R$, because $c^\star\subset\int\gamma_i$ and both basepoints of $c^\star$ lie in $K$.
Let $a_{n+1}=c^\star$ and $b_{n+1}$ be the $2$-curve in $\famF_{i-1}$ such that $u$ is a subcurve of $L(b_{n+1})$.
Thus $x\prec\{R(a_{n+1}),L(b_{n+1})\}\prec y$.
For $1\leq t\leq n$, the facts that the $1$-curves $R(a_t)$ and $L(b_t)$ intersect, they are both contained in $\int\gamma_j$ (as $a_t,b_t\in\famF_j$), the basepoint of $u^\star$ lies between the basepoints of $R(a_t)$ and $L(b_t)$, and $u^\star$ intersects $\gamma_j$ imply that $u^\star$ and therefore $L(a_{n+1})$ intersects $R(a_t)$.
We conclude that $\bigl((a_1,b_1),\ldots,(a_{n+1},b_{n+1})\bigr)$ is a chain of length $n+1$.
\end{proof}

\section{Proof of Theorem~\ref{thm:k-quasi-planar}}
\label{sec:k-quasi-planar}

\begin{lemma}[Fox, Pach, Suk {\cite[Lemma 3.2]{FPS13}}]
\label{lem:decomposable}
For every\/ $t\in\setN$, there is a constant\/ $\nu_t>0$ such that every family of curves\/ $\famF$ any two of which intersect in at most\/ $t$ points has subfamilies\/ $\famF_1,\ldots,\famF_d\subseteq\famF$ (where\/ $d$ is arbitrary) with the following properties:
\begin{itemize}
\item for\/ $1\leq i\leq d$, there is a curve\/ $c_i\in\famF_i$ intersecting all curves in\/ $\famF_i\setminus\{c_i\}$,
\item for\/ $1\leq i<j\leq d$, every curve in\/ $\famF_i$ is disjoint from every curve in\/ $\famF_j$,
\item $\lvert\famF_1\cup\cdots\cup\famF_d\rvert\geq\nu_t{\lvert\famF\rvert}/\log{\lvert\famF\rvert}$.
\end{itemize}
\end{lemma}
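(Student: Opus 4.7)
The plan is to prove Lemma~\ref{lem:decomposable} by recursively applying a balanced-separator theorem for string graphs and extracting star-shaped subfamilies at the leaves of the recursion. The key ingredient is a balanced-separator theorem for intersection graphs of curves with pairwise at most $t$ crossings: the intersection graph $G(\famF)$ of an $n$-curve family admits a balanced separator $S\subseteq\famF$ of sublinear size whose removal partitions $\famF\setminus S$ into $\famF_A\sqcup\famF_B$ with $\lvert\famF_A\rvert,\lvert\famF_B\rvert\leq 2n/3$ and no intersections between any $c\in\famF_A$ and any $c'\in\famF_B$. Such a theorem follows from Matou\v{s}ek-style separator theorems for string graphs, with the dependence on $t$ absorbed into the constant~$\nu_t$.

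I would then set up a recursion tree: at every internal node representing a current subfamily $\famF'$, apply the separator theorem, discard $S$, and recurse on $\famF'_A$ and $\famF'_B$. The recursion terminates at pieces of size below a suitable threshold, at which point each remaining curve becomes its own singleton subfamily (trivially a star centered at itself). Different leaves lie in different recursion branches, separated by discarded ancestor separators, so any two subfamilies produced from distinct leaves share no intersecting pairs---the disjointness condition of Lemma~\ref{lem:decomposable} is then automatic, since at any level the two pieces $\famF'_A$ and $\famF'_B$ are non-interacting, and this property is inherited down the tree.

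The quantitative bound is where the main work lies. A direct recursion that simply discards the separator at every level is too wasteful, because a naive summation of separator sizes over the $O(\log n)$ levels of the recursion can exceed $n$. The main obstacle, therefore, is to assign most separator curves to stars instead of discarding them. I would try to pair each separator curve $c\in S$ with a star $\famF_i$ at a descendant leaf, in which $c$ plays the role of the center $c_i$: this exploits the fact that $c$ has many crossings with curves on one of the two sides $\famF'_A$ or $\famF'_B$, since otherwise $c$ would not be needed in the separator. With this more careful accounting, one expects the fraction of $\famF$ placed into the output stars to reach $\Omega(1/\log n)$, yielding the desired lower bound $\nu_t n/\log n$ on $\lvert\famF_1\cup\cdots\cup\famF_d\rvert$. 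The technical difficulty I anticipate is verifying that the stars so assembled remain pairwise disjoint across different leaves and different recursion levels, and that the pairwise crossing bound $t$ enters the separator size in a way tight enough to support the $\log n$ loss rather than a larger polylog or polynomial loss.
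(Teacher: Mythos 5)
The paper itself does not prove Lemma~\ref{lem:decomposable}: it is imported verbatim from Fox, Pach, and Suk \cite{FPS13} and used as a black box, so the only question is whether your sketch stands on its own, and it does not. The foundational claim---that the intersection graph of \emph{every} family of curves pairwise crossing at most $t$ times admits a balanced separator of sublinear size---is false. Separator theorems for string graphs bound the separator in terms of the number of \emph{edges} $m$ (e.g.\ $O(\sqrt{m}\log m)$, see \cite{Mat14,FP14}), not the number of vertices, and dense families genuinely have no small separators: already for $t=1$, a family of $n$ pairwise crossing segments has intersection graph $K_n$, whose balanced separators have size at least $n/3$. So your recursion cannot even start on dense pieces, and any argument of this shape needs a dense/sparse dichotomy in which the dense regime is handled by exhibiting a large star directly; this is precisely where the hypothesis of at most $t$ crossings must be exploited (via bi-clique/sparsity structure theorems special to such curve families, which is how \cite{FPS13} proceed), whereas in your sketch $t$ never enters in any quantitative way. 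Note also that even with a dichotomy the naive accounting misses the target: to make $\sqrt{m}\log m=o(n)$ one needs $m=o(n^2/\log^2 n)$, and at that density a maximum-degree star has size only about $n/\log^2 n$, short of the required $n/\log n$.

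Two further concrete defects. First, your leaf rule---declaring every curve of a small terminal piece its own singleton subfamily---violates the second condition of the lemma: two curves lying in the same leaf may cross, and then the two singleton subfamilies $\famF_i,\famF_j$ are not disjoint from each other; being in the same leaf does not exempt them, since the disjointness requirement applies to \emph{all} pairs $i<j$. Singletons are only usable when the leaf is an independent set, so within each leaf you must still extract a star or an independent set and account for what is discarded. Second, the step that is supposed to rescue the counting---recycling separator curves as centers of stars at descendant leaves ``since otherwise $c$ would not be needed in the separator''---has no justification: separator vertices need not have many neighbours on either side, and even when they do, ensuring that the assembled stars are pairwise non-adjacent across different leaves and levels (a separator curve can meet curves of many leaves as well as other separator curves) is exactly the difficulty you defer, and no computation is given that actually reaches $\Omega(n/\log n)$. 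As written, the proposal is a plan in the right general spirit (recursive decomposition) with the two decisive ingredients---the dense case and the assignment/accounting---missing.
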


\begin{proof}[Proof of Theorem~\ref{thm:k-quasi-planar}]
Let $\famF$ be a family of curves obtained from the edges of $G$ by shortening them slightly so that they do not intersect at the endpoints but all other intersection points are preserved.
If follows that $\omega(\famF)\leq k-1$ (as $G$ is $k$-quasi-planar) and any two curves in $\famF$ intersect in at most $t$ points.
Let $\nu_t$, $\famF_1,\ldots,\famF_d$, and $c_1,\ldots,c_d$ be as claimed by Lemma~\ref{lem:decomposable}.
For $1\leq i\leq d$, since $\omega(\famF_i\setminus\{c_i\})\leq\omega(\famF)-1\leq k-2$, Theorem~\ref{thm:curves} yields $\chi(\famF_i\setminus\{c_i\})\leq f_t(k-2)$.
Thus $\chi(\famF_1\cup\cdots\cup\famF_d)\leq f_t(k-2)+1$.
For every color class $\famC$ in a proper coloring of $\famF_1\cup\cdots\cup\famF_d$ with $f_t(k-2)+1$ colors, the vertices of $G$ and the curves in $\famC$ form a planar topological graph, and thus $\lvert\famC\rvert<3n$.
Thus $\lvert\famF_1\cup\cdots\cup\famF_d\rvert<3(f_t(k-2)+1)n$.
This, the third property in Lemma~\ref{lem:decomposable}, and the fact that $\lvert\famF\rvert<n^2$ yield $\lvert\famF\rvert<3\nu_t^{-1}(f_t(k-2)+1)n\log{\lvert\famF\rvert}<6\nu_t^{-1}(f_t(k-2)+1)n\log n$.
\end{proof}

\section{Proof of Theorem~\ref{thm:construction}}
\label{sec:construction}

\begin{proof}[Proof of Theorem~\ref{thm:construction}]
A \emph{probe} is a section of $H^+$ bounded by two vertical rays starting at the baseline.
We use induction to construct, for every positive integer $k$, an $LR$-family $\famX_k$ of double-curves and a family $\famP_k$ of pairwise disjoint probes with the following properties:
\begin{enumerate}
\item\label{item:shape} every probe in $\famP_k$ is disjoint from $L(X)$ for every double-curve $X\in\famX_k$,
\item\label{item:probe} for every probe $P\in\famP_k$, the double-curves in $\famX_k$ intersecting $P$ are pairwise disjoint,
\item\label{item:omega} $\famX_k$ is triangle-free, that is, $\omega(\famX_k)\leq 2$,
\item\label{item:color} for every proper coloring of $\famX_k$, there is a probe $P\in\famP_k$ such that at least $k$ distinct colors are used on the double-curves in $\famX_k$ intersecting $P$.
\end{enumerate}
This is enough for the proof of theorem, because the last property implies $\chi(\famX_k)\geq k$.
For a pair $(\famX_k,\famP_k)$ satisfying the conditions above and a probe $P\in\famP_k$, let $\famX_k(P)$ denote the set of double-curves in $\famX_k$ intersecting $P$.

For the base case $k=1$, we let $\famX_1=\{X\}$ and $\famP_1=\{P\}$, where $X$ and $P$ look as follows:

\begin{figure}[H]
\centering
\begin{tikzpicture}
  \useasboundingbox (0,-0.55) rectangle (5.2,2.1);
  \fill[black!15] (3.5,0) rectangle (4.5,2.1);
  \node at (4,0.8) {$P$};
  \draw (0,0)--(5.2,0);
  \draw (3.5,0)--(3.5,2.1) (4.5,0)--(4.5,2.1);
  \draw[thick] (1,0) node[below] {$L(X)$}--(1,1.6);
  \draw[thick] (2.8,0) node[below] {$R(X)$}--(2.8,1.3) arc (180:90:3mm)--(5,1.6);
\end{tikzpicture}
\end{figure}

\noindent
It is clear that the conditions \ref{item:shape}--\ref{item:color} are satisfied.

For the induction step, we assume $k\geq 1$ and construct the pair $(\famX_{k+1},\famP_{k+1})$ from $(\famX_k,\famP_k)$.
Let $(\famX,\famP)$ be a copy of $(\famX_k,\famP_k)$.
For every probe $P\in\famP$, put another copy $(\famX^P,\famP^P)$ of $(\famX_k,\famP_k)$ inside $P$ below the intersections of $P$ with the double-curves in $\famX(P)$.
Then, for every probe $P\in\famP$ and every probe $Q\in\famP^P$, let a double-curve $X^P_Q$ and probes $A^P_Q$ and $B^P_Q$ look as follows:

\begin{figure}[H]
\centering
\begin{tikzpicture}
  \fill[black!15] (4.6,0) rectangle (5.6,4.3);
  \fill[black!15] (9.7,0) rectangle (10.7,4.3);
  \node at (5.1,3.9) {$A^P_Q$};
  \node at (10.2,3.9) {$B^P_Q$};
  \draw (-0.7,0)--(12.4,0);
  \draw[thick] (0,0)--(0,2.8) arc (180:90:6mm)--(12.2,3.4);
  \draw[thick] (0.2,0) node[below] {$\famX(P)$}--(0.2,2.8) arc (180:90:4mm)--(12.2,3.2);
  \draw[thick] (0.4,0)--(0.4,2.8) arc (180:90:2mm)--(12.2,3);
  \draw (1.6,0)--(1.6,5.2) (11.7,0)--(11.7,5.2);
  \draw[decorate,decoration={brace,amplitude=5pt,raise=3pt}] (1.6,5.2)--(11.7,5.2) node[midway,above=8pt] {$P$};
  \draw[dashed] (2.3,0)--(2.3,2.5)--(3.05,2.5) (8,0)--(8,2.5)--(3.05,2.5) node[fill=white] {$\famX^P$};
  \draw[thick] (3,0)--(3,1.4) arc (180:90:6mm)--(7.5,2);
  \draw[thick] (3.2,0) node[below] {$\famX^P(Q)$}--(3.2,1.4) arc (180:90:4mm)--(7.5,1.8);
  \draw[thick] (3.4,0)--(3.4,1.4) arc (180:90:2mm)--(7.5,1.6);
  \draw (4,0)--(4,4.4) (7,0)--(7,4.4);
  \draw[decorate,decoration={brace,amplitude=5pt,raise=3pt}] (4,4.4)--(7,4.4) node[midway,above=8pt] {$Q$};
  \draw[thick] (6.3,0) node[below] {$L(X^P_Q)$}--(6.3,2.3);
  \draw (4.6,0)--(4.6,4.3) (5.6,0)--(5.6,4.3);
  \draw[thick] (9,0) node[below] {$R(X^P_Q)$}--(9,1.7) arc (180:90:3mm)--(11.2,2);
  \draw (9.7,0)--(9.7,4.3) (10.7,0)--(10.7,4.3);
\end{tikzpicture}
\end{figure}

\noindent
In particular, $X^P_Q$ intersects the double-curves in $\famX^P(Q)$, $A^P_Q$ intersects the double-curves in $\famX(P)\cup\famX^P(Q)$, and $B^P_Q$ intersects the double-curves in $\famX(P)\cup\{X^P_Q\}$.
Let
\begin{equation*}
\famX_{k+1}=\famX\cup\bigcup_{P\in\famP}\famX^P\cup\bigcup_{P\in\famP}\bigl\{X^P_Q\colon Q\in\famP^P\bigr\},\quad\famP_{k+1}=\bigcup_{P\in\famP}\bigl\{A^P_Q,B^P_Q\colon Q\in\famP^P\bigr\}.
\end{equation*}
The conditions \ref{item:shape} and \ref{item:probe} clearly hold for $(\famX_{k+1},\famP_{k+1})$, and \ref{item:probe} for $(\famX_k,\famP_k)$ implies \ref{item:omega} for $(\famX_{k+1},\famP_{k+1})$.
To see that \ref{item:color} holds for $(\famX_{k+1},\famP_{k+1})$ and $k+1$, consider a proper coloring $\phi$ of $\famX_{k+1}$.
Let $\phi(X)$ denote the color of a double-curve $X\in\famX_{k+1}$ and $\phi(\famY)$ denote the set of colors used on a subset $\famY\subseteq\famX_{k+1}$.
By \ref{item:color} applied to $(\famX,\famP)$, there is a probe $P\in\famP$ such that $\lvert\phi(\famX(P))\rvert\geq k$.
By \ref{item:color} applied to $(\famX^P,\famP^P)$, there is a probe $Q\in\famP^P$ such that $\lvert\phi(\famX^P(Q))\rvert\geq k$.
Since $X^P_Q$ intersects the double-curves in $\famX^P(Q)$, we have $\phi(X^P_Q)\notin\phi(\famX^P(Q))$.
If $\phi(\famX(P))\neq\phi(\famX^P(Q))$, then $\famX_{k+1}(A^P_Q)=\famX(P)\cup\famX^P(Q)$ yields $\lvert\phi(\famX_{k+1}(A^P_Q))\rvert=\lvert\phi(\famX(P))\cup\phi(\famX^P(Q))\rvert\geq k+1$.
If $\phi(\famX(P))=\phi(\famX^P(Q))$, then $\famX_{k+1}(B^P_Q)=\famX(P)\cup\{X^P_Q\}$ and $\phi(X^P_Q)\notin\phi(\famX(P))$ yield $\lvert\phi(\famX_{k+1}(B^P_Q))\rvert=\lvert\phi(\famX(P))+1\rvert\geq k+1$.
This shows that \ref{item:color} holds for $(\famX_{k+1},\famP_{k+1})$ and $k+1$.
\end{proof}

\end{document}